\theoremstyle{plain}
  \newtheorem{thm}{Theorem}[section]
  \newtheorem{lem}[thm]{Lemma}
  \newtheorem{prop}[thm]{Proposition}
  \newtheorem{cor}[thm]{Corollary}
\theoremstyle{definition}
  \newtheorem{defn}[thm]{Definition}
  \newtheorem{exmp}[thm]{Example}
  \newtheorem{rem}[thm]{Remark}
\newtheorem*{con}{Convention}
\newtheorem*{SA}{Standing Assumption}
\def\ps@pprintTitle{  \let\@oddhead\@empty  \let\@evenhead\@empty  \def\@oddfoot{\centerline{\thepage}} \let\@evenfoot\@oddfoot} \makeatother
\begin{document}

\newcommand{\oto}{{\lra\hspace*{-3.1ex}{\circ}\hspace*{1.9ex}}}

\newcommand{\lam}{\lambda}
\newcommand{\da}{\downarrow}
\newcommand{\Da}{\Downarrow\!}
\newcommand{\ua}{\uparrow}
\newcommand{\ra}{\rightarrow}
\newcommand{\la}{\leftarrow}
\newcommand{\lra}{\longrightarrow}
\newcommand{\lla}{\longleftarrow}
\newcommand{\rat}{\!\rightarrowtail\!}
\newcommand{\up}{\upsilon}
\newcommand{\Up}{\Upsilon}
\newcommand{\Lam}{\Lambda}
\newcommand{\CF}{{\cal F}}
\newcommand{\CG}{{\cal G}}
\newcommand{\CH}{{\cal H}}
\newcommand{\CN}{{\mathcal{N}}}
\newcommand{\CB}{{\cal B}}
\newcommand{\CI}{{\cal I}}
\newcommand{\CT}{{\cal T}}
\newcommand{\CS}{{\cal S}}
\newcommand{\CV}{\mathfrak{V}}
\newcommand{\CU}{\mathfrak{U}}
\newcommand{\CP}{{\cal P}}
\newcommand{\CQ}{\mathcal{Q}}
\newcommand{\nb}{{\rm int}}
\newcommand{\bv}{\bigvee}
\newcommand{\bw}{\bigwedge}
\newcommand{\dda}{\downdownarrows}
\newcommand{\dia}{\diamondsuit}
\newcommand{\y}{{\bf y}}
\newcommand{\colim}{{\rm colim}}
\newcommand{\fR}{R^{\!\forall}}
\newcommand{\eR}{R_{\!\exists}}
\newcommand{\dR}{R^{\!\da}}
\newcommand{\uR}{R_{\!\ua}}
\newcommand{\swa}{{\swarrow}}
\newcommand{\sea}{{\searrow}}
\newcommand{\id}{{\rm id}}
\newcommand{\cl}{{\rm cl}}
\newcommand{\sub}{{\rm sub}}

\numberwithin{equation}{section}
\renewcommand{\theequation}{\thesection.\arabic{equation}}

\begin{frontmatter}
\title{A comparative study of ideals in fuzzy orders\tnoteref{F}} \tnotetext[F]{This work is supported by National Natural Science Foundation of China (No.  11771310)}

\author{Hongliang Lai}
\ead{hllai@scu.edu.cn}

\author{Dexue Zhang\corref{cor}}\cortext[cor]{Corresponding author.}
\ead{dxzhang@scu.edu.cn}

\author{Gao Zhang}
\ead{gaozhang0810@hotmail.com}
\address{School of Mathematics, Sichuan University, Chengdu 610064, China}

\begin{abstract}
This paper presents a comparative study of three kinds  of ideals in fuzzy order theory: forward Cauchy ideals (generated by forward Cauchy nets), flat ideals  and irreducible ideals, including their role in  connecting  fuzzy order with fuzzy topology.
\end{abstract}

\begin{keyword}
 Fuzzy order \sep Fuzzy topology   \sep Forward Cauchy ideal \sep Flat ideal \sep Irreducible ideal   \sep    Scott $\CQ$-topology \sep  Scott $\CQ$-cotopology
\end{keyword}

\end{frontmatter}

\section{Introduction}
The notion of ideals  (i.e., directed lower sets) in   ordered sets  is  primitive  in domain theory. Domains  and Scott topology are both postulated in terms of ideals and their suprema. For a partially ordered set $P$, let ${\rm Idl}(P)$ denote the set of ideals in $P$ with the inclusion order, and $\y:P\lra {\rm Idl}(P)$ be the map that assigns each $x\in P$ to the principal ideal $\da\!x$. Then $P$ is directed complete if $\y$ has a left adjoint $\sup:{\rm Idl}(P)\lra P$  (which sends each ideal to its supremum); $P$ is a domain if it is directed complete and the left adjoint of $\y$  also has a left adjoint.  A Scott open set of $P$ is an upper set $U$ such that for each ideal $I$ in $P$, if the supremum of $I$ is in $U$ then $I$ intersects with $U$.

In order to establish a theory of fuzzy domains (or, quantitative domains), the first step is to find an appropriate notion of ideals for fuzzy orders (or, $\CQ$-orders, where $\CQ$ is the truth-value quantale). The problem seems simple, but, it turns out to be a very intricate one because of  the complication of the table of truth-values --- the quantale $\CQ$. In fact,  there are several natural extension of this notion to the fuzzy setting.  This paper presents a comparative study of three kinds of them: forward Cauchy ideals, flat ideals and irreducible ideals.

Before summarizing related attempts in the literature and explaining what we will do in this paper, we recall some equivalent reformulations of ideals in a partially ordered set. Let $P$ be a partially ordered set. A net $\{x_i\}$ in  $P$ is  eventually monotone  if there is some $i$ such that $x_j\leq x_k$ whenever $i\leq j\leq k$ \cite{Gierz2003}.
Let $I$ be a non-empty lower set in $P$. The following are equivalent: \begin{itemize} \setlength{\itemsep}{-2pt} \item $I$ is an ideal, that is, for any $x,y$ in $I$, there is some $z\in I$ such that $x,y\leq z$. \item There exists an eventually monotone net $\{x_i\}$ such that $I= \bigcup_i\bigcap_{j\geq i}\downarrow\!x_j$. \item $I$ is  \emph{flat}  in the sense that for any upper sets $G,H$ of $P$, if $I$ intersects with both $G$ and $H$, then $I$ intersects with $G\cap H$. \item $I$ is  \emph{irreducible}  in the sense that for any lower sets $B,C$ of $P$, if $I\subseteq B\cup C$ then either $I\subseteq B$ or $I\subseteq C$. \end{itemize}

The net-approach is extended to fuzzy orders in \cite{BvBR1998,Wagner94,Wagner97}, resulting in the notions of forward Cauchy net and Yoneda completeness (a.k.a liminf completeness). Fuzzy lower sets generated by forward Cauchy nets are called ideals in \cite{FK97,FS02}. They will be called forward Cauchy ideals in this paper, in order to distinguish them from flat ideals and irreducible ideals. Yoneda completeness, as a version of quantitative directed completeness, has received wide attention in the study of fuzzy orders, including generalized metric spaces as a special case, see e.g. \cite{BvBR1998,FK97,FS02,FSW,Goubault,HW2011,HW2012, KS2002,LZ16,Ru,Wagner97}.

The extension of the flat-approach to the fuzzy setting originates in the work of Vickers \cite{SV2005} in the case the truth-value quantale is Lawvere's quantale $([0,\infty]^{\rm op},+)$ (which is isomorphic to the unit interval with the product t-norm). This approach results in the notions of flat ideal  (called flat left module  in \cite{SV2005}) and flat completeness of fuzzy orders. It is shown in \cite{SV2005} that for Lawvere's quantale,  flat completeness is equivalent to Yoneda completeness.

The recent paper \cite{Zhang18} extends the irreducible-approach to the fuzzy setting in the study of sobriety of fuzzy cotopological spaces, resulting in the notions of irreducible ideal  and irreducible completeness of fuzzy orders. 

Forward Cauchy ideals, flat ideals and irreducible ideals in a fuzzy ordered set are all natural generalizations of the notion of ideals in a partially ordered set; the resulting completeness for fuzzy orders are  natural extensions of directed completeness in order theory.  We note in passing that, from a category theory perspective, such completeness for fuzzy orders is an example of the theory of  cocompleteness in enriched category theory with respect to a class of weights \cite{AK,Kelly,KS05}.

This paper aims to present a comparative study of forward Cauchy ideals, flat ideals and irreducible ideals, hence of the resulting completeness notions. Since all of them are intended to play the role of directed lower sets in fuzzy order theory, before comparing them with each other, we propose the following  criteria for a class $\Phi$ of fuzzy sets  that are meant for the role of ideals in fuzzy orders:
\begin{enumerate}
\item[(I1)] If the truth-value quantale $\CQ$ is the two-element Boolean algebra, then for each partially ordered set $A$, $\Phi(A)$ is the set of ideals in $A$. This is to require that $\Phi$ is a generalization of the class of ideals.

\item[(I2)] $\Phi$ is saturated. Saturatedness of $\Phi$ guarantees that for each $\CQ$-ordered set $A$, $\Phi(A)$ is the free $\Phi$-continuous $\CQ$-ordered set generated by $A$. So, for a saturated class $\Phi$ of fuzzy sets, there exist enough $\Phi$-continuous $\CQ$-ordered sets.

\item[(I3)] $\Phi$ generates a  functor from the category of $\CQ$-ordered sets and $\Phi$-cocontinuous maps to that of $\CQ$-topological spaces and/or    $\CQ$-cotopological spaces.  This functor is expected to play the role of the functor in domain theory that sends each partially ordered set to its Scott topology. 
    As in the classical case, such  functors are of fundamental importance in the theory of fuzzy domains. \end{enumerate}

Besides the interrelationship among the classes of forward Cauchy ideals, flat ideals and   irreducible ideals, their connection to fuzzy topology will also be discussed.

The contents are arranged as follows.

Section 2 recalls some basic ideas that are needed in the subsequent sections.

Section 3 concerns the relationship among forward Cauchy ideals, flat ideals and  irreducible ideals. The main results are: (i) Every forward Cauchy ideal is flat (irreducible, resp.) if and only if the truth-value quantale is meet continuous (dually meet continuous, resp.).   (ii) For the quantale obtained by equipping $[0,1]$   with a left continuous t-norm,   irreducible ideals coincide with  forward Cauchy ideals. (iii) For a prelinear quantale, every irreducible ideal  is  flat. (iv) For a quantale that satisfies the law of double negation, flat ideals coincide with irreducible ideals.

Section 4 proves that for every quantale, both the class of flat ideals and that of irreducible ideals are saturated. As for forward Cauchy ideals, it is shown in \cite{FSW} that for a completely distributive value quantale (see \cite{FS02,FSW}  for definition), the class of forward Cauchy ideals is saturated. The conclusion is extended in \cite{LZ07} to the case that $\CQ$ is a continuous and integral quantale.

Section 5 concerns the connection between fuzzy orders and fuzzy topological spaces. For each subclass $\Phi$ of flat ideals, a functor is constructed from the category of $\CQ$-ordered sets and $\Phi$-cocontinuous maps to that of stratified $\CQ$-topological spaces.  For each subclass $\Phi$ of irreducible ideals, a full  functor is constructed from the category of $\CQ$-ordered sets and $\Phi$-cocontinuous maps to that of stratified $\CQ$-cotopological spaces. This shows that irreducible ideals can be used to generate closed sets, hence $\CQ$-cotopologies, whereas flat ideals can be used to generate open sets, hence $\CQ$-topologies. We would like to remind the reader that, in general, there is no natural way to switch between closed sets and open sets in the fuzzy setting. This lack of ``duality"  between closed sets and open sets demands that we need different kinds of fuzzy ideals to connect fuzzy orders with fuzzy topological spaces and/or fuzzy cotopological spaces. This is the \emph{raison d'etre} for flat ideals and irreducible ideals.

\section{Preliminaries}
In this preliminary section, we recall briefly some basic ideas of complete lattices \cite{Gierz2003}, quantales \cite{Rosenthal1990}, and $\CQ$-orders that will be needed.

A quantale $\mathcal{Q}$ is a monoid in the monoidal category of complete lattices and join-preserving maps \cite{Rosenthal1990}. Explicitly, a quantale $\mathcal{Q}$ is a monoid $(Q,\&)$ such that $Q$ is a complete lattice and
\begin{equation*} p\&\bv_{j\in J}q_j=\bv_{j\in J}p\& q_j, ~ \Big(\bv_{j\in J}q_j\Big)\&p=\bv_{j\in J}q_j\&p. \end{equation*}
for all $p\in Q$ and   $\{q_j\}_{j\in J}\subseteq Q$. The unit $1$ of the monoid $(Q,\&)$ is in general not   the top element of $Q$. If it happens that the unit element coincides with the top element of $Q$, then we say that $\mathcal{Q}$ is \emph{integral}. If the operation $\&$ is commutative then we say $\mathcal{Q}$ is a commutative quantale. A quantale $(Q,\&)$ is  meet continuous if the underlying lattice $Q$ is    meet  continuous.

\begin{SA} Throughout this paper, if not otherwise specified, all quantales are assumed to be integral and commutative.\end{SA}

Since the semigroup operation $\&$ distributes over arbitrary joins, it  determines a binary operation $\ra$ on $Q$ via the adjoint property \begin{equation*} p\&q\leq r\iff q\leq p\ra r. \end{equation*}
The  binary operation $\ra$ is called the \emph{implication}, or the \emph{residuation}, corresponding to  $\&$.

Some basic properties of the binary operations $\&$ and $\ra$ are collected below, they can be found in many places, e.g.  \cite{Belo02,Rosenthal1990}.

\begin{prop}\label{2.1} Let $\mathcal{Q}$ be a quantale. Then
\begin{enumerate}[(1)]
\item  $1\ra p=p$.
\item  $p\leq q \iff 1= p\ra q$.
\item  $p\ra(q\ra r)=(p\& q)\ra r$.
\item  $p\&(p\ra q)\leq q$.
\item  $\Big(\bv_{j\in J}p_j\Big)\ra q=\bw_{j\in J}(p_j\ra q)$.
\item  $p\ra\Big(\bw_{j\in J} q_j\Big)=\bw_{j\in J}(p\ra q_j)$. \item $p=\bw_{q\in Q}((p\ra q)\ra q)$.
\end{enumerate}
\end{prop}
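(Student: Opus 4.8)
The plan is to derive every part from the single defining adjunction $p\&q\le r \iff q\le p\ra r$ together with integrality, which guarantees that the unit $1$ is the top element and satisfies $1\&q=q$. Almost all of the identities are proved by the same device: to show two elements $a,b\in Q$ coincide I verify $s\le a \iff s\le b$ for every $s\in Q$ and conclude $a=b$ from the fact that the order on $Q$ is determined by its principal down-sets; for the inequalities it suffices to run one direction of such a chain. So the proof is essentially a sequence of transpositions along the adjunction, and the earlier parts feed the later ones.

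First I would dispatch the transposition identities. For (1), $s\le 1\ra p \iff 1\&s\le p \iff s\le p$, so $1\ra p=p$. Part (2) is the instance $s=1$ combined with the fact that $1$ is the top element: $p\le q \iff 1\le p\ra q \iff 1=p\ra q$. Part (3) follows from the chain $s\le p\ra(q\ra r)\iff p\&s\le q\ra r\iff q\&(p\&s)\le r\iff (p\&q)\&s\le r\iff s\le (p\&q)\ra r$, using associativity and commutativity of $\&$. Part (4) is merely the counit, obtained by transposing $p\ra q\le p\ra q$. Parts (5) and (6) are again pure transposition: for (5) one turns the join on the left into a meet of residuations via distributivity of $\&$ over joins, $s\le(\bv_j p_j)\ra q\iff \bv_j(p_j\&s)\le q\iff \forall j,\ p_j\&s\le q\iff s\le\bw_j(p_j\ra q)$, and for (6) one commutes $p\ra(-)$ past the meet in the same style. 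None of these presents a genuine obstacle; they are purely formal consequences of the adjunction.

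The only part requiring a small idea is (7), the double-residuation formula $p=\bw_{q\in Q}((p\ra q)\ra q)$, so I would treat it last and as two inequalities. The inequality $p\le\bw_{q}((p\ra q)\ra q)$ holds termwise: for each fixed $q$, transposing $p\le(p\ra q)\ra q$ gives $(p\ra q)\&p\le q$, which is exactly (4), so $p$ is a lower bound of the whole family. For the reverse inequality I would not try to estimate the entire meet but instead exhibit a single index that already pins it down, namely $q=p$: by (2) we have $p\ra p=1$, and then by (1) the corresponding term is $(p\ra p)\ra p=1\ra p=p$, whence the meet is $\le p$. Combining the two gives equality. The main (and only) subtlety is recognizing that the meet is attained at $q=p$; once that choice is made, parts (1), (2) and (4) do all the work, and no meet-continuity or further structure of $\CQ$ is needed.
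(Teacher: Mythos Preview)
Your proof is correct in every part; the transposition device you use is the standard one, and your handling of (7) by first showing $p$ is a lower bound via (4) and then bounding the meet from above by the single term at $q=p$ is exactly the clean argument.

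The paper, however, does not prove this proposition at all: it is stated as a list of well-known identities with a reference to the literature (B{\v e}lohl{\' a}vek, Rosenthal). So there is nothing to compare against in terms of approach---you have simply supplied what the paper omits. Your write-up is self-contained and relies only on the adjunction, commutativity and associativity of $\&$, and integrality, which is all that is assumed.
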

We often write $\neg p$ for $p\ra 0$ and call it the \emph{negation} of $p$. Though it is  true that $p\leq\neg\neg p$ for all $p\in Q$, the inequality $\neg\neg p\leq p$ does not always hold.
A quantale $\mathcal{Q}$   satisfies the {\it law of double negation} if  \[(p\ra 0)\ra 0 = p \]    for all $p\in Q$.

\begin{prop}{\rm(\cite{Belo02})} \label{properties of negation} Suppose that $\CQ$ is a quantale that satisfies the law of double negation. Then \begin{enumerate}[(1)] \item $p\ra q = \neg(p\&\neg q)=\neg q\ra \neg p$. \item $p\&q =\neg (q\ra\neg p)=\neg (p\ra\neg q)$. \item $\neg(\bw_{i\in I}p_i) = \bv_{i\in I}\neg p_i$.  \end{enumerate}\end{prop}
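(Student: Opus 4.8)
The plan is to treat Proposition~\ref{2.1}(3) (the ``shift'' law $p\ra(q\ra r)=(p\,\&\,q)\ra r$) and the double negation law $\neg\neg p=p$ as the two workhorses: every one of the three identities collapses to a short residuation computation. I would prove the statements in the order (1), (2), (3), since (2) is an immediate corollary of (1), and (3) follows from Proposition~\ref{2.1}(5) once one notices that $\neg$ is an involution. Commutativity of $\&$ (our standing assumption) is used freely.

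For (1) I would compute $\neg(p\,\&\,\neg q)$ directly. By definition this is $(p\,\&\,\neg q)\ra 0$, and Proposition~\ref{2.1}(3) rewrites it as $p\ra(\neg q\ra 0)=p\ra\neg\neg q=p\ra q$, where the last step is exactly where the law of double negation is invoked (it upgrades the always-valid $\neg\neg q\geq q$ to equality). For the second equality I would write $\neg q\ra\neg p=\neg q\ra(p\ra 0)=(\neg q\,\&\,p)\ra 0$ again by Proposition~\ref{2.1}(3), and then commutativity identifies $(\neg q\,\&\,p)\ra 0$ with $(p\,\&\,\neg q)\ra 0=\neg(p\,\&\,\neg q)$, closing the loop with the first computation.

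For (2) I would apply (1) with second argument $\neg p$: since $q\ra\neg p=\neg(q\,\&\,\neg\neg p)=\neg(q\,\&\,p)=\neg(p\,\&\,q)$, applying $\neg$ and using double negation gives $\neg(q\ra\neg p)=\neg\neg(p\,\&\,q)=p\,\&\,q$; the expression $\neg(p\ra\neg q)$ is handled symmetrically. For (3) the conceptual point is that $\neg$ is an order-reversing involution, hence a lattice anti-isomorphism swapping $\bw$ and $\bv$. Concretely, Proposition~\ref{2.1}(5) yields $\neg(\bv_i r_i)=(\bv_i r_i)\ra 0=\bw_i(r_i\ra 0)=\bw_i\neg r_i$; setting $r_i=\neg p_i$ gives $\neg(\bv_i\neg p_i)=\bw_i\neg\neg p_i=\bw_i p_i$, and applying $\neg$ to both sides produces $\bv_i\neg p_i=\neg(\bw_i p_i)$, as desired.

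I do not anticipate a genuine obstacle here, as each identity reduces to one or two applications of the residuation laws already recorded in Proposition~\ref{2.1}. The only point that demands care is bookkeeping of where the double negation hypothesis is actually needed: it enters in (1) to replace $\neg\neg q$ by $q$, propagates into (2), and is what permits cancelling the outer $\neg$ in (3). Everything else is purely formal and holds in any (integral, commutative) quantale.
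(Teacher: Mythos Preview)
Your proof is correct; each of the three identities is handled cleanly via Proposition~\ref{2.1}(3), (5) and the double negation law, with commutativity invoked exactly where needed. The paper itself offers no proof of this proposition---it merely cites \cite{Belo02}---so there is nothing to compare your argument against, but what you have written is the standard verification and would serve well as a supplied proof.
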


The quantales with the unit interval $[0,1]$ as underlying lattice are of particular interest in fuzzy set theory \cite{Ha98,KMP00}. In this case, the semigroup operation $\&$ is exactly a left continuous t-norm on $[0,1]$ \cite{KMP00}. A continuous t-norm on $[0,1]$ is a left continuous t-norm $\&$ that is continuous  with respect to the usual topology.

\begin{exmp} (\cite{KMP00})
 Some basic   t-norms:
\begin{enumerate}[(1)]
\item The  t-norm $\min$: $a\&b=a\wedge b=\min\{a,b\}$. The corresponding implication is given by \[a\ra
b=\left\{\begin{array}{ll} 1, & a\leq b;\\
b, & a>b.\end{array}\right.\]
\item The product t-norm: $a\&b=a\cdot b$. The
corresponding implication  is given by $$a\ra
b=\left\{\begin{array}{ll} 1, & a\leq b;\\
b/a, & a>b.\end{array}\right.$$
\item The {\L}ukasiewicz t-norm:
$a\&b=\max\{a+b-1,0\}$. The corresponding implication is given by
$$a\ra b=
\min\{1, 1-a+b\}. $$ In this case, $([0,1],\&)$ satisfies the law of double negation. \item The nilpotent minimum t-norm: $$a\& b=\left\{\begin{array}{ll} 0, & a+b\leq 1;\\ \min\{a,b\}, & a+b>1.\end{array}\right.$$ The corresponding implication  is given by $$a\ra b=\left\{\begin{array}{ll} 1, & a\leq b;\\ \max\{1-a,b\}, & a>b.\end{array}\right.$$  In this case, $([0,1],\&)$ satisfies the law of double negation. \end{enumerate}\end{exmp}
The following theorem, known as the ordinal sum decomposition theorem, is of fundamental importance in the theory of continuous t-norms.

\begin{thm} {\rm(\cite{Fau55,Mostert1957})} \label{ordinal sum}
For each continuous t-norm $\&$  on $[0,1]$, there is a set of disjoint open intervals $\{(a_i,b_i)\}$ of $[0,1]$ that satisfy the following conditions: \begin{enumerate}[(i)] \item For each $i$, both $a_i$ and $b_i$ are idempotent and the restriction of $\&$ on $[a_i,b_i]$ is either isomorphic to the \L ukasiewicz t-norm or to the product t-norm; \item $x\&y=\min\{x,y\}$ if $(x,y)\notin\bigcup_i[a_i,b_i]^2$.  \end{enumerate}
\end{thm}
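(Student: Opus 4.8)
The plan is to reduce the statement to the classical structure theory of continuous Archimedean t-norms, organizing everything around the idempotent elements of $\&$. Write $E=\{e\in[0,1]:e\&e=e\}$ for the set of idempotents. Since $x\mapsto x\&x$ is continuous, $E$ is the equalizer of two continuous maps and hence closed; its complement is therefore a countable union of pairwise disjoint open intervals $(a_i,b_i)$, and because $E$ is closed each endpoint $a_i,b_i$ lies in $E$, i.e.\ is idempotent. This already produces the family $\{(a_i,b_i)\}$ together with the idempotency of endpoints demanded in (i). It then remains to analyze $\&$ on and off the closed blocks $[a_i,b_i]$.

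The first key lemma I would establish is that \emph{every idempotent acts as the meet}: if $e\in E$ then $e\&x=\min\{e,x\}$ for all $x$. For $x\ge e$ this is immediate from monotonicity, since $e=e\&e\le e\&x\le e\&1=e$. For $x\le e$ I would argue via the self-map $f(x)=e\&x$: associativity together with $e\&e=e$ gives $f\circ f=f$, so $f$ is a continuous idempotent endomap whose image equals its fixed-point set; as $f$ is monotone with $f(0)=0$ and $f(e)=e$, continuity forces $f([0,e])=[0,e]$, so every point of $[0,e]$ is fixed and $e\&x=x$ there. Two consequences follow quickly. First, whenever $x\le y$ admit an idempotent $e$ with $x\le e\le y$, one gets $x\&y\le x\&1=x$ and $x\&y\ge x\&e=x$, whence $x\&y=\min\{x,y\}$; a short case analysis on the interval components shows that $(x,y)\notin\bigcup_i[a_i,b_i]^2$ always supplies such a separating idempotent, which is precisely condition (ii). Second, each block $[a_i,b_i]$ is closed under $\&$, since $a_i=a_i\&a_i\le x\&y\le x\le b_i$, with $b_i$ acting as identity and $a_i$ as zero on the block.

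Rescaling each block by an increasing homeomorphism $[a_i,b_i]\cong[0,1]$ carrying $a_i\mapsto 0$ and $b_i\mapsto 1$ turns the restricted operation into a continuous t-norm on $[0,1]$ with no idempotent in the open interval $(0,1)$, i.e.\ a continuous \emph{Archimedean} t-norm. The final and deepest step is the representation theorem for such t-norms: a continuous Archimedean t-norm $T$ admits a continuous strictly decreasing additive generator $t\colon[0,1]\to[0,\infty]$ with $t(1)=0$ and $T(x,y)=t^{(-1)}(t(x)+t(y))$. Granting this, the dichotomy is transparent: if $t(0)=\infty$ the generator is order-isomorphic (via $-\log$) to that of the product t-norm, while if $t(0)<\infty$ one normalizes $t(0)=1$ and recognizes the \L ukasiewicz generator $1-x$, giving condition (i). I expect the construction of the additive generator to be the main obstacle --- it is the genuinely analytic ingredient, proved either through Ling's direct solution of the associativity functional equation or, as in \cite{Mostert1957}, via the classification of compact connected ordered topological semigroups through their one-parameter subsemigroups; by contrast, the reduction in the first two paragraphs is elementary order-topological bookkeeping around the idempotents.
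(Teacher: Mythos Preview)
The paper does not prove this theorem at all: it is quoted as a classical result attributed to \cite{Fau55,Mostert1957} and simply used later (in the proof of Theorem~\ref{main} and in the final example on the Scott $\CQ$-cotopology of $([0,1],d_R)$). So there is nothing in the paper to compare your argument against.

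That said, your outline is the standard one and is essentially correct. A couple of minor remarks. First, in the lemma that an idempotent $e$ satisfies $e\&x=x$ for $x\le e$, the step ``continuity forces $f([0,e])=[0,e]$'' deserves one more word: $f$ is monotone and continuous with $f(0)=0$, $f(e)=e$, so its image is a connected subset of $[0,e]$ containing both endpoints, hence all of $[0,e]$; combined with $f\circ f=f$ this gives $f=\id$ on $[0,e]$ as you say. Second, for condition~(ii) the ``short case analysis'' should also cover the case where both $x$ and $y$ are idempotent (so neither lies in any $(a_i,b_i)$); there the lemma applies directly with $e=\min\{x,y\}$. Finally, you are right that the substantive content is the additive-generator representation of continuous Archimedean t-norms; everything else is elementary. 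If you want a textbook reference for that step rather than the original semigroup papers, \cite{KMP00} (already in the paper's bibliography) contains a self-contained proof.
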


A \emph{$\mathcal{Q}$-order} (or an order valued in the quantale $\CQ$) \cite{Wagner94,Zadeh71} on a set $A$ is a reflexive and transitive $\mathcal{Q}$-relation on $A$. Explicitly, a $\mathcal{Q}$-order on  $A$  is a map $R: A\times A\lra Q$ such that $R(x,x)=1$   and  $R(y,z)\& R(x,y)\leq R(x,z)$ for any $x,y,z\in A$.  The pair $(A,R)$ is called a  $\mathcal{Q}$-ordered  set.  A $\CQ$-ordered set is also called a \emph{$\CQ$-category} in the literature,  since it is precisely a category enriched over the symmetric monoidal category $\CQ$. As usual, we write $A$ for the pair $(A, R)$ and $A(x,y)$ for $R(x,y)$ if no confusion would arise.

Two elements $x,y$ in a $\CQ$-ordered set $A$ are \emph{isomorphic} if $A(x,y)=A(y,x)=1$. We say that $A$ is \emph{separated} if  isomorphic elements in $A$ are equal, that is, $A(x,y)=A(y,x)=1$ implies that $x=y$.

If $R: A\times A\lra Q$ is a $\CQ$-order on $A$, then $R^{\rm op}: A\times A\lra Q$, given by $R^{\rm op}(x,y)=R(y,x)$, is also a $\CQ$-order on $A$ (by commutativity of $\&$), called the opposite of $R$.

\begin{exmp} This example belongs to the folklore in  fuzzy order theory, see e.g. \cite{Belo02}. For all $p, q\in Q$, let \[d_L(p,q)=p\ra q.\] Then $(Q,d_L)$ is a separated $\CQ$-ordered set. The opposite of $(Q,d_L)$ is   $(Q,d_R)$, where \[d_R(p,q)=q\ra p.\] Both $(Q,d_L)$ and $(Q,d_R)$ play important roles in the theory of $\CQ$-ordered sets. \end{exmp}

\begin{exmp} \cite{Belo02} Let $X$ be a set. A map $\lam: X\lra Q$ is called a  fuzzy set (valued in $\CQ$) of $X$, the value $\lam(x)$ is  interpreted as the membership degree of $x$.
The map \[\sub_X: Q^X\times Q^X\lra Q,\] given by
\begin{equation*}\sub_X(\lam, \mu)=\bw_{x\in X}\lam(x)\ra \mu(x),\end{equation*} defines a separated $\mathcal{Q}$-order on $Q^X$. Intuitively, the value $\sub_X(\lam,\mu)$ measures the degree that $\lam$ is a subset of $\mu$. Thus, $\sub_X$ is called the \emph{fuzzy inclusion order} on $Q^X$. The opposite of $\sub_X$ is called the \emph{converse fuzzy inclusion order} on $Q^X$. In particular, if $X$ is a singleton set then the $\mathcal{Q}$-ordered sets $(Q^X,\sub_X)$  and $(Q^X,\sub_X^{\rm op})$ degenerate to the $\mathcal{Q}$-ordered sets $(Q,d_L)$ and $(Q,d_R)$, respectively.  \end{exmp}

A map $f: A\lra B$ between $\mathcal{Q}$-ordered sets is $\mathcal{Q}$-order preserving if \[A(x_1,x_2)\leq B(f(x_1),f(x_2))\]  for any $x_1,x_2\in A$. We write \[\mbox{$\CQ$-{\sf Ord}}\] for the category of $\CQ$-ordered sets and $\mathcal{Q}$-order preserving maps.

Let $f: A\lra B$ and $g:B\lra A$ be $\CQ$-order preserving maps. We say $f$ is   left adjoint to $g$ (or, $g$ is   right adjoint to $f$), $f\dashv g$ in symbols, if $$A(x,g(y))=B(f(x),y)$$ for all $x\in A$ and $y\in B$.

Let $A,B$ be $\CQ$-ordered sets. A $\CQ$-distributor $\phi:A\oto B$ from  $A$ to $B$ is a map $\phi:A\times B\lra Q$ such that \[B(b,b')\&\phi(a,b)\&A(a',a)\leq \phi(a',b')\] for any $a,a'\in A$ and $b,b'\in B$. Roughly speaking, a  $\CQ$-distributor $\phi:A\oto B$ is a $\CQ$-relation between $A$ and $B$ that is compatible with the $\CQ$-orders on $A$ and $B$.

It is easy to see that the set $\CQ$-Dist$(A,B)$ of all $\CQ$-distributors from $A$ to $B$ form a complete lattice under the pointwise order.

\begin{exmp}\label{lower set as dist} (Fuzzy lower sets as $\CQ$-distributors)
A \emph{fuzzy lower  set} \cite{LZ06} of  a $\CQ$-ordered set $A$  is a map $\phi:A\lra Q$ such that \[\phi(y)\&A(x,y)\leq\phi(x).\]
It is obvious that $\phi:A\lra Q$ is a fuzzy lower set if and only if $\phi: A\lra(Q, d_R)$ preserves $\CQ$-order.

Dually, a \emph{fuzzy upper  set} \cite{LZ06} of    $A$  is a map $\psi:A\lra Q$ such that \[A(x,y)\&\psi(x)\leq\psi(y).\] It is clear that $\psi:A\lra Q$ is a fuzzy upper set if and only if $\psi: A\lra(Q, d_L)$ preserves $\CQ$-order.

If we write $*$ for the terminal object in the category $\CQ$-{\sf Ord}, namely, $*$ is a $\CQ$-ordered set with only one element, then for each fuzzy lower set $\phi:A\lra Q$ of $A$, the map \[\phi\urcorner:A\times\{*\}\lra Q,\quad \phi\urcorner(x,*)=\phi(x)\] is a $\CQ$-distributor $\phi\urcorner:A\oto *$. This  establishes a bijection between fuzzy lower  sets of $A$ and $\CQ$-distributors from  $A$ to $*$.  For each fuzzy upper set $\psi$ of $A$, the map \[\ulcorner\psi: \{*\}\times A\lra Q,\quad \ulcorner\psi(*,x)=\psi(x)\] is a  $\CQ$-distributor $\psi:*\oto A$. This  establishes a bijection between fuzzy upper  sets of $A$ and $\CQ$-distributors from  $*$ to $A$.
\end{exmp}

\begin{lem}Let  $\phi$ be a fuzzy lower set (fuzzy upper set, resp.) of a $\CQ$-ordered set $A$,    $p\in Q$. \begin{enumerate}[(1)] \item   Both $p\&\phi$ and $p\ra \phi$ are  fuzzy lower sets (fuzzy upper sets, resp.) of $A$.
\item $\phi\ra p$ is a fuzzy upper set (fuzzy lower set, resp.) of $A$ and
    $\phi=\bw_{q\in Q}(\phi\ra q)\ra q)$. \end{enumerate} \end{lem}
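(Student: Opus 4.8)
The plan is to observe that all four assertions are \emph{pointwise} statements about the quantale $\CQ$, so each reduces to one of the identities in Proposition \ref{2.1} together with the adjoint property defining $\ra$. Moreover, a fuzzy upper set of $A$ is exactly a fuzzy lower set of the opposite $\CQ$-ordered set $A^{\rm op}$ (recall $A^{\rm op}(x,y)=A(y,x)$, so $\phi(y)\&A^{\rm op}(x,y)\leq\phi(x)$ reads $A(x,y)\&\phi(x)\leq\phi(y)$ after commuting $\&$). Hence I would prove only the fuzzy-lower-set versions and obtain every parenthetical ``resp.'' statement for free by applying them to $A^{\rm op}$.

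For the two easy claims I expect no difficulty. If $\phi$ is a fuzzy lower set, then for $p\&\phi$ I would compute $(p\&\phi)(y)\&A(x,y)=p\&(\phi(y)\&A(x,y))\leq p\&\phi(x)=(p\&\phi)(x)$, using associativity and commutativity of $\&$, its monotonicity, and the defining inequality of $\phi$. For the identity $\phi=\bw_{q\in Q}((\phi\ra q)\ra q)$, I would note that $\ra q$ and the meet are all computed pointwise, so evaluating at an arbitrary $x\in A$ turns the claim into $\phi(x)=\bw_{q\in Q}((\phi(x)\ra q)\ra q)$, which is exactly Proposition \ref{2.1}(7) with $p=\phi(x)$.

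The two remaining claims carry the (mild) work, and both hinge on the adjoint property $p\&q\leq r\iff q\leq p\ra r$ combined with Proposition \ref{2.1}(4). To see that $p\ra\phi$ is a fuzzy lower set I would verify $(p\ra\phi(y))\&A(x,y)\leq p\ra\phi(x)$; by residuation this is equivalent to $p\&(p\ra\phi(y))\&A(x,y)\leq\phi(x)$, and since $p\&(p\ra\phi(y))\leq\phi(y)$ by Proposition \ref{2.1}(4), the left side is bounded by $\phi(y)\&A(x,y)\leq\phi(x)$, the last step being that $\phi$ is a lower set. Dually, to see that $\phi\ra p$ is a fuzzy upper set I would verify $A(x,y)\&(\phi(x)\ra p)\leq\phi(y)\ra p$; residuation recasts this as $\phi(y)\&A(x,y)\&(\phi(x)\ra p)\leq p$, and using $\phi(y)\&A(x,y)\leq\phi(x)$ the left side is bounded by $\phi(x)\&(\phi(x)\ra p)\leq p$, again by Proposition \ref{2.1}(4). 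The only point that demands any care is keeping the order of the factors straight when invoking residuation, which is harmless here since $\&$ is commutative by the Standing Assumption; I do not anticipate any genuine obstacle.
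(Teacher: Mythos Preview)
Your proof is correct. The paper states this lemma without proof, treating it as routine; your argument supplies exactly the expected pointwise verifications via Proposition~\ref{2.1} and the adjunction defining $\ra$, and your reduction of the ``resp.'' cases to $A^{\rm op}$ is the standard device.
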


Let $\CP A$ denote the set of fuzzy lower sets of $A$ endowed with the fuzzy inclusion order.   Explicitly, elements in $\CP A$ are $\CQ$-order preserving maps $A\lra(Q, d_R)$, and \[\CP A(\phi_1,\phi_2)=\sub_A(\phi_1,\phi_2)=\bw_{x\in A}(\phi_1(x)\ra\phi_2(x)).\]
Dually, let $\CP^\dag A$ denote the set of fuzzy upper sets of $A$ endowed with the \emph{converse} fuzzy inclusion order. Explicitly,  elements in $\CP^\dag A$ are $\CQ$-order preserving maps $A\lra(Q, d_L)$, and \[\CP^\dag A(\psi_1,\psi_2)=\sub_A(\psi_2,\psi_1)=\bw_{x\in A}(\psi_2(x)\ra\psi_1(x)).\] It is clear that $(\CP^\dag A)^{\rm op}=\CP (A^{\rm op})$ \cite{St05}.

For each $a\in A$, $A(-,a)$ is a fuzzy lower set of $A$.  Moreover,
\[\CP A(A(-,a),\phi)=\phi(a)\] for all $a\in A$ and $\phi\in\CP A$. This fact is indeed a special case of the Yoneda lemma in enriched category theory. The Yoneda lemma
ensures that the assignment $a\mapsto A(-,a)$ defines an embedding $\y: A\lra\CP A$,  known as the  Yoneda embedding.

The correspondence $A\mapsto\CP A$ gives rise to a functor $\CP:\CQ$-${\sf Ord}\lra\CQ$-{\sf Ord} that sends a $\CQ$-order preserving map  $f: A\lra B$ to $\CP f=f^\ra:\CP A\lra\CP B$, where \[f^\ra(\phi)(y)=\bv_{x\in A}\phi(x)\&B(y,f(x)).\] Moreover, $f^\ra:\CP A\lra\CP B$ has a right adjoint given by $f^\la:\CP B\lra\CP A$, where $f^\la(\psi) = \psi\circ f$. This means for all $\phi\in\CP A$ and $\psi\in\CP B$, \begin{equation} \label{kan adjunction} \sub_B(f^\ra(\phi),\psi) = \sub_A(\phi,f^\la(\psi)).\end{equation}
The adjunction $f^\ra\dashv f^\la$ is a special case of the enriched Kan extension in category theory \cite{Kelly,Lawvere73}.

For $\CQ$-distributors $\phi:A\oto B$ and $\psi: B\oto C$, the composite $\psi\circ\phi: A\oto C$ is given by $$(\psi\circ\phi)(a,c)=\bv_{b\in B}\psi(b,c)\&\phi(a,b).$$

 It is clear that ($\CQ$-Dist$(*,*),\circ)$ is a quantale and   is isomorphic to  $\CQ=(Q,\&)$.  In this paper, we identify ($\CQ$-Dist$(*,*),\circ)$ with $\CQ$.

For a  fuzzy lower set $\phi:A\lra Q$ and a  fuzzy upper set $\psi:A\lra Q$    of a  $\CQ$-ordered set $A$,  the tensor product \[\phi\otimes\psi\] is defined as the composite of $\CQ$-distributors \[\phi\urcorner\circ\ulcorner\psi: *\oto A\oto*.\] Explicitly, $\phi \otimes \psi$ is an element of the quantale  $\CQ$ given by $\phi \otimes \psi=\bv_{x\in A}\phi(x)\&\psi(x)$. Intuitively, the value $\phi\otimes\psi$ measures the degree that the fuzzy lower set $\phi$ intersects with the fuzzy upper set $\psi$.

The correspondence \[(\psi,\phi)\mapsto \phi \otimes \psi\] defines a $\CQ$-distributor \[ \otimes: \CP^\dag A\oto\CP A.\]  In particular, for each fuzzy upper set $\psi$ of $A$, the correspondence $\phi\mapsto \phi\otimes \psi$ defines a  fuzzy upper set  of $\CP A$: \begin{equation}\label{compositon as distributor}{-\otimes \psi}: \CP A\lra Q. \end{equation}

The following  lemma  exhibits a close relationship between the $\CQ$-distributor $\otimes: \CP^\dag A\oto\CP A$ (intersection degree) and the fuzzy inclusion order (subset degree).

 \begin{lem}\label{tensor via sub} Let $A$ be a $\CQ$-ordered set. \begin{enumerate}[(1)]\item For each fuzzy lower set $\phi$ and each fuzzy upper set $\psi$ of $A$, \[\phi\otimes\psi=\bw_{p\in Q}(\sub_A(\phi,\psi\ra p)\ra p).\] In particular, if $\CQ$ satisfies the law of double negation, then $\phi\otimes\psi
= \neg(\sub_A(\phi, \neg\psi))$. \item  For any fuzzy lower sets $\phi_1,\phi_2$ of $A$, \[\sub_A(\phi_1,\phi_2)=\bw_{p\in Q}(\phi_1\otimes(\phi_2\ra p) \ra p).\]  In particular, if $\CQ$ satisfies the law of double negation, then $\sub_A(\phi_1, \phi_2) 
= \neg(\phi_1\otimes(\neg\phi_2))$. \end{enumerate} \end{lem}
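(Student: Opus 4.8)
The plan is to reduce both identities to the double-dualization formula $p=\bw_{q\in Q}((p\ra q)\ra q)$ of Proposition~\ref{2.1}(7), together with the adjunction identities (3), (5), (6) of the same proposition. Everything is a pointwise manipulation of the defining formulas $\phi\otimes\psi=\bv_{x\in A}\phi(x)\&\psi(x)$ and $\sub_A(\lam,\mu)=\bw_{x\in A}(\lam(x)\ra\mu(x))$, recalling that $(\psi\ra p)(x)=\psi(x)\ra p$; in fact the computations never use that $\phi,\phi_1,\phi_2$ are lower sets or that $\psi$ is an upper set, so no structural input about $A$ is needed.

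For (1), I would first establish the intermediate identity
\[\sub_A(\phi,\psi\ra p)=\bw_{x\in A}\big(\phi(x)\ra(\psi(x)\ra p)\big)=\bw_{x\in A}\big((\phi(x)\&\psi(x))\ra p\big)=\Big(\bv_{x\in A}\phi(x)\&\psi(x)\Big)\ra p=(\phi\otimes\psi)\ra p,\]
contracting the nested implication by Proposition~\ref{2.1}(3) and turning the join inside $\ra$ into a meet by (5). Substituting this into the right-hand side and invoking Proposition~\ref{2.1}(7) with $p=\phi\otimes\psi$ gives $\bw_{p\in Q}(\sub_A(\phi,\psi\ra p)\ra p)=\bw_{p\in Q}(((\phi\otimes\psi)\ra p)\ra p)=\phi\otimes\psi$. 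The double-negation special case is then the instance $p=0$ of the intermediate identity: $\sub_A(\phi,\neg\psi)=(\phi\otimes\psi)\ra 0=\neg(\phi\otimes\psi)$, whence $\neg\sub_A(\phi,\neg\psi)=\neg\neg(\phi\otimes\psi)=\phi\otimes\psi$.

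For (2), I would expand the right-hand side directly. Writing $\phi_1\otimes(\phi_2\ra p)=\bv_{x\in A}\phi_1(x)\&(\phi_2(x)\ra p)$ and applying (5) then (3) yields the intermediate identity
\[(\phi_1\otimes(\phi_2\ra p))\ra p=\bw_{x\in A}\big(\phi_1(x)\ra((\phi_2(x)\ra p)\ra p)\big).\]
Taking $\bw_{p\in Q}$, interchanging the two meets, pulling $\phi_1(x)$ through the inner meet by Proposition~\ref{2.1}(6), and collapsing $\bw_{p\in Q}((\phi_2(x)\ra p)\ra p)=\phi_2(x)$ by (7) leaves $\bw_{x\in A}(\phi_1(x)\ra\phi_2(x))=\sub_A(\phi_1,\phi_2)$. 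The double-negation case is again the instance $p=0$: there the right-hand side of the intermediate identity becomes $\bw_{x\in A}(\phi_1(x)\ra\neg\neg\phi_2(x))=\bw_{x\in A}(\phi_1(x)\ra\phi_2(x))=\sub_A(\phi_1,\phi_2)$, so $\neg(\phi_1\otimes\neg\phi_2)=\sub_A(\phi_1,\phi_2)$ (this may also be read off from Proposition~\ref{properties of negation}(1)).

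There is no serious obstacle; once one notices that both sides are assembled from the self-dual identity (7), the computation is essentially forced. The only points requiring a little care are the legitimacy of interchanging $\bw_p$ with $\bw_x$ and of commuting $\bw_p$ past $\phi_1(x)\ra(-)$ in part (2); the former is just associativity of meets in the complete lattice $Q$, and the latter is exactly Proposition~\ref{2.1}(6).
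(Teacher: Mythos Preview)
Your proof is correct and follows essentially the same route as the paper: both arguments reduce to Proposition~\ref{2.1}(7) combined with the adjunction identities (3), (5), (6). For part~(1) the paper starts from $\phi\otimes\psi$, applies (7), and then rewrites the inner expression as $\sub_A(\phi,\psi\ra p)$, whereas you isolate the intermediate identity $\sub_A(\phi,\psi\ra p)=(\phi\otimes\psi)\ra p$ first and then invoke (7); these are the same chain of equalities read in opposite directions. For part~(2) the paper simply says ``similar'', and your computation is exactly the natural dual version.
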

  \begin{proof}(1) By Proposition \ref{2.1}(7), it holds that \begin{align*}\phi\otimes\psi &= \bv_{x\in A}\phi(x)\&\psi(x)\\ &= \bw_{p\in Q}\Big[\Big(\Big(\bv_{x\in A}\phi(x)\&\psi(x)\Big)\ra p\Big) \ra p\Big] \\ &= \bw_{p\in Q}\Big[\bw_{x\in A}(\phi(x)\ra(\psi(x)\ra p))\ra p\Big]  \\ &= \bw_{p\in Q}(\sub_A(\phi,\psi\ra p)\ra p). \end{align*}
(2) The proof is similar, so, we omit it here.  \end{proof}

A supremum of a fuzzy lower set $\phi$ of a $\CQ$-ordered set $A$ is an element of $A$, say $\sup\phi$, such that \[A(\sup\phi,x)=\sub_A(\phi,\y(x))\] for all $x\in A$. It is clear that, up to isomorphism, every fuzzy lower set has at most one supremum. So, we'll speak of \emph{the supremum of a fuzzy lower set}. A $\CQ$-order preserving map $f:A\lra B$ preserves the supremum of a fuzzy lower set $\phi$ of $A$ if,  whenever $\sup\phi$ exists, $f(\sup\phi)$ is a supremum  of $f^\ra(\phi)$. It is well-known that left adjoints preserve suprema.

\begin{exmp}\cite{St05} \label{PA is complete} Let $A$ be a $\CQ$-ordered set. Then every fuzzy lower set of $\CP A$ has a supremum. Actually, for each fuzzy lower set $\Lambda$ of $\CP A$, $\sup\Lambda=\bv_{\phi\in\CP A}\Lambda(\phi)\&\phi$.  \end{exmp}

\begin{exmp}(Intersection degree as supremum) \label{intersection as sup} For each fuzzy lower set  $\phi$ and each fuzzy upper set $\psi$   of a $\CQ$-ordered set $A$, the intersection degree of $\phi$ with $\psi$ is the supremum of $\psi^\ra(\phi)$ in $(Q,d_L)$ (recall that $\psi:A\lra(Q,d_L)$ is a $\CQ$-order preserving map), i.e., $\phi\otimes\psi=\sup\psi^\ra(\phi)$.  This is because for all $q\in Q$, \begin{align*}\sub_Q(\psi^\ra(\phi),d_L(-,q)) &=\sub_A(\phi,d_L(\psi(-), q))\\ &=\bw_{x\in A}(\phi(x)\ra(\psi(x)\ra q)) \\ &= d_L\Big(\bv_{x\in A}\phi(x)\&\psi(x), q\Big)\\ &=d_L(\phi\otimes\psi,q).  \end{align*}

In particular, letting $\psi$   be the identity map on  $(Q,d_L)$ one obtains that for each fuzzy lower set $\phi$ of $(Q,d_L)$, $\sup\phi=\bv_{q\in Q}q\&\phi(q)$. \end{exmp}

\begin{exmp}(Inclusion degree as supremum) \label{inclusion as sup} For any fuzzy lower sets $\phi,\lam$  of a $\CQ$-ordered set $A$,  the inclusion degree $\sub_A(\phi,\lam)$ is the supremum of  $\lam^\ra(\phi)$ in $(Q,d_R)$ (recall that $\lam:A\lra(Q,d_R)$ is a $\CQ$-order preserving map), i.e., $\sub_A(\phi,\lam)=\sup\lam^\ra(\phi)$.  This is because for all $q\in Q$, \begin{align*}\sub_Q(\lam^\ra(\phi),d_R(-,q)) &=\sub_A(\phi,d_R(\lam(-), q))\\ &=\bw_{x\in A}(\phi(x)\ra(q\ra\lam(x))) \\ &= \bw_{x\in A}(q\ra(\phi(x)\ra\lam(x)))\\ &=d_R(\sub_A(\phi,\lam),q).  \end{align*}

In particular, letting $\lam$   be the identity map on  $(Q,d_R)$ one obtains that for each fuzzy lower set $\phi$ of $(Q,d_R)$,   the supremum of $\phi$ in $(Q,d_R)$ is given by $\bw_{q\in Q}(\phi(q)\ra q)$.  \end{exmp}

\section{Forward Cauchy ideals, flat ideals and irreducible ideals}

A net $\{x_i\}$ in a $\CQ$-ordered set $A$ is forward Cauchy \cite{Wagner97} if \[\bv_i\bw_{i\leq j\leq k}A(x_j,x_k)=1.\]
Forward Cauchy nets are clearly a $\CQ$-analogue of eventually monotone nets in partially ordered sets.  A Yoneda limit (a.k.a liminf) \cite{Wagner97} of a forward Cauchy net $\{x_i\}$ in $A$ is an element $a$ in $A$ such that \[A(a,y)=\bv_i\bw_{i\leq j}A(x_j,y) \] for all $y\in A$. It is clear that Yoneda limit is a $\CQ$-version of \emph{least eventual upper bound}.   Yoneda limits of a forward Cauchy net, if exist, are unique up to isomorphism.

\begin{lem}\label{yoneda limit in Q} If $\{a_i\}$ is a forward Cauchy net in $(Q,d_L)$, then $\bv_i\bw_{j\geq i}a_j$ is a Yoneda limit of $\{a_i\}$ and \[\bv_i\bw_{j\geq i}a_j=\bw_i\bv_{j\geq i}a_j.\] \end{lem}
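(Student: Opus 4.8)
The plan is to reduce both assertions to a single estimate powered by the forward Cauchy hypothesis. Write $b_i=\bw_{j\geq i}a_j$ and $s_i=\bv_{j\geq i}a_j$, so that $L:=\bv_i b_i$ is the claimed limit and $U:=\bw_i s_i$; here $b_i$ increases and $s_i$ decreases along the net. Also abbreviate $c_i=\bw_{i\leq j\leq k}(a_j\ra a_k)$, so that $c_i$ increases in $i$ and the forward Cauchy condition reads $\bv_i c_i=1$. Two observations are immediate and need no hypothesis: choosing $i''\geq i,i'$ gives $b_i\leq a_{i''}\leq s_{i'}$, whence $b_i\leq U$ for all $i$ and therefore $L\leq U$; in particular $L\leq s_i$ for every $i$.

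The crux is the estimate $c_i\&s_i\leq L$. To obtain it, I would first unwind $c_i\leq a_j\ra a_k$ for all $i\leq j\leq k$, which by Proposition~\ref{2.1}(4) yields $c_i\&a_j\leq a_k$ whenever $i\leq j\leq k$. Fixing $j\geq i$ and taking the meet over $k\geq j$ gives $c_i\&a_j\leq b_j\leq L$; since $\&$ distributes over joins, joining over $j\geq i$ produces $c_i\&s_i=\bv_{j\geq i}(c_i\&a_j)\leq L$. This is the one place where the forward Cauchy property does real work, and I expect it to be the main obstacle: the content is precisely that tensoring with $c_i$ (a quantity forced close to $1$ by forward Cauchyness) collapses the ``limsup tail'' $s_i$ into the ``liminf'' $L$.

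Granting the estimate, part (2) is quick. Since $U\leq s_i$, we get $c_i\&U\leq c_i\&s_i\leq L$ for every $i$; joining over $i$ and using distributivity together with $\bv_i c_i=1$ gives $U=(\bv_i c_i)\&U=\bv_i(c_i\&U)\leq L$. Combined with $L\leq U$ this yields $L=U$, which is the displayed identity.

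For part (1) I must check $L\ra y=\bv_i\bw_{j\geq i}(a_j\ra y)$ for all $y$. By Proposition~\ref{2.1}(5) the right-hand side rewrites as $\bv_i(s_i\ra y)$. The inequality $\bv_i(s_i\ra y)\leq L\ra y$ follows at once from $L\leq s_i$ and antitonicity of $-\ra y$. For the reverse inequality I would feed the crux estimate through the residuation: from $c_i\&s_i\leq L$ and Proposition~\ref{2.1}(3) we get $L\ra y\leq (c_i\&s_i)\ra y=c_i\ra(s_i\ra y)$, i.e. $c_i\&(L\ra y)\leq s_i\ra y\leq\bv_{i'}(s_{i'}\ra y)$; joining over $i$ and again invoking $\bv_i c_i=1$ gives $L\ra y\leq\bv_i(s_i\ra y)$, as required. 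Thus the whole argument rests on the single lemma $c_i\&s_i\leq L$, with parts (1) and (2) following by the same ``tensor with $c_i$, then join'' manoeuvre.
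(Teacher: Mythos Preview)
Your proof is correct, and the central estimate $c_i\&s_i\leq L$ is both clean and efficient. The paper, however, organises the argument differently: it does not prove the Yoneda-limit claim at all but simply cites Wagner's \cite[Proposition 2.30]{Wagner97} for it, and then \emph{derives} the equality $L=U$ from that claim. Concretely, knowing that $L$ is a Yoneda limit gives $L\ra x=\bv_i\bw_{j\geq i}(a_j\ra x)=\bv_i(s_i\ra x)\leq U\ra x$ for every $x$, and specialising $x=L$ forces $U\leq L$. So the paper's flow is ``part~(1) $\Rightarrow$ part~(2)'', whereas yours is ``one inequality $\Rightarrow$ both parts''.

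What your approach buys is self-containment: you never appeal to Wagner, and the forward Cauchy hypothesis is used exactly once, in the line $c_i\&a_j\leq a_k$. This also makes transparent why the same ``tensor with $c_i$, then join'' trick suffices for both conclusions. What the paper's approach buys is brevity (given the citation) and a logical dependency that is reused elsewhere: the analogous Lemma~\ref{order convergence} for $(Q,d_R)$ is proved in the paper by first establishing the Yoneda-limit property directly and then extracting the liminf$=$limsup identity from it in the same way, so the paper's pattern is uniform across the two lemmas. Your estimate-based argument would adapt to $(Q,d_R)$ as well, but the paper chose the other route.
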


\begin{proof}The first half is   \cite[Proposition 2.30]{Wagner97}. It remains to check the equality  \[\bv_i\bw_{j\geq i}a_j=\bw_i\bv_{j\geq i}a_j.\]

 Since $\bv_i\bw_{j\geq i}a_j$ is a Yoneda limit of $\{a_i\}$, it follows that for all $x\in Q$, \begin{align*} \Big(\bv_i\bw_{j\geq i}a_j\Big)\ra x& = d_L\Big(\bv_i\bw_{j\geq i}a_j,x\Big)\\ &=\bv_i\bw_{j\geq i}d_L(x,a_j) \\ &=\bv_i\bw_{j\geq i}(a_j\ra x) \\ &\leq   \Big(\bw_i\bv_{j\geq i}a_j\Big)\ra x. \end{align*} Letting $x= \bv_i\bw_{j\geq i}a_j$ we obtain that  \[\bv_i\bw_{j\geq i}a_j\geq\bw_i\bv_{j\geq i}a_j.\]  The inequality `$\leq$' is trivial, so, the equality is valid. \end{proof}

 \begin{prop} \label{3.9}  {\rm(A special case of \cite[Theorem 3.1] {Wagner97})} For each forward Cauchy net $\{\phi_i\}$ in $\CP A$, the fuzzy lower set $\bv_i\bw_{j\geq i}\phi_j$ is a Yoneda limit of $\{\phi_i\}$.  That is, for each fuzzy lower set $\phi$ of $A$, \[\sub_A\Big(\bv_i\bw_{j\geq i}\phi_j,\phi\Big)
 =\bv_i\bw_{j\geq i}\sub_A(\phi_j,\phi). \] \end{prop}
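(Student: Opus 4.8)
The plan is to show that the fuzzy lower set $\Phi:=\bv_i\bw_{j\geq i}\phi_j$ satisfies the defining equation of a Yoneda limit by establishing the two inequalities separately. First I would record two convenient reformulations. Writing $\psi_i:=\bw_{j\geq i}\phi_j$ (a pointwise infimum, which is again a fuzzy lower set), we have $\Phi=\bv_i\psi_i$, so Proposition \ref{2.1}(5) applied argumentwise gives
\[\sub_A(\Phi,\phi)=\bw_i\sub_A(\psi_i,\phi).\]
Denoting the target right-hand side by $R:=\bv_i\bw_{j\geq i}\sub_A(\phi_j,\phi)$, the goal is thus $\bw_i\sub_A(\psi_i,\phi)=R$.

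For the inequality $R\leq\sub_A(\Phi,\phi)$: since $\psi_i\leq\phi_j$ for every $j\geq i$, antitonicity of $\sub_A(-,\phi)$ gives $\sub_A(\psi_i,\phi)\geq\sub_A(\phi_j,\phi)$, hence $\sub_A(\psi_i,\phi)\geq\bw_{j\geq i}\sub_A(\phi_j,\phi)$. As $i$ varies along the (directed) index set, $\sub_A(\psi_i,\phi)$ is decreasing while $\bw_{j\geq i}\sub_A(\phi_j,\phi)$ is increasing, so a routine comparison of a decreasing net dominating an increasing one yields $R\leq\bw_i\sub_A(\psi_i,\phi)=\sub_A(\Phi,\phi)$. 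This half uses nothing about $\CQ$ beyond its order.

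The converse inequality $\sub_A(\Phi,\phi)\leq R$ is the crux, and this is where the forward Cauchy hypothesis enters. I would put $\alpha_i:=\bw_{i\leq j\leq k}\sub_A(\phi_j,\phi_k)$, so that forward Cauchyness reads $\bv_i\alpha_i=1$. The key estimate is that $\psi_k(x)\geq\alpha_i\&\phi_k(x)$ for all $k\geq i$: indeed, for $j\geq k\ (\geq i)$ we have $\sub_A(\phi_k,\phi_j)\geq\alpha_i$ and $\sub_A(\phi_k,\phi_j)\&\phi_k(x)\leq\phi_j(x)$ by Proposition \ref{2.1}(4), so taking the infimum over $j\geq k$ gives it. Feeding this into $\sub_A(\psi_k,\phi)=\bw_x(\psi_k(x)\ra\phi(x))$ and using antitonicity of $\ra$ in its first variable together with Proposition \ref{2.1}(3) and (6) yields $\sub_A(\psi_k,\phi)\leq\alpha_i\ra\sub_A(\phi_k,\phi)$, equivalently
\[\alpha_i\&\sub_A(\psi_k,\phi)\leq\sub_A(\phi_k,\phi)\qquad(k\geq i).\]
Since $\sub_A(\Phi,\phi)=\bw_{k'}\sub_A(\psi_{k'},\phi)\leq\sub_A(\psi_k,\phi)$, monotonicity of $\&$ gives $\alpha_i\&\sub_A(\Phi,\phi)\leq\sub_A(\phi_k,\phi)$ for every $k\geq i$, whence $\alpha_i\&\sub_A(\Phi,\phi)\leq\bw_{k\geq i}\sub_A(\phi_k,\phi)\leq R$. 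I would then take the join over $i$ and invoke distributivity of $\&$ over arbitrary joins in a quantale together with $\bv_i\alpha_i=1$:
\[\sub_A(\Phi,\phi)=\Big(\bv_i\alpha_i\Big)\&\sub_A(\Phi,\phi)=\bv_i\big(\alpha_i\&\sub_A(\Phi,\phi)\big)\leq R.\]

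I expect the main obstacle to be resisting the temptation to prove $\sub_A(\Phi,\phi)\leq R$ by interchanging $\bw_x$ with $\bv_i\bw_{j\geq i}$; that interchange is precisely a limsup/liminf exchange and fails for a general, non meet-continuous quantale. The device that sidesteps it is the forward Cauchy modulus $\alpha_i$, which absorbs the ``error'' into a factor that is then discharged by the quantale distributivity law. I note that Lemma \ref{yoneda limit in Q} could instead be used pointwise to rewrite $\big(\bv_i\bw_{j\geq i}\phi_j(x)\big)\ra\phi(x)=\bv_i\bw_{j\geq i}(\phi_j(x)\ra\phi(x))$, but the modulus argument reaches the same conclusion without any appeal to an interchange of suprema and infima.
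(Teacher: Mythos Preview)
Your argument is correct. The paper does not supply its own proof of this proposition; it simply records it as a special case of Wagner's \cite[Theorem~3.1]{Wagner97}, so there is nothing in the text to compare your route against.

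Your approach is a clean, self-contained verification. The first inequality follows, as you observe, from the purely order-theoretic fact that a decreasing directed net dominating an increasing one forces $\sup b_i\leq\inf a_i$ (via a common refinement index). The second inequality is handled by the forward Cauchy modulus $\alpha_i=\bw_{i\leq j\leq k}\sub_A(\phi_j,\phi_k)$ and the quantale distributivity law, exactly the mechanism Wagner's general theorem relies on. Your remark that the pointwise use of Lemma~\ref{yoneda limit in Q} would give an alternative to the modulus trick is also right, though it would only rewrite each $\Phi(x)\ra\phi(x)$ as $\bv_i\bw_{j\geq i}(\phi_j(x)\ra\phi(x))$ and one would still need a Cauchy argument to pass the outer $\bw_x$ through the $\bv_i$; your modulus route handles this in one stroke.
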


The following proposition says that every Yoneda limit  of forward Cauchy net $\{x_i\}$ is a  supremum of a fuzzy lower set generated by $\{x_i\}$.

\begin{prop} {\rm (\cite[Lemma 46]{FSW})} \label{yoneda limit as suprema} An element $a$ in a $\CQ$-ordered set $A$ is a Yoneda limit of a forward Cauchy net $\{x_i\}$ if and only if $a$ is a supremum of the fuzzy lower set $\bv_i\bw_{i\leq j}A(-,x_j)$ generated by $\{x_i\}$. \end{prop}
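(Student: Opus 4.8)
The plan is to show that the two conditions on $a$ unwind to one and the same equation, namely
\[A(a,y)=\bv_i\bw_{i\leq j}A(x_j,y)\quad\text{for all }y\in A,\]
so that the equivalence is not merely a logical biconditional but an identity of conditions, obtained once the defining formula of the supremum is computed. The engine of the computation will be Proposition \ref{3.9}, which is stated immediately above precisely for this purpose.

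First I would set $\phi_j=\y(x_j)=A(-,x_j)$ and check that $\{\phi_j\}$ is a forward Cauchy net in $\CP A$. Since the Yoneda embedding is fully faithful, the Yoneda lemma gives $\CP A(\phi_j,\phi_k)=\sub_A(\y(x_j),\y(x_k))=\y(x_k)(x_j)=A(x_j,x_k)$, so the forward Cauchy condition $\bv_i\bw_{i\leq j\leq k}\CP A(\phi_j,\phi_k)=1$ for $\{\phi_j\}$ coincides with the condition $\bv_i\bw_{i\leq j\leq k}A(x_j,x_k)=1$ for $\{x_i\}$ in $A$. The fuzzy lower set named in the statement is then exactly $\phi=\bv_i\bw_{i\leq j}\phi_j$.

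Next I would invoke Proposition \ref{3.9}, which identifies $\phi=\bv_i\bw_{j\geq i}\phi_j$ as a Yoneda limit of $\{\phi_j\}$ in $\CP A$: for every fuzzy lower set $\lambda$ of $A$,
\[\sub_A(\phi,\lambda)=\bv_i\bw_{j\geq i}\sub_A(\phi_j,\lambda).\]
Specializing $\lambda=\y(y)$ and applying the Yoneda lemma once more, $\sub_A(\y(x_j),\y(y))=\y(y)(x_j)=A(x_j,y)$, yields
\[\sub_A(\phi,\y(y))=\bv_i\bw_{j\geq i}A(x_j,y).\]

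Finally I would read off the equivalence. By definition of supremum, $a$ is a supremum of $\phi$ iff $A(a,y)=\sub_A(\phi,\y(y))$ for all $y$, which by the previous display is precisely $A(a,y)=\bv_i\bw_{j\geq i}A(x_j,y)$ for all $y$ — and this is verbatim the defining condition for $a$ to be a Yoneda limit of $\{x_i\}$. Hence the two properties coincide. I do not anticipate a genuine obstacle: the only points requiring care are transporting the forward Cauchy property along $\y$ (which works because $\y$ preserves hom-values exactly) and correctly pairing the Yoneda lemma with the supremum formula; the whole computational weight is absorbed into Proposition \ref{3.9}. A direct attack bypassing Proposition \ref{3.9} would instead force one to interchange $\bv_i\bw_j$ with the meet over $z$ defining $\sub_A$, which is exactly the delicate step that Proposition \ref{3.9} has already settled, so routing through it is the efficient path.
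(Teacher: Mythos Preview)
Your argument is correct. Note, however, that the paper does not supply its own proof of this proposition: it merely cites \cite[Lemma 46]{FSW}. Your route through Proposition~\ref{3.9} is the natural one given the surrounding material---the Yoneda lemma reduces $\sub_A(\phi,\y(y))$ to $\bv_i\bw_{j\geq i}A(x_j,y)$ exactly as you write, and the two defining equations then coincide on the nose. The only step worth a word of care is the verification that $\{\y(x_i)\}$ is forward Cauchy in $\CP A$, which you handle correctly via $\CP A(\y(x_j),\y(x_k))=A(x_j,x_k)$; the rest is bookkeeping.
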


A fuzzy set $\lam:A\lra Q$ is \emph{inhabited}   if $\bv_{a\in A}\lam(a)=1$.  Inhabited fuzzy sets are counterpart of non-empty sets in the fuzzy setting.

\begin{defn}Let $A$ be a $\CQ$-ordered set, $\phi:A\lra Q$ a fuzzy lower set of $A$.   \begin{enumerate} \item[\rm(1)]
 $\phi $   is   a forward Cauchy ideal  if there exists a forward Cauchy net $\{x_i\}$ in $A$ such that \[\phi=\bv_i\bw_{i\leq j}A(-,x_j).\]
 \item[\rm(2)]
 $\phi$ is   a flat ideal if it is inhabited and is flat in the sense that  \[\phi\otimes(\psi_1\wedge\psi_2)= \phi\otimes\psi_1  \wedge\phi\otimes\psi_2   \] for all  fuzzy upper sets  $\psi_1, \psi_2$ of $A$.
 \item[\rm(3)]
 $\phi$  is   an irreducible ideal   if it is inhabited and is irreducible in the sense that \[{\rm sub}_A(\phi, \phi_1\vee\phi_2)={\rm sub}_A(\phi, \phi_1)\vee{\rm sub}_A(\phi, \phi_2)  \] for all  fuzzy lower sets  $\phi_1, \phi_2$ of $A$. \end{enumerate} \end{defn}

\begin{rem}\label{history}  Forward Cauchy ideals, flat ideals  and irreducible ideals in  $\CQ$-ordered sets are all natural extensions of ideals in a partially ordered set.
The study of forward Cauchy ideals dates back to  Wagner \cite{Wagner94,Wagner97}. For more information on forward Cauchy ideals the reader is referred to \cite{FK97,FS02,FSW,HW2011,HW2012,LZ07,ZF05}, besides the works of Wagner. The notion of flat ideals originates in the paper \cite{SV2005} of Vickers in the case that $\CQ$ is Lawvere's quantale $([0,\infty]^{\rm op},+)$, under the name of \emph{flat left module}. It is extended to the general case in \cite{TLZ2014}. It is shown in \cite{TLZ2014} that if the quantale $\CQ=(Q,\&)$ is a frame, i.e., $\&=\wedge$, then a fuzzy lower set $\phi$ of a $\CQ$-ordered set $A$ is flat if and only if   for any $x,y\in A$, \[\phi(x)\wedge\phi(y)\leq\bv_{z\in A}\phi(z)\wedge A(x,z)\wedge A(y,z).\] Hence, in the case that $\CQ=(Q,\&)$ is a frame,  flat ideals in a $\CQ$-ordered set $A$ coincides with  ideals of $A$ in the sense of \cite[Definition 5.1]{LZ07}. Irreducible ideals are introduced in \cite{Zhang18} in the study of sobriety of $\CQ$-cotopological spaces.
\end{rem}

\begin{exmp}\label{principal ideal} For each $a$ in a $\CQ$-ordered set $A$, the fuzzy lower set $A(-,a)$ is a forward Cauchy ideal, a flat ideal  and an irreducible ideal.
\end{exmp}

\begin{defn} (\cite{AK,KS05,LZ07}) \label{class} By a class of weights  we mean a functor
$\Phi: \CQ$-${\sf Ord} \lra\CQ$-{\sf Ord}  such that
\begin{enumerate}[(1)] \item for each $\CQ$-ordered set $A$, $\Phi(A)$ is a subset of $\CP A$ with the $\CQ$-order inherited from $\CP A$;
\item for all $\CQ$-ordered set $A$ and all $a\in A$, $\y(a)\in\Phi(A)$;
\item $\Phi(f)= \CP f =f^\ra$ for every $\CQ$-order preserving map $f: A\lra B$. \end{enumerate} \end{defn}

The second condition ensures that $A$ can be embedded in $\Phi(A)$ via the Yoneda embedding.
We also write $\y$ for the embedding $A\lra\Phi(A)$ if no
confusion will arise.

In category theory, a $\CQ$-distributor of the form $A\oto *$ is called  a \emph{weight} or a  \emph{presheaf}  \cite{KS05,St05}. This accounts for the terminology \emph{class of weights}.

Together with Example \ref{principal ideal} the following conclusion asserts that forward Cauchy ideals, flat ideals and irreducible ideals are all examples of class of weights.
\begin{prop}If $f:A\lra B$ is $\CQ$-order preserving, then for each forward Cauchy ideal (flat ideal,  irreducible ideal, resp.)  $\phi$   of $A$,   $f^\ra(\phi)$ is a forward Cauchy ideal (flat ideal, irreducible ideal, resp.) of $B$. \end{prop}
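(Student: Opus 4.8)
The plan is to treat the three classes separately, but to isolate first a common mechanism. Throughout, write $f^\la(\lam)=\lam\circ f$ for precomposition with $f$; this is defined on any fuzzy set, it restricts on fuzzy lower sets to the right adjoint of $f^\ra$ appearing in \eqref{kan adjunction}, and it carries fuzzy upper (lower) sets of $B$ to fuzzy upper (lower) sets of $A$ by order-preservation of $f$ together with transitivity. Since binary meets and joins of fuzzy sets are computed pointwise, $f^\la$ commutes with them: $f^\la(\lam_1\wedge\lam_2)=f^\la(\lam_1)\wedge f^\la(\lam_2)$ and likewise for $\vee$. I would also record at the outset that $f^\ra$ preserves inhabitedness, since $\bv_{y\in B}f^\ra(\phi)(y)\ge\bv_{x\in A}\phi(x)$ (take $y=f(x)$ in the defining supremum); this disposes of the inhabitedness clause in the flat and irreducible cases.

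The irreducible case is then immediate from the Kan adjunction \eqref{kan adjunction}. For fuzzy lower sets $\mu_1,\mu_2$ of $B$,
\begin{align*}
\sub_B(f^\ra(\phi),\mu_1\vee\mu_2) &= \sub_A(\phi,f^\la(\mu_1\vee\mu_2)) = \sub_A(\phi,f^\la(\mu_1)\vee f^\la(\mu_2))\\
&= \sub_A(\phi,f^\la(\mu_1))\vee\sub_A(\phi,f^\la(\mu_2)) = \sub_B(f^\ra(\phi),\mu_1)\vee\sub_B(f^\ra(\phi),\mu_2),
\end{align*}
where the outer equalities are \eqref{kan adjunction} and the middle one is irreducibility of $\phi$ applied to the fuzzy lower sets $f^\la(\mu_1),f^\la(\mu_2)$ of $A$. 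The flat case runs in exact parallel once I establish the tensor reciprocity identity $f^\ra(\phi)\otimes\chi=\phi\otimes f^\la(\chi)$ for every fuzzy upper set $\chi$ of $B$. This is a one-line computation: expanding $f^\ra(\phi)\otimes\chi=\bv_{y\in B}\bv_{x\in A}\phi(x)\&B(y,f(x))\&\chi(y)$ and using that $\chi$ is a fuzzy upper set gives $\bv_{y}B(y,f(x))\&\chi(y)=\chi(f(x))$, whence the identity. Feeding $\chi_1\wedge\chi_2$ through it, together with the pointwise compatibility of $f^\la$ with $\wedge$, transports flatness of $\phi$ to flatness of $f^\ra(\phi)$ by the same three-step chain as above.

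The forward Cauchy case is where the real work lies. Given a forward Cauchy net $\{x_i\}$ generating $\phi=\bv_i\bw_{i\le j}A(-,x_j)$, the natural candidate is $\{f(x_i)\}$, which is forward Cauchy because $A(x_j,x_k)\le B(f(x_j),f(x_k))$. The substance is the identity
\[
f^\ra(\phi)=\bv_i\bw_{i\le j}B(-,f(x_j)),
\]
which I would prove by two inequalities. For $\le$, distributing $\&$ over joins writes $f^\ra(\phi)(y)$ as $\bv_i\bv_{x\in A}\big(\bw_{i\le j}A(x,x_j)\big)\&B(y,f(x))$, and each term satisfies $\big(\bw_{i\le j}A(x,x_j)\big)\&B(y,f(x))\le A(x,x_k)\&B(y,f(x))\le B(y,f(x_k))$ for every $k\ge i$, hence $\le\bw_{k\ge i}B(y,f(x_k))$. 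For $\ge$, set $c_n=\bw_{n\le m\le m'}A(x_m,x_{m'})$; these increase with $n$, satisfy $\bv_n c_n=1$ by the forward Cauchy condition, and $c_n\le\phi(x_n)$ since $c_n\le A(x_n,x_m)$ for all $m\ge n$. Then for fixed $i$ and each $n\ge i$, taking $x=x_n$ in the defining supremum gives $f^\ra(\phi)(y)\ge\phi(x_n)\&B(y,f(x_n))\ge c_n\&\bw_{i\le j}B(y,f(x_j))$; joining over $n\ge i$ and using $\bv_{n\ge i}c_n=1$ yields $\bw_{i\le j}B(y,f(x_j))\le f^\ra(\phi)(y)$, and joining over $i$ gives the reverse inequality.

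I expect the forward Cauchy case to be the main obstacle. The flat and irreducible statements reduce, via \eqref{kan adjunction} and its tensor analogue together with the pointwise behaviour of $f^\la$, directly to the corresponding property of $\phi$; no analysis of the net is needed. For forward Cauchy ideals, by contrast, one must manipulate the join-of-meets formula defining $\phi$ and invoke the forward Cauchy condition to control the ``diagonal'' values $\phi(x_n)$, which is precisely the content of the $c_n\uparrow 1$ argument above. A more conceptual alternative would be to regard $\phi$ as the Yoneda limit in $\CP A$ of $\{\y(x_i)\}$ (Proposition \ref{3.9}) and argue that the left adjoint $f^\ra$ preserves this limit, but I would prefer the explicit computation, which is self-contained and avoids appealing to preservation of Yoneda limits by $f^\ra$.
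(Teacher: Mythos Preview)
Your argument is correct. For the irreducible case you reproduce exactly the paper's proof: apply the Kan adjunction \eqref{kan adjunction}, use that $f^\la$ is computed pointwise so commutes with $\vee$, and invoke irreducibility of $\phi$. The paper in fact proves only this case and leaves the other two to the reader.

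Your treatment therefore goes beyond the paper. For the flat case you establish the tensor reciprocity $f^\ra(\phi)\otimes\chi=\phi\otimes f^\la(\chi)$, which is the correct analogue of \eqref{kan adjunction} on the upper-set side; the computation via $\bv_y B(y,f(x))\&\chi(y)=\chi(f(x))$ is sound and the rest follows formally. For the forward Cauchy case your two-inequality argument is also correct: the $\le$ direction is immediate from $A(x,x_k)\&B(y,f(x))\le B(y,f(x_k))$, and the $\ge$ direction via $c_n=\bw_{n\le m\le m'}A(x_m,x_{m'})$ is the standard way to extract ``$\phi(x_n)$ is eventually close to $1$'' from the forward Cauchy condition. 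One small point worth making explicit is that the index set is directed, so $\{n:n\ge i\}$ is cofinal and hence $\bv_{n\ge i}c_n=\bv_n c_n=1$; you use this implicitly. Your remark about the alternative via Proposition~\ref{3.9} is apt but, as you say, would still need the explicit identification of the Yoneda limit of $\{\y_B(f(x_i))\}$ with $f^\ra(\phi)$, so the direct computation loses nothing.
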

\begin{proof} We check, for example, that if $\phi$ is irreducible then so is $f^\ra(\phi)$.  For all fuzzy lower sets $\phi_1, \phi_2$ of $B$,  thanks to Equation (\ref{kan adjunction}), we have
\begin{align*}\sub_B(f^\ra(\phi),\phi_1\vee\phi_2) &= \sub_A(\phi,(\phi_1\vee\phi_2)\circ f) \\ &= \sub_A(\phi,\phi_1\circ f)\vee\sub_A(\phi,\phi_2\circ f)\\ &= \sub_B(f^\ra(\phi),\phi_1)\vee\sub_B(f^\ra(\phi),\phi_2), \end{align*}   hence $f^\ra(\phi)$ is irreducible. \end{proof}

In the following, we write $\mathcal{W}$, $\CI$ and $\CF$ for the class of forward Cauchy ideals, irreducible ideals and flat ideals, respectively.

\begin{defn}Let $\Phi$ be a class of weights. A $\CQ$-ordered set $A$  is $\Phi$-complete\footnote{~\emph{$\Phi$-cocomplete} would be a better terminology from the viewpoint of category theory. However, following the tradition in domain theory, we choose \emph{$\Phi$-complete} here.} if  each $\phi\in\Phi(A)$ has a supremum. In particular, a $\CQ$-ordered set  $A$ is \begin{enumerate}[(1)] \item   Yoneda complete  (a.k.a liminf complete) if each forward Cauchy ideal of $A$ has a supremum (which is equivalent to that every forward Cauchy net in $A$ has a Yoneda limit); \item
  irreducible complete  if each irreducible ideal of $A$ has a supremum; \item
 flat complete  if each flat ideal of $A$ has a supremum.  \end{enumerate} \end{defn}

Yoneda complete, irreducible complete, and flat complete are all natural extension of \emph{directed complete} to the fuzzy setting.  In the case that $\CQ=(Q,\&)$ is a frame, based on flat completeness (under the name of \emph{fuzzy directed completeness}), a theory of frame-valued directed complete orders and frame-valued domains have been developed in  \cite{Liu-Zhao,Yao10,Yao16,YS11}.

It is easily seen that $A$ is $\Phi$-complete if and only if  $\y:A\lra\Phi(A)$ has a left adjoint. In this case, the left adjoint of $\y$ sends each $\phi\in\Phi(A)$ to its supremum $\sup\phi$.
A $\CQ$-order preserving  map $f:A\lra B$  is \emph{$\Phi$-cocontinuous} if  for all $\phi\in\Phi(A)$,  $f(\sup\phi)=\sup f^\ra(\phi)$ whenever $\sup\phi$ exists.

This section mainly concerns the relationship among the class $\mathcal{W}$ of forward Cauchy ideals, the class $\CI$ of irreducible ideals, and the class $\CF$ of flat ideals.

Given classes of weights $\Phi$ and $\Psi$, we say that $\Phi$ is a subclass of $\Psi$ if $\Phi(A)\subseteq \Psi(A)$ for each $\CQ$-ordered set $A$. As we shall see, under some mild assumptions, $\mathcal{W}$ is a subclass of both $\CI$ and $\CF$.

A complete lattice $L$ is meet continuous \cite{Gierz2003} if for all $a\in L$ and all directed subset $D$ of $L$,  \[a\wedge \bv D=\bv_{d\in D}(a\wedge d).\] A complete lattice is dually meet continuous if its opposite   is meet continuous.
A quantale $\CQ=(Q,\&)$ is (dually, resp.) meet continuous if the complete lattice $Q$ is (dually, resp.) meet continuous.

 \begin{thm}\label{FC is irreducible} For a dually meet continuous quantale  $\CQ$, every forward Cauchy ideal is  irreducible.  \end{thm}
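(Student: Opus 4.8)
The plan is to translate the irreducibility identity into an assertion about $\liminf$'s of quantale-valued nets and then to combine two observations: that the nets coming from a fuzzy lower set along a forward Cauchy net are forced to be \emph{almost decreasing} (so that $\liminf=\limsup$), and that dual meet continuity is exactly what lets a join pass through a codirected meet. To set this up, let $\phi=\bv_i\bw_{j\geq i}A(-,x_j)$ be generated by a forward Cauchy net $\{x_i\}$. By the Yoneda lemma $\CP A(\y(x_j),\y(x_k))=A(x_j,x_k)$, so $\{\y(x_i)\}$ is forward Cauchy in $\CP A$ with Yoneda limit $\phi$, and Proposition \ref{3.9} gives, for every fuzzy lower set $\lam$ of $A$,
\[\sub_A(\phi,\lam)=\bv_i\bw_{j\geq i}\sub_A(\y(x_j),\lam)=\bv_i\bw_{j\geq i}\lam(x_j).\]
Inhabitedness of $\phi$ is immediate, since $\bv_a\phi(a)\geq\bv_i\bw_{j\geq i}A(x_i,x_j)\geq\bv_i\bw_{i\leq j\leq k}A(x_j,x_k)=1$. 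Writing $a_j=\phi_1(x_j)$ and $b_j=\phi_2(x_j)$ and using $(\phi_1\vee\phi_2)(x_j)=a_j\vee b_j$, the irreducibility condition reduces to
\[\bv_i\bw_{j\geq i}(a_j\vee b_j)=\Big(\bv_i\bw_{j\geq i}a_j\Big)\vee\Big(\bv_i\bw_{j\geq i}b_j\Big),\]
whose $\geq$ direction is automatic from monotonicity, so the whole content lies in the reverse inequality.

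The decisive lemma I would prove next is that for any fuzzy lower set $\lam$ the net $a_j=\lam(x_j)$ satisfies $\bv_i\bw_{j\geq i}a_j=\bw_i\bv_{j\geq i}a_j$, i.e.\ its $\liminf$ equals its $\limsup$. Put $c_i=\bw_{i\leq j\leq k}A(x_j,x_k)$, so that $\bv_ic_i=1$. The lower-set inequality $\lam(x_k)\&A(x_j,x_k)\leq\lam(x_j)$ together with $A(x_j,x_k)\geq c_i$ yields $a_k\&c_i\leq a_j$ whenever $i\leq j\leq k$; this is the precise sense in which $\{a_j\}$ is almost decreasing. Taking the supremum over $k\geq j$ and distributing $\&$ over joins gives $(\bv_{k\geq j}a_k)\&c_i\leq a_j$ for $j\geq i$, hence $M\&c_i\leq a_j$ where $M=\bw_i\bv_{j\geq i}a_j$; applying $\bw_{j\geq i}$ and then $\bv_i$, and using $\bv_i(M\&c_i)=M\&\bv_ic_i=M$, produces $\bv_i\bw_{j\geq i}a_j\geq M$. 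Since the reverse inequality is general, equality follows. Notably this step uses only the quantale axioms, not dual meet continuity.

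With $\liminf=\limsup$ in hand, I would finish by rewriting both sides as codirected meets of tail suprema and invoking dual meet continuity. Set $u_i=\bv_{j\geq i}a_j$ and $v_i=\bv_{j\geq i}b_j$; these are antitone in $i$, so $\{u_i\}$ and $\{v_i\}$ are codirected subsets of $Q$. The lemma, applied to $\phi_1$, $\phi_2$ and to the lower set $\phi_1\vee\phi_2$, turns the left-hand side into $\bw_i(u_i\vee v_i)$ and the two $\liminf$'s on the right into $\bw_iu_i$ and $\bw_iv_i$. Dual meet continuity, used twice on the codirected families, gives $(\bw_iu_i)\vee(\bw_iv_i)=\bw_i\bw_j(u_i\vee v_j)$; and because the index is directed and $u,v$ are antitone, the diagonal meet $\bw_i(u_i\vee v_i)$ coincides with the full meet $\bw_i\bw_j(u_i\vee v_j)$ (given $(i,j)$, choose $k\geq i,j$, so $u_k\vee v_k\leq u_i\vee v_j$). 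Hence both sides equal $\bw_i\bw_j(u_i\vee v_j)$ and the identity holds.

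The step I expect to be the main obstacle is precisely the conversion of the defining $\liminf$ into a $\limsup$: the inequality we need is the direction that genuinely fails for arbitrary nets, and it is salvaged only by the forced almost-monotonicity of $\{\lam(x_j)\}$ coming from forward-Cauchyness together with the lower-set condition. Once the two sides are presented as codirected meets of tail suprema, dual meet continuity is exactly the hypothesis that distributes the join across them --- consistent with dual meet continuity being not merely sufficient but, as the section's main result records, necessary for the conclusion.
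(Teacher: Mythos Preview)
Your argument is correct and follows the paper's three-step structure: inhabitedness, the identity $\sub_A(\varphi,\lam)=\bv_i\bw_{j\geq i}\lam(x_j)$ via Proposition~\ref{3.9} and Yoneda, and then dual meet continuity to commute the binary join past the codirected meet of tail suprema (your diagonal argument with $u_i,v_i$ makes explicit what the paper compresses into a single line in its Step~3). The one substantive difference is how you obtain $\bv_i\bw_{j\geq i}\lam(x_j)=\bw_i\bv_{j\geq i}\lam(x_j)$. The paper factors this out as Lemma~\ref{order convergence}: it notes that $\{\lam(x_j)\}$ is forward Cauchy in $(Q,d_R)$, shows that any such net has $\bw_i\bv_{j\geq i}a_j$ as a Yoneda limit, and deduces the equality from that. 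Your direct computation---using $c_i=\bw_{i\leq j\leq k}A(x_j,x_k)$, the lower-set inequality $a_k\&c_i\leq a_j$, and distributivity of $\&$ over joins---is shorter and entirely elementary, bypassing the Yoneda-limit detour; the paper's route has the compensating advantage of isolating a reusable standalone statement about forward Cauchy nets in $(Q,d_R)$.
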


 \begin{lem}\label{order convergence} If $\{a_i\}$ is a forward Cauchy net in the $\CQ$-ordered set $(Q,d_R)$, then $ \bw_i\bv_{j\geq i}a_j$ is a Yoneda limit of $\{a_i\}$ and \[\bv_i\bw_{j\geq i}a_j=\bw_i\bv_{j\geq i}a_j.\] \end{lem}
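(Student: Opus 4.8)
The plan is to prove the two assertions in sequence, paralleling the proof of Lemma \ref{yoneda limit in Q} but working in $(Q,d_R)$ rather than $(Q,d_L)$, so that the roles of $\bv$ and $\bw$ are interchanged. Throughout I abbreviate $a:=\bw_i\bv_{j\geq i}a_j$.

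For the first assertion I would avoid reproducing Wagner's net-theoretic argument and instead deduce the Yoneda limit from the supremum characterization of Proposition \ref{yoneda limit as suprema}: the element $a$ is a Yoneda limit of $\{a_i\}$ as soon as it is the supremum, computed in $(Q,d_R)$, of the fuzzy lower set $\phi:=\bv_i\bw_{j\geq i}d_R(-,a_j)$ generated by the net. So the task reduces to evaluating $\sup\phi$. Using the formula for suprema in $(Q,d_R)$ from Example \ref{inclusion as sup}, namely $\sup\phi=\bw_{q}(\phi(q)\ra q)$, I would first simplify $\phi(q)=\bv_i\bw_{j\geq i}(a_j\ra q)$. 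By Proposition \ref{2.1}(5) the inner meet collapses, $\bw_{j\geq i}(a_j\ra q)=(\bv_{j\geq i}a_j)\ra q$, so writing $b_i:=\bv_{j\geq i}a_j$ gives $\phi(q)=\bv_i(b_i\ra q)$; a second application of Proposition \ref{2.1}(5) yields $\phi(q)\ra q=\bw_i((b_i\ra q)\ra q)$. Interchanging the two meets and invoking the double-residuation identity $\bw_{q}((b_i\ra q)\ra q)=b_i$ of Proposition \ref{2.1}(7) then gives $\sup\phi=\bw_i b_i=\bw_i\bv_{j\geq i}a_j=a$, as required.

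For the second assertion I would exploit the defining identity of the Yoneda limit just obtained, $y\ra a=\bv_i\bw_{j\geq i}(y\ra a_j)$ for every $y\in Q$. By Proposition \ref{2.1}(6) the inner meet is $\bw_{j\geq i}(y\ra a_j)=y\ra\bw_{j\geq i}a_j$, so the right-hand side equals $\bv_i(y\ra c_i)$ with $c_i:=\bw_{j\geq i}a_j$; since $c_i\leq\bv_i c_i=\bv_i\bw_{j\geq i}a_j$, monotonicity of $y\ra(-)$ gives $y\ra a\leq y\ra(\bv_i\bw_{j\geq i}a_j)$ for all $y$. Specializing $y=a=\bw_i\bv_{j\geq i}a_j$ makes the left-hand side $a\ra a=1$, whence $\bw_i\bv_{j\geq i}a_j\leq\bv_i\bw_{j\geq i}a_j$ by Proposition \ref{2.1}(2). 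As the reverse inequality $\bv_i\bw_{j\geq i}a_j\leq\bw_i\bv_{j\geq i}a_j$ holds trivially, the claimed equality follows.

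The main obstacle is the explicit evaluation of $\sup\phi$ in the first step: one must recognize that, after two applications of Proposition \ref{2.1}(5), the expression telescopes into the form $\bw_q((b_i\ra q)\ra q)$ to which Proposition \ref{2.1}(7) applies and simplifies everything back to $\bw_i b_i$. I expect the forward Cauchy hypothesis to be used only implicitly, through Proposition \ref{yoneda limit as suprema}, which is precisely what licenses passing from ``$a$ is the supremum of the generated fuzzy lower set'' to ``$a$ is a Yoneda limit''; the supremum computation itself, and hence the identity $y\ra a=\bv_i(y\ra c_i)$ driving the second step, requires no further assumptions on $\CQ$ (in particular, dual meet continuity is not needed for this lemma).
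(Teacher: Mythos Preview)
Your argument is correct and takes a genuinely different route from the paper. The paper verifies the Yoneda-limit identity $d_R(a,y)=\bv_i\bw_{j\geq i}d_R(a_j,y)$ directly by proving both inequalities, with the nontrivial direction (``$\leq$'') consuming the forward Cauchy hypothesis explicitly: from $\bv_i\bw_{i\leq j\leq l}(a_l\ra a_j)=1$ the paper extracts $\bv_i\bw_{j\geq i}\big[(\bw_k\bv_{l\geq k}a_l)\ra a_j\big]=1$ and then composes with $x\ra\bw_k\bv_{l\geq k}a_l$. Your route instead reduces the first assertion to a pure supremum computation via Proposition~\ref{yoneda limit as suprema} and the explicit formula of Example~\ref{inclusion as sup}; the residuation identities of Proposition~\ref{2.1}(5)--(7) do all the work, and the Cauchy hypothesis is invoked only once, abstractly, at the point where you quote Proposition~\ref{yoneda limit as suprema}. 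For the second assertion both proofs are essentially the same---deduce $\bw_i\bv_{j\geq i}a_j\leq\bv_i\bw_{j\geq i}a_j$ from the Yoneda-limit identity---though the paper specializes to $x=1$ rather than $y=a$, which is marginally shorter since $1\ra a=a$ immediately. Your approach is cleaner and makes the location of the Cauchy condition transparent; the paper's approach has the compensating virtue of being self-contained, not relying on Proposition~\ref{yoneda limit as suprema} (an external citation to \cite{FSW}) or on the supremum formula from Example~\ref{inclusion as sup}.
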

 \begin{proof}First, we show that $\bw_i\bv_{j\geq i}a_j$ is a Yoneda limit of $\{a_i\}$. That is, for all $x\in Q$, \[d_R\Big(\bw_i\bv_{j\geq i}a_j,x\Big)=\bv_i\bw_{j\geq i}d_R(a_j,x).\]

On one hand, since $\{a_i\}$ is a forward Cauchy net in $(Q,d_R)$,  \[\bv_i\bw_{i\leq j\leq l}(a_l\ra a_j)=\bv_i\bw_{i\leq j\leq l}d_R(a_j,a_l)=1,\] then
 \[\bv_i\bw_{j\geq i}\Big[\bv_k\bw_{l\geq k}(a_l\ra a_j)\Big]=1,\] hence  \[\bv_i\bw_{j\geq i}\Big[\Big(\bw_k\bv_{l\geq k}a_l\Big)\ra a_j\Big]=1.\] Thus, \begin{align*}d_R\Big(\bw_i\bv_{j\geq i}a_j,x\Big)&= \Big[x\ra\bw_k\bv_{l\geq k}a_l\Big]\&\bv_i\bw_{j\geq i}\Big[\Big(\bw_k\bv_{l\geq k}a_l\Big)\ra a_j)\Big]\\ &\leq  \bv_i\bw_{j\geq i}(x\ra a_j) \\ &=\bv_i\bw_{j\geq i}d_R(a_j,x). \end{align*}

 On the other hand, since for each $i$ we always have \[x\ra \bv_{j\geq i}a_j \geq \bv_k\bw_{l\geq k}(x\ra a_l),\] it follows that \[d_R\Big(\bw_i\bv_{j\geq i}a_j,x\Big)=\bw_i\Big(x\ra\bv_{j\geq i}a_j\Big)\geq \bv_k\bw_{l\geq k}(x\ra a_l)=\bv_i\bw_{j\geq i}d_R(a_j,x) . \]

 Therefore, $\bw_i\bv_{j\geq i}a_j$ is a Yoneda limit of $\{a_i\}$.

 Next, we prove the equality \[\bv_i\bw_{j\geq i}a_j=\bw_i\bv_{j\geq i}a_j.\]

 Since $\bw_i\bv_{j\geq i}a_j$ is a Yoneda limit of $\{a_i\}$ in $(Q,d_R)$, it follows that for all $x\in Q$, \begin{align*} x\ra \bw_i\bv_{j\geq i}a_j& = d_R\Big(\bw_i\bv_{j\geq i}a_j,x\Big)\\ &=\bv_i\bw_{j\geq i}d_R(a_j,x) \\ &=\bv_i\bw_{j\geq i}(x\ra a_j) \\ &\leq x\ra \bv_i\bw_{j\geq i}a_j. \end{align*} Letting $x=1$, we obtain that \[\bw_i\bv_{j\geq i}a_j\leq \bv_i\bw_{j\geq i}a_j.\]

The converse inequality is trivial, so the equality is valid. \end{proof}

Lemma \ref{yoneda limit in Q} and Lemma \ref{order convergence} imply that every forward Cauchy net in the $\CQ$-ordered sets $(Q,d_L)$ and $(Q,d_R)$ is order convergent. But,  $(Q,d_L)$ and $(Q,d_R)$ may have different forward Cauchy nets. For example, the sequence  $\{n\}$ is forward Cauchy in $([0,\infty],d_R)$ but not in $([0,\infty],d_L)$.

 \begin{proof}[Proof of Theorem \ref{FC is irreducible}] Let $\{x_i\}$ be a forward Cauchy net in a $\CQ$-ordered set $A$ and $\varphi=\bv_i\bw_{j\geq i}A(-,x_j)$. We  show that $\varphi$ is an irreducible ideal.

 \textbf{Step 1}. $\varphi$ is inhabited. This is easy since \[\bv_{x\in A}\varphi(x)\geq\bv_{i}\varphi(x_i)=\bv_{i}\bv_j\bw_{k\geq j}A(x_i,x_k)\geq \bv_{i} \bw_{k\geq i}A(x_i,x_k)=1.\]

\textbf{Step 2}. For each fuzzy lower set $\phi$ of $A$, \[\sub_A(\varphi,\phi)= \bw_i\bv_{j\geq i}\phi(x_j).\]

Since $\phi$ is a fuzzy lower set, $\{\phi(x_j)\}$ is a forward Cauchy net in $(Q, d_R)$.
Then, \begin{align*}\sub_A(\varphi,\phi)&= \sub_A\Big(\bv_i\bw_{j\geq i}A(-,x_j),\phi\Big) \\ &
 =\bv_i\bw_{j\geq i}\sub_A(A(-,x_j),\phi) & \text{(Proposition \ref{3.9})}\\ &= \bv_i\bw_{j\geq i}\phi(x_j)& \text{(Yoneda lemma)}\\ &= \bw_i\bv_{j\geq i}\phi(x_j).& \text{(Lemma \ref{order convergence})} \end{align*}

\textbf{Step 3}. For all  fuzzy lower sets  $\phi_1, \phi_2$ of $A$, ${\rm sub}_A(\varphi, \phi_1\vee\phi_2)={\rm sub}_A(\varphi, \phi_1)\vee{\rm sub}_A(\varphi, \phi_2)$.

This is easy since   \begin{align*}{\rm sub}_A(\varphi, \phi_1)\vee{\rm sub}_A(\varphi, \phi_2)&= \bw_i\bv_{j\geq i}\phi_1(x_j)\vee \bw_i\bv_{j\geq i}\phi_2(x_j)& \text{(Step 2)}\\ & = \bw_i\bv_{j\geq i}(\phi_1(x_j)\vee\phi_2(x_j)) & \text{($\CQ$ is dually meet continuous)}\\&= {\rm sub}_A(\varphi, \phi_1\vee\phi_2).& \text{(Step 2)}\end{align*}

The proof is completed.
\end{proof}

Interestingly, the dual meet continuity of $\CQ$ is also necessary for Theorem \ref{FC is irreducible}.

\begin{prop}\label{DMC is necessary} If all forward Cauchy ideals are irreducible, then the quantale $\CQ$ is dually meet continuous. \end{prop}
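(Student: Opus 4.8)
The plan is to prove the contrapositive: assuming $\CQ$ fails to be dually meet continuous, I will produce a forward Cauchy ideal that is not irreducible, contradicting the hypothesis. Failure of dual meet continuity means there is some $a\in Q$ and some down-directed set $D\subseteq Q$ with $\big(\bw D\big)\vee a<\bw_{d\in D}(d\vee a)$ (the inequality $\leq$ always holds, so the failure is a strict $<$). Regarding $D$ as a decreasing net $\{d_\ell\}$ indexed by $(D,\geq)$, this reads $\big(\bw_\ell d_\ell\big)\vee a<\bw_\ell(d_\ell\vee a)$. The whole point is to engineer a single forward Cauchy ideal whose irreducibility equation, tested against two carefully chosen fuzzy lower sets, is \emph{exactly} the dual-meet-continuity identity for $D$ and $a$.

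First I would work inside the $\CQ$-ordered set $A=(Q,d_R)$ and take the net $x_\ell=d_\ell$. Since the net is decreasing, for $\ell\leq m\leq n$ we have $d_n\leq d_m$, whence $d_R(x_m,x_n)=d_n\ra d_m=1$ by Proposition \ref{2.1}(2); thus $\bv_\ell\bw_{\ell\leq m\leq n}d_R(x_m,x_n)=1$ and $\{x_\ell\}$ is forward Cauchy. Let $\varphi=\bv_\ell\bw_{m\geq\ell}d_R(-,x_m)$ be the forward Cauchy ideal it generates; by hypothesis $\varphi$ is irreducible.

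Next I choose the two test lower sets $\phi_1=\id\colon(Q,d_R)\lra Q$ and $\phi_2\equiv a$ the constant map. The former is a fuzzy lower set because $y\&(y\ra x)\leq x$ (Proposition \ref{2.1}(4)), the latter because $\CQ$ is integral, so $a\& p\leq a$ for all $p$; and $\phi_1\vee\phi_2$ is again a fuzzy lower set since the defining inequality is preserved by pointwise joins. The key computational input is the identity $\sub_A(\varphi,\phi)=\bw_\ell\bv_{m\geq\ell}\phi(x_m)$ established in Step 2 of the proof of Theorem \ref{FC is irreducible}, which holds for \emph{every} forward Cauchy net over \emph{any} quantale (it uses only Proposition \ref{3.9}, the Yoneda lemma, and Lemma \ref{order convergence}, none of which presuppose dual meet continuity). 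Applying it, and using $\bv_{m\geq\ell}d_m=d_\ell$ for the decreasing net, I obtain $\sub_A(\varphi,\phi_1)=\bw_\ell d_\ell$, $\sub_A(\varphi,\phi_2)=a$, and $\sub_A(\varphi,\phi_1\vee\phi_2)=\bw_\ell(d_\ell\vee a)$. Irreducibility of $\varphi$ then forces $\bw_\ell(d_\ell\vee a)=(\bw_\ell d_\ell)\vee a$, contradicting the strict inequality above; this completes the argument.

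The step I expect to require the most care is guarding against circularity and any hidden use of meet continuity: I must confirm that the formula $\sub_A(\varphi,\phi)=\bw_\ell\bv_{m\geq\ell}\phi(x_m)$ is genuinely valid in an arbitrary integral commutative quantale, and that reducing an arbitrary down-directed witness to a decreasing net legitimately collapses each tail supremum $\bv_{m\geq\ell}d_m$ to $d_\ell$ while preserving both $\bw_\ell d_\ell=\bw D$ and $\bw_\ell(d_\ell\vee a)=\bw_{d\in D}(d\vee a)$. The remaining checks (that the three maps are fuzzy lower sets) are routine.
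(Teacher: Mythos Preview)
Your proposal is correct and takes essentially the same approach as the paper: both work in $(Q,d_R)$, use a filtered (down-directed) set to generate a forward Cauchy ideal, test irreducibility against $\id_Q$ and the constant map $\underline{a}$, and read off the dual meet continuity identity from the irreducibility equation via the formula $\sub_A(\varphi,\phi)=\bw_\ell\bv_{m\geq\ell}\phi(x_m)$ of Step~2 in Theorem~\ref{FC is irreducible}. The only cosmetic difference is that the paper phrases it as a direct proof (deriving $a\vee\bw F=\bw_{x\in F}(a\vee x)$ for arbitrary $a$ and $F$) rather than a contrapositive.
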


\begin{proof}We show that for each $a\in Q$ and each  filtered set $F$ in $Q$, \[a\vee\bw_{x\in F} x=\bw_{x\in F}(a\vee x).\]

Consider the fuzzy lower set $\phi =\bv_{x\in F}d_R(-,x)$ of the $\CQ$-ordered set $(Q, d_R)$. Since \[\phi  =\bv_{x\in F}\bw_{y\in F,y\leq x}d_R(-,y),\] it follows that $\phi$ is a forward Cauchy ideal, hence an irreducible ideal by assumption.

Since both the identity map ${\rm id}_Q$ on $Q$ and the constant map $\underline{a}:Q\lra Q$ with value $a$  are fuzzy lower sets of  $(Q, d_R)$, then \begin{align*}a\vee\bw_{x\in F} x&=\sub_Q(\phi,\underline{a})\vee\sub_Q(\phi,{\rm id}_Q) \\ &=\sub_Q(\phi,\underline{a}\vee{\rm id}_Q) \\ &= \bw_{x\in F}(a\vee x).\end{align*} This finishes the proof. \end{proof}

Irreducible ideals need not be forward Cauchy  in general. Let $\CQ=\{0,a,b,1\}$ be the Boolean algebra with four elements. Assume that $A$ is the $\CQ$-ordered set with  points $x,y$  and \[A(x,x)=A(y,y)=1, \quad A(x,y)=A(y,x)=0.\] Then  the map $\phi$, given by $\phi(x)=a$ and $\phi(y)=b$, is an irreducible ideal in $A$. But, $\phi$ cannot be generated by any forward Cauchy net in $A$. This example is essentially \cite[Note 3.12]{Zhang18}.

The following conclusion is very useful in the theory of fuzzy orders based on left continuous t-norms, it says every irreducible ideal is forward Cauchy in this case.

\begin{thm}\label{G is FC} If the quantale $\CQ$ is the unit interval equipped with a left continuous t-norm $\&$, then irreducible ideals coincide with forward Cauchy ideals. \end{thm}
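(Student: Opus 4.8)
The plan is to prove the two inclusions $\mathcal{W}(A)\subseteq\CI(A)$ and $\CI(A)\subseteq\mathcal{W}(A)$ separately. The first is immediate: the unit interval with its usual order is completely distributive, hence in particular dually meet continuous, so Theorem~\ref{FC is irreducible} already tells us that every forward Cauchy ideal of $A$ is irreducible. All the work therefore lies in the converse, namely that every irreducible ideal $\phi$ is forward Cauchy.

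My first step toward the converse would be to upgrade irreducibility to flatness, exploiting that $[0,1]$ is a chain. On a chain one has the pointwise identity $(\alpha\wedge\beta)\ra p=(\alpha\ra p)\vee(\beta\ra p)$, and combining this with the formula $\phi\otimes\psi=\bw_{p}(\sub_A(\phi,\psi\ra p)\ra p)$ of Lemma~\ref{tensor via sub}(1), the identity $(\alpha\vee\beta)\ra p=(\alpha\ra p)\wedge(\beta\ra p)$ of Proposition~\ref{2.1}(5), and the irreducibility of $\phi$, a short calculation gives $\phi\otimes(\psi_1\wedge\psi_2)=(\phi\otimes\psi_1)\wedge(\phi\otimes\psi_2)$ for all fuzzy upper sets $\psi_1,\psi_2$; that is, $\phi$ is flat. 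Taking $\psi_1=A(a,-)$ and $\psi_2=A(b,-)$ and using $\phi\otimes A(a,-)=\phi(a)$ then yields the key directedness identity $\bv_{c}\phi(c)\&A(a,c)\&A(b,c)=\phi(a)\wedge\phi(b)$, together with its $n$-ary analogue; recall also that $\phi$ is inhabited.

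The heart of the proof is then to manufacture from this data a forward Cauchy net $\{x_i\}$ with $\phi=\bv_i\bw_{j\geq i}A(-,x_j)$. Since $\phi$ is inhabited, $\bv_a\phi(a)=1$, so I can fix rationals $r_n\uparrow 1$ and choose, by recursion, elements $x_n$ whose $\phi$-values tend to $1$ and which are ``almost increasing'', in the sense that $A(x_m,x_n)$ is forced to be large for $m\le n$. At each stage the $n$-ary directedness identity, applied to the finitely many previously chosen elements, supplies a single new element lying above all of them to a prescribed degree while still carrying large membership; dependent choice then produces the net, and by construction $\bv_i\bw_{i\le j\le k}A(x_j,x_k)=1$, so $\{x_i\}$ is forward Cauchy.

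The hard part will be to calibrate this recursion so that the resulting net regenerates $\phi$ exactly. The delicate point is that along any genuinely increasing net the membership in a fuzzy lower set is essentially non-increasing (from $\phi(x_n)\&A(x_m,x_n)\le\phi(x_m)$), so one can push the memberships up to $1$ only by letting the net be forward Cauchy rather than monotone, with the Cauchy defects shrinking slower than the membership deficits grow. Assuming the calibration is arranged, the identity $\varphi=\phi$ for $\varphi:=\bv_i\bw_{j\geq i}A(-,x_j)$ can be checked through subset degrees: exactly as in the proof of Theorem~\ref{FC is irreducible} (via Proposition~\ref{3.9}, the Yoneda lemma and Lemma~\ref{order convergence}) one has $\sub_A(\varphi,\lambda)=\bw_i\bv_{j\geq i}\lambda(x_j)$ for every fuzzy lower set $\lambda$, and I would verify $\varphi(a)=\bv_i\bw_{j\geq i}A(a,x_j)=\phi(a)$ for each $a$, the inequality ``$\le$'' coming from the lower-set inequality above together with $\phi(x_j)\to 1$ and left continuity of $\&$, and ``$\ge$'' from the cofinality built into the recursion. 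It is precisely this last cofinality/calibration that uses left continuity and the chain structure in an essential way, and it is there that I expect the main technical difficulty to lie.
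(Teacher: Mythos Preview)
Your proposal has a genuine gap at exactly the point you flag as ``the main technical difficulty'', and in fact the route you outline cannot be completed as stated. After passing from irreducibility to flatness, everything you subsequently invoke --- the directedness identity, inhabitedness, left continuity of $\&$, the chain structure of $[0,1]$ --- is a consequence of flatness alone; irreducibility plays no further role. If your recursion-and-calibration argument went through from those ingredients, it would prove that every flat ideal is forward Cauchy for \emph{every} left continuous t-norm on $[0,1]$. But this is false: for $\&=\min$ the map $\phi(x)=(1-x)\vee\tfrac12$ on $([0,1],d_L)$ is flat yet not forward Cauchy (see the example preceding Theorem~\ref{main}, and Theorem~\ref{main} itself). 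Running your recursion on this $\phi$ forces $\phi(x_n)\to 1$, hence $x_n\to 0$, and the generated ideal $\varphi$ then satisfies $\varphi(a)=0$ for every $a>0$, whereas $\phi(a)\geq\tfrac12$ throughout. Thus the promised ``cofinality built into the recursion'' cannot deliver $\varphi(a)\geq\phi(a)$: a sequence chosen only against its own earlier terms has no mechanism for seeing an arbitrary point $a$.

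The paper's argument uses irreducibility in a sharper way that does not factor through flatness. It takes as index set the preorder ${\rm C}\phi=\{(x,r)\in A\times[0,1):\phi(x)>r\}$ with $(x,r)\sqsubseteq(y,s)\iff (A(x,y)\to r)\leq s$, and the projection net $(x,r)\mapsto x$. Directedness of ${\rm C}\phi$ is obtained by applying irreducibility to the fuzzy lower sets $A(x,-)\to r$ and $A(y,-)\to s$; this step genuinely fails for the merely-flat example above. The payoff is that the troublesome inequality becomes trivial: since every pair $(a,r)$ with $r<\phi(a)$ is itself an index, the tail above $(a,r)$ automatically satisfies $A(a,x_j)>r$, giving $\varphi(a)\geq\phi(a)$ with no calibration at all. (A minor aside: what flatness actually yields is $\bigvee_{c}\phi(c)\&\bigl(A(a,c)\wedge A(b,c)\bigr)=\phi(a)\wedge\phi(b)$, with $\wedge$ rather than $\&$ between the hom-terms.)
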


\begin{proof} By Theorem \ref{FC is irreducible}, we only need to prove that every   irreducible ideal $\phi $ of a $\CQ$-ordered set $A$ is   forward Cauchy.
 	
Let \[{\rm C}\phi = \{(x,r)\in X\times[0,1)\mid \phi(x)>r\}.\] Define a relation $\sqsubseteq$ on ${\rm C}\phi $ by \[ (x,r)\sqsubseteq(y,s)\iff A(x,y)\ra r \leq s. \]
We claim that
$({\rm C}\phi,\sqsubseteq)$  is a directed set. Before proving this, we note that if $(x,r)\sqsubseteq(y,s)$ then $r<A(x,y) $ and $r\leq s$.  That $\sqsubseteq$ is reflexive and transitive is easy, it remains to check that it is directed. For any $ (x,r),(y,s)\in {\rm C}\phi$,  consider the fuzzy lower sets $\psi_1=A(x,-)\ra r$ and $ \psi_2=A(y,-)\ra s$. Since $\phi $ is an irreducible ideal,   \begin{align*}
 		\sub_X(\phi,\psi_1\vee\psi_2)
 &=\sub_X(\phi,A(x,-)\ra r)\vee \sub_X(\phi,A(y,-)\ra s)                            	\\
 &= \sub_X(A(x,-),\phi\ra r)\vee \sub_X(A(y,-),\phi\ra s)\\ &=(\phi(x)\ra r)\vee(\phi(y)\ra s).
 	\end{align*}
Since $(\phi(x)\ra r)\vee(\phi(y)\ra s)<1$,   there exists some $z$ such that \[ \phi(z)\ra\big[(A(x,z)\ra r)\vee(A(y,z)\ra s)\big] <1. \] Let $ t=(A(x,z)\ra r)\vee(A(y,z)\ra s)$, then  $(z,t)\in {\rm C}\phi$ and $(x,r)\sqsubseteq(z,t), (y,s)\sqsubseteq(z,t)$. Hence $({\rm C}\phi,\sqsubseteq)$  is a directed set.	

From now on, we also write an element  in ${\rm C}\phi$ as a pair $(x_i,r_i)$. Define a net \[\mathfrak{x}:{\rm C}\phi\lra A\] by $\mathfrak{x}(x_i,r_i)=x_i.$
We prove in two steps that $\mathfrak{x}$ is a forward Cauchy net and	it generates $\phi$, hence $\phi$ is a forward Cauchy ideal.
 	
\textbf{Step 1}. $\mathfrak{x}$ is   forward Cauchy.

Let $t<1$. Since $\phi$ is inhabited, there is some $(x_i,r_i)\in {\rm C}\phi$ such that $t\leq r_i$. Then   $A(x_j,x_k)\ra r_j\leq r_k<1$ whenever $(x_k,r_k)\sqsupseteq(x_j, r_j)\sqsupseteq(x_i,r_i)$,   hence \[A(x_j,x_k)> r_j\geq r_i\geq t.\] By arbitrariness of $t$ we obtain that $\mathfrak{x}$ is forward Cauchy.

\textbf{Step 2}. $\phi$ is generated by   $\mathfrak{x}$, i.e., \[ \phi(x)=\bv_{(x_i,r_i)}\bw_{(x_j,r_j) \sqsupseteq (x_i,r_i)}A(x,x_{j}) \] for all $x\in A$.
 	
Take $x\in A$ and $r<\phi(x)$. For all $(x_j,r_j)\in  {\rm C}\phi$, if $(x,r)\sqsubseteq(x_j,r_j)$, then $A(x,x_j)> r$, hence, by arbitrariness of $r$, \[ \phi(x)\leq\bv_{r<\phi(x)}\bw_{(x_j,r_j) \sqsupseteq (x,r)}A(x,x_{j})\leq\bv_{(x_i, r_i)}\bw_{(x_j,r_j) \sqsupseteq (x_i,r_i)}A(x,x_{j}). \]
 	
For the converse inequality, we show that for each $(x_i,r_i)\in{\rm C}\phi$,    \[ \phi(x)\geq \bw_{(x_j,r_j) \sqsupseteq (x_i,r_i)}A(x,x_j). \]
Let $t$ be an arbitrary number that is strictly smaller than \[\bw_{(x_j,r_j) \sqsupseteq (x_i,r_i)}A(x,x_j).\]	Since $\& $ is left continuous and $\phi$ is inhabited, there is some $(x_k,r_k) \in {\rm C}\phi$ such that \[ t\leq r_k\&\bw_{(x_j,r_j) \sqsupseteq (x_i,r_i)}A(x,x_j). \] Take some $(x_l,r_l)\in {\rm C}\phi$ such that $(x_i,r_i), (x_k,r_k)\sqsubseteq(x_l,r_l)$. Then \begin{align*}
 	\phi(x) \geq\phi(x_l)\&A(x,x_l)
 	 \geq r_k\&  A(x,x_l)\geq t.\end{align*} Therefore, by   arbitrariness of $t$,   \[\phi(x)\geq \bw_{(x_j,r_j) \sqsupseteq (x_i,r_i)}A(x,x_j).  \]
  The proof is completed. \end{proof}

A slight improvement of the argument shows that  the above theorem is valid for all linearly ordered quantales. That is, if $\CQ$ is a linearly ordered quantale, then irreducible ideals coincide with forward Cauchy ideals.

As an application,  the following corollary characterizes, for the quantale $\CQ=([0,1],\&)$ with $\&$ being a left continuous t-norm, the irreducible ideals in the $\CQ$-ordered sets $([0,1],d_L)$ and $([0,1],d_R)$.

\begin{cor} \label{irreducible ideals in [0,1]} Let $\&$ be a left continuous t-norm and $\CQ=([0,1],\&)$.  \begin{enumerate}[(1)]\item A fuzzy lower set $\phi$  of   the $\CQ$-ordered set $([0,1],d_L)$ is an irreducible ideal if and only if either  $\phi(x)=x \rightarrow a$ for some $a\in [0,1]$ or $\phi(x)=\bv_{b<a}(x\ra b)$ for some $ a>0$.
\item A fuzzy lower set $\psi$ of the $\CQ$-ordered set $([0,1],d_R)$  is an irreducible ideal if and only if either  $\psi(x)=a \rightarrow x$ for some $a\in [0,1]$ or $\psi(x)=\bv_{b>a}(b\ra x)$ for some $ a<1$. \end{enumerate} \end{cor}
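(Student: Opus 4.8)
The plan is to lean on Theorem \ref{G is FC}: for $\CQ=([0,1],\&)$ the class $\CI$ of irreducible ideals coincides with the class $\mathcal{W}$ of forward Cauchy ideals, so it suffices to compute the forward Cauchy ideals of the two chains $([0,1],d_L)$ and $([0,1],d_R)$ explicitly. I would prove (1) in detail and obtain (2) by the evident dualisation. For (1), I start from a forward Cauchy net $\{x_i\}$ in $([0,1],d_L)$ and its generated ideal $\phi(x)=\bv_i\bw_{j\geq i}(x\ra x_j)$. Pulling the meet inside the implication via Proposition \ref{2.1}(6) gives $\bw_{j\geq i}(x\ra x_j)=x\ra\bw_{j\geq i}x_j$, so that, writing $b_i=\bw_{j\geq i}x_j$, one has $\phi(x)=\bv_i(x\ra b_i)$. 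The net $\{b_i\}$ is monotone increasing and, by Lemma \ref{yoneda limit in Q}, $a:=\bv_i b_i=\bw_i\bv_{j\geq i}x_j$ is the Yoneda limit of $\{x_i\}$; thus every forward Cauchy ideal of the chain is in fact generated by the monotone net $\{b_i\}$ increasing to $a$.

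The dichotomy in the statement then comes from whether $\{b_i\}$ attains its supremum $a$. If $b_i=a$ for some (hence every larger) $i$, then $\phi(x)=x\ra a$, the first form. Otherwise $b_i<a$ for all $i$; since $\bv_i b_i=a$ and each $b_i\geq 0$ this forces $a>0$, and I claim $\bv_i(x\ra b_i)=\bv_{b<a}(x\ra b)$. Here $\leq$ is immediate because each $b_i<a$, while for $\geq$ any $b<a$ cannot be an upper bound of $\{b_i\}$, so some $b_i\geq b$ and hence $x\ra b_i\geq x\ra b$; taking the join over $b<a$ yields the second form.

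For the converse I check that both forms are forward Cauchy ideals. The map $x\mapsto x\ra a=d_L(x,a)=\y(a)(x)$ is a principal ideal, hence forward Cauchy by Example \ref{principal ideal}. For the second form I pick any increasing sequence $b_n\uparrow a$ with $b_n<a$; being monotone it is forward Cauchy in $([0,1],d_L)$, and the computation above shows it generates exactly $\bv_{b<a}(x\ra b)$. Part (2) runs identically once $d_L$ is replaced by $d_R$, Proposition \ref{2.1}(6) by Proposition \ref{2.1}(5) (so that $\bw_{j\geq i}(x_j\ra x)=(\bv_{j\geq i}x_j)\ra x$), Lemma \ref{yoneda limit in Q} by Lemma \ref{order convergence}, and the increasing net $\{b_i\}$ by the decreasing net $c_i=\bv_{j\geq i}x_j$ with infimum $a$; the two cases become $\psi(x)=a\ra x$ and $\psi(x)=\bv_{b>a}(b\ra x)$ with $a<1$.

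The main obstacle is the middle step: one must first recognise that a general forward Cauchy ideal of the chain is already generated by the monotone net of tail infima $b_i$ (resp.\ tail suprema $c_i$), and then observe that the join $\bv_i(x\ra b_i)$ need not collapse to $x\ra a$. This is precisely because the implication $x\ra(-)$, being a right adjoint, preserves meets but not joins, so the two distinct families in the statement are exactly the two possible outcomes of this failure of join-preservation; handling the boundary values ($a=0$ in (1), $a=1$ in (2)) is what pins down the ranges ``$a>0$'' and ``$a<1$''.
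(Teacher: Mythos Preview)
Your proposal is correct and follows essentially the same route as the paper: invoke Theorem~\ref{G is FC} to reduce to forward Cauchy ideals, pull the inner meet through the implication to rewrite $\phi(x)=\bv_i(x\ra b_i)$ with $b_i=\bw_{j\geq i}x_j$ increasing to $a$, and then split on whether the tail-infimum net attains $a$. The paper's proof is terser (it simply records the last equality and states the dichotomy, using the sequence $\{a-1/n\}$ for sufficiency), but your added details---the verification that $\bv_i(x\ra b_i)=\bv_{b<a}(x\ra b)$ in the non-attaining case and the explicit dualisation for~(2)---are exactly the steps the paper leaves to the reader.
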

\begin{proof}(1) Sufficiency is easy since the fuzzy lower set $\phi(x)=\bv_{b<a}(x\ra b)$ is generated by the forward Cauchy sequence $\{a-1/n\}$. As for  necessity, suppose that $\phi$ is an irreducible ideal of $([0,1],d_L)$. Then there is a forward Cauchy net $\{x_i\}$ in $([0,1],d_L)$ such that \[\phi(x)=\bv_i\bw_{j\geq i}(x\ra x_j).\] Let $a= \bv_i\bw_{j\geq i}x_j$. Then \[\phi(x)=\bv_i\bw_{j\geq i}(x\ra x_j)=\bv_i\Big(x\ra \bw_{j\geq i}x_j\Big), \] hence either $\phi(x)=x \rightarrow a$   or $\phi(x)=\bv_{b<a}(x\ra b)$.

(2) Similar to (1). \end{proof}

\begin{thm}\label{FC is flat} For a meet continuous quantale  $\CQ$, every forward Cauchy ideal is   flat.   \end{thm}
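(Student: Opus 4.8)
The plan is to follow the pattern of the proof of Theorem \ref{FC is irreducible}, but with fuzzy upper sets and the tensor (intersection degree) $\otimes$ playing the roles that fuzzy lower sets and the inclusion degree ${\rm sub}_A$ played there. Fix a forward Cauchy net $\{x_i\}$ in $A$ and put $\varphi=\bv_i\bw_{j\geq i}A(-,x_j)$; I must show that $\varphi$ is a flat ideal. That $\varphi$ is inhabited is exactly Step~1 of the proof of Theorem \ref{FC is irreducible}, so it remains to verify the flatness identity $\varphi\otimes(\psi_1\wedge\psi_2)=(\varphi\otimes\psi_1)\wedge(\varphi\otimes\psi_2)$ for all fuzzy upper sets $\psi_1,\psi_2$ of $A$.

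The heart of the argument is the computation, for every fuzzy upper set $\psi$ of $A$,
\[\varphi\otimes\psi=\bv_i\bw_{j\geq i}\psi(x_j),\]
the exact dual of Step~2 in the irreducible case. For `$\leq$' I would expand $\varphi\otimes\psi=\bv_x\varphi(x)\&\psi(x)$, pull the join over $i$ outside using distributivity of $\&$ over joins, and then use that $\psi$ is a fuzzy upper set: for $j\geq i$ one has $(\bw_{j\geq i}A(x,x_j))\&\psi(x)\leq A(x,x_j)\&\psi(x)\leq\psi(x_j)$, so the inner supremum is bounded by $\bw_{j\geq i}\psi(x_j)$. For `$\geq$' I would first note $\bv_{m\geq i}\varphi(x_m)=1$ for each $i$: indeed $\varphi(x_m)\geq\bw_{q\geq m}A(x_m,x_q)$, and forward Cauchyness of $\{x_i\}$ forces $\bv_{m\geq i}\bw_{q\geq m}A(x_m,x_q)=1$ since the tail $\{m\geq i\}$ is cofinal. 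Then, using $\psi(x_m)\geq\bw_{j\geq i}\psi(x_j)$ for $m\geq i$ together with distributivity of $\&$ over the join $\bv_{m\geq i}$,
\[\varphi\otimes\psi\geq\bv_{m\geq i}\varphi(x_m)\&\psi(x_m)\geq\Big(\bv_{m\geq i}\varphi(x_m)\Big)\&\bw_{j\geq i}\psi(x_j)=\bw_{j\geq i}\psi(x_j),\]
and taking the supremum over $i$ yields `$\geq$'. Note that this step uses neither meet continuity nor Lemma \ref{yoneda limit in Q}.

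With this formula in hand, flatness follows from meet continuity. Writing $u_i=\bw_{j\geq i}\psi_1(x_j)$ and $v_i=\bw_{j\geq i}\psi_2(x_j)$, both are monotone (hence directed) nets in $Q$, and a routine identity in complete lattices gives $u_i\wedge v_i=\bw_{j\geq i}(\psi_1(x_j)\wedge\psi_2(x_j))$. Two applications of meet continuity yield $(\bv_i u_i)\wedge(\bv_{i'}v_{i'})=\bv_i\bv_{i'}(u_i\wedge v_{i'})$, and cofinality of the diagonal collapses this to $\bv_i(u_i\wedge v_i)$. Combining with the formula of the previous paragraph, applied to $\psi_1$, $\psi_2$ and to the fuzzy upper set $\psi_1\wedge\psi_2$, gives exactly $(\varphi\otimes\psi_1)\wedge(\varphi\otimes\psi_2)=\varphi\otimes(\psi_1\wedge\psi_2)$.

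The main obstacle I anticipate is the reverse inequality `$\geq$' in the displayed formula for $\varphi\otimes\psi$. In the irreducible case the analogous Step~2 was immediate, because ${\rm sub}_A(-,\phi)$ is a representable functional and the defining property of the Yoneda limit (Proposition \ref{3.9}) applies verbatim. Here, however, $-\otimes\psi$ is merely a fuzzy upper set of $\CP A$ and is not governed by the Yoneda-limit formula, so one cannot simply quote Proposition \ref{3.9}; the cofinality/forward-Cauchy argument establishing $\bv_{m\geq i}\varphi(x_m)=1$ is what replaces it and must be carried out by hand.
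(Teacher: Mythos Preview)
Your proof is correct and follows the same two-step architecture as the paper: first establish the formula $\varphi\otimes\psi=\bv_i\bw_{j\geq i}\psi(x_j)$ for every fuzzy upper set $\psi$, then deduce flatness from meet continuity. The second step is essentially identical to the paper's.

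Where you genuinely diverge is in the proof of the key formula. The paper does \emph{not} compute $\varphi\otimes\psi$ directly; instead it invokes Lemma~\ref{tensor via sub} to rewrite $\varphi\otimes\psi=\bw_{p}(\sub_A(\varphi,\psi\ra p)\ra p)$, then applies Proposition~\ref{3.9} (the Yoneda-limit formula for $\CP A$) and the Yoneda lemma to each $\sub_A(\varphi,\psi\ra p)$, and finally uses Lemma~\ref{yoneda limit in Q} on the forward Cauchy net $\{\psi(x_j)\}$ in $(Q,d_L)$ to swap the $\bv_i\bw_{j\geq i}$ inside the implication. Your direct argument---bounding $(\bw_{j\geq i}A(x,x_j))\&\psi(x)\leq\bw_{j\geq i}\psi(x_j)$ for `$\leq$', and exploiting $\bv_{m\geq i}\varphi(x_m)=1$ for `$\geq$'---bypasses all three of those auxiliary results. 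This is more elementary and self-contained; the paper's route, by contrast, has the virtue of making transparent the parallel with Step~2 of Theorem~\ref{FC is irreducible} (both reduce to Proposition~\ref{3.9}), and it reuses Lemma~\ref{yoneda limit in Q} rather than re-deriving the needed inequality by hand. Your anticipated obstacle---that $-\otimes\psi$ is not representable so Proposition~\ref{3.9} does not apply verbatim---is exactly what the paper circumvents via the $\sub$-reformulation of Lemma~\ref{tensor via sub}.
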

\begin{proof}We only need to show that if $\{x_i\}$ is a forward Cauchy net in a $\CQ$-ordered set $A$ and \[\varphi=\bv_i\bw_{j\geq i}A(-,x_j),\] then $\varphi$ is flat. We do this in two steps.

\textbf{Step 1}. For each fuzzy upper set $\psi$ of $A$, \[\varphi\otimes\psi=\bv_i\bw_{j\geq i}\psi(x_j).\]

Since $\psi$ is a fuzzy upper set, $\{\psi(x_i)\}$ is a forward Cauchy net in $(Q,d_L)$, hence \begin{align*}\varphi\otimes\psi &= \bw_{p\in Q}(\sub_X(\varphi,\psi\ra p)\ra p) &(\text{Lemma \ref{tensor via sub}})\\ &=  \bw_{p\in Q}\Big(\sub_X\Big(\bv_i\bw_{j\geq i}A(-,x_j),\psi\ra p\Big)\ra p\Big) \\ &=  \bw_{p\in Q}\Big(\Big(\bv_i\bw_{j\geq i}\sub_A \big(A(-,x_j),\psi\ra p \big)\Big)\ra p\Big)&(\text{Proposition \ref{3.9}}) \\ &=  \bw_{p\in Q}\Big(\Big(\bv_i\bw_{j\geq i}(\psi(x_j)\ra p)\Big)\ra p\Big) &(\text{Yoneda lemma})\\ &=  \bw_{p\in Q}\Big(\Big(\bv_i\bw_{j\geq i}\psi(x_j)\ra p\Big)\ra p\Big) &(\text{Lemma \ref{yoneda limit in Q}})\\ &= \bv_i\bw_{j\geq i}\psi(x_j). \end{align*}

\textbf{Step 2}. $\varphi$ is flat. For any fuzzy upper sets $\psi_1$ and $\psi_2$, \begin{align*}\varphi\otimes \psi_1\wedge\phi\otimes\psi_2 &=\bv_i\bw_{j\geq i} \psi_1(x_j) \wedge\bv_i\bw_{j\geq i} \psi_2(x_j) &(\text{Step 1})\\ &   = \bv_i\bw_{j\geq i}(\psi_1(x_j)\wedge\psi_2(x_j)) & (\CQ~\text{is meet continuous})\\ &  = \varphi\otimes(\psi_1\wedge\psi_2). &(\text{Step 1}) \end{align*}

The proof is completed.
\end{proof}

Similar to Proposition \ref{DMC is necessary}, it can be shown that the meet continuity of $\CQ$  is also necessary in Theorem \ref{FC is flat}.

\begin{prop}\label{MC is necessary} If all forward Cauchy ideals are flat, then the quantale $\CQ$ is meet continuous. \end{prop}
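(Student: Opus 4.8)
The plan is to dualize the proof of Proposition~\ref{DMC is necessary}: there one worked in $(Q,d_R)$ and paired a forward Cauchy ideal with fuzzy \emph{lower} sets via $\sub$, whereas here I would work in $(Q,d_L)$ and pair a forward Cauchy ideal with fuzzy \emph{upper} sets via the intersection degree $\otimes$, which is precisely the pairing occurring in the definition of flatness. Concretely, it suffices to show that for every $a\in Q$ and every directed subset $D$ of $Q$,
\[a\wedge\bv D=\bv_{d\in D}(a\wedge d),\]
which is the defining condition of meet continuity of $\CQ$.

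First I would build the relevant forward Cauchy ideal. Consider
\[\phi=\bv_{d\in D}d_L(-,d)=\bv_{d\in D}(-\ra d),\]
a fuzzy lower set of $(Q,d_L)$. Indexing $D$ by its own (directed) order and taking the net $\mathfrak{x}(d)=d$, one checks that $\{d\}_{d\in D}$ is forward Cauchy in $(Q,d_L)$, since $d_L(d',d'')=d'\ra d''=1$ whenever $d'\le d''$, and that it generates $\phi$, because $\bw_{d'\ge d}(x\ra d')=x\ra d$ and hence $\bv_d\bw_{d'\ge d}(x\ra d')=\phi(x)$. Thus $\phi$ is a forward Cauchy ideal, hence flat by hypothesis; it is also inhabited, as $\bv_{x}\phi(x)\ge\bv_{d}(d\ra d)=1$.

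Next I would feed two suitable fuzzy upper sets of $(Q,d_L)$ into flatness. By Proposition~\ref{2.1}(4) both the identity map $\id_Q$ and the constant map $\underline a$ are fuzzy upper sets of $(Q,d_L)$, and so is $\id_Q\wedge\underline a$ (a meet of fuzzy upper sets is again one). Using $\phi\otimes\psi=\bv_{x}\phi(x)\&\psi(x)$, together with $x\&(x\ra d)\le d$ and equality at $x=d$, I would compute
\[\phi\otimes\id_Q=\bv_{x}\bv_{d}\big(x\&(x\ra d)\big)=\bv D,\qquad \phi\otimes\underline a=\Big(\bv_{x}\phi(x)\Big)\&a=a,\]
and, applying the analogous estimate $(x\ra d)\&(a\wedge x)\le a\wedge d$ (with equality at $x=d$),
\[\phi\otimes(\id_Q\wedge\underline a)=\bv_{x}\bv_{d}\big((x\ra d)\&(a\wedge x)\big)=\bv_{d\in D}(a\wedge d).\]
Flatness of $\phi$ then yields $a\wedge\bv D=(\phi\otimes\underline a)\wedge(\phi\otimes\id_Q)=\phi\otimes(\id_Q\wedge\underline a)=\bv_{d\in D}(a\wedge d)$, which is exactly meet continuity.

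The only delicate point is the three $\otimes$-computations: each uses the bound $p\&(p\ra q)\le q$ for the inequality ``$\le$'' and the substitution $x=d$ to realize the reverse inequality ``$\ge$''. These are routine but must be arranged with care, since $\&$ need not distribute over binary meets; the argument sidesteps this by bounding $(x\ra d)\&(a\wedge x)$ above by $a\wedge d$ directly (via $a\wedge x\le x$ and $a\wedge x\le a$) rather than distributing $\&$ across $\wedge$.
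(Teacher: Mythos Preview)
Your proof is correct and is precisely the dualization of the proof of Proposition~\ref{DMC is necessary} that the paper indicates (the paper does not spell out the argument, merely stating that it is ``similar to Proposition~\ref{DMC is necessary}''). The three $\otimes$-computations are sound, and your care in bounding $(x\ra d)\&(a\wedge x)$ directly rather than attempting to distribute $\&$ over $\wedge$ is exactly the right move.
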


\begin{exmp}
Consider the quantale $\CQ=([0,1],\wedge)$ and the $\CQ$-ordered set $ ([0,1],d_L)$.
By linearity of $[0,1]$, every  fuzzy lower set $\phi$ of $([0,1],d_L)$ satisfies   \[\phi(x)\wedge \phi(y)\leq \bigvee\limits_{z\in[0,1]}\phi(z)\wedge d_L(x, z)\wedge d_L(y, z),  \] hence every inhabited fuzzy lower set $\phi$  of $([0,1],d_L)$ is a flat ideal by Remark \ref{history}.   In particular, the map $\phi:[0,1]\lra[0,1]$, given by \[\phi(x)=\begin{cases}1-x, & x\leq 1/2, \\ 1/2, &x>1/2,\end{cases} \] is a flat ideal of $([0,1],d_L)$. But, it is not  irreducible  by Corollary \ref{irreducible ideals in [0,1]}, hence not forward Cauchy by Theorem \ref{FC is irreducible}.  \end{exmp}

In the case that $\CQ$ is the interval $[0,1]$ equipped with a continuous t-norm, we are able to present a sufficient and necessary condition for flat ideals to be forward Cauchy.

\begin{thm}\label{main} Let $\CQ$ be the unit interval equipped with a continuous t-norm $\&$. The following are equivalent: \begin{enumerate}[(1)] \item $\&$ is Archimedean, i.e., $\&$ has no non-trivial idempotent elements.
\item  Every flat ideal is forward Cauchy. \item Every flat ideal is  irreducible.
\end{enumerate} \end{thm}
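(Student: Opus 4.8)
The plan is to reduce the three-way equivalence to two genuinely new implications. Since $([0,1],\&)$ is a quantale with a left continuous t-norm, Theorem~\ref{G is FC} gives $\CI=\mathcal{W}$, so conditions (2) and (3) literally coincide; moreover, being a complete chain, $[0,1]$ is both meet continuous and dually meet continuous, whence Theorems~\ref{FC is flat} and \ref{FC is irreducible} yield $\mathcal{W}\subseteq\CF$ and $\mathcal{W}\subseteq\CI$. Thus the whole statement collapses to the single assertion that $\CF=\CI$ (equivalently $\CF\subseteq\mathcal{W}$) if and only if $\&$ is Archimedean, and it remains to prove $(1)\Rightarrow(3)$ together with the contrapositive $\lnot(1)\Rightarrow\lnot(3)$.

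For $(1)\Rightarrow(3)$ I would invoke the ordinal sum theorem (Theorem~\ref{ordinal sum}): if $\&$ has no non-trivial idempotent, the family of constituent intervals reduces to the single interval $[0,1]$, so $\&$ is isomorphic either to the \L ukasiewicz t-norm or to the product t-norm. Flatness, irreducibility and the forward Cauchy property are all preserved under an isomorphism of quantales (such an isomorphism commutes with $\&$, $\ra$, and arbitrary joins and meets, hence with $\otimes$, $\sub$ and the formation of $\CP$), so it suffices to settle these two model cases. For \L ukasiewicz the law of double negation holds, and here I would establish $\CF=\CI$ directly: by Lemma~\ref{tensor via sub} one has $\phi\otimes\psi=\neg\sub_A(\phi,\neg\psi)$ and $\sub_A(\phi_1,\phi_2)=\neg(\phi_1\otimes\neg\phi_2)$, while by Proposition~\ref{properties of negation} the map $\neg$ is an order-reversing involution interchanging $\CP^\dag A$ with $\CP A$ and sending $\vee$ to $\wedge$. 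Substituting $\psi_i=\neg\phi_i$ then transforms the flatness equation
\[\phi\otimes(\psi_1\wedge\psi_2)=(\phi\otimes\psi_1)\wedge(\phi\otimes\psi_2)\]
into $\neg\sub_A(\phi,\phi_1\vee\phi_2)=\neg\big(\sub_A(\phi,\phi_1)\vee\sub_A(\phi,\phi_2)\big)$; applying $\neg$ once more recovers exactly the irreducibility equation, and since both notions additionally demand inhabitedness, $\CF=\CI$. For the product t-norm, which is isomorphic to Lawvere's quantale, flat ideals are known to coincide with forward Cauchy ideals \cite{SV2005}, so with Theorem~\ref{G is FC} one again obtains $\CF=\mathcal{W}=\CI$.

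For the converse I would argue contrapositively: assuming $\&$ has a non-trivial idempotent $e\in(0,1)$, I must exhibit a flat ideal that is not irreducible. The key structural fact is that an idempotent forces $x\&e=\min\{x,e\}$ for all $x$, so $\&$ degenerates to the minimum whenever $e$ is involved; this is the frame-like behaviour responsible for the example of $([0,1],\wedge)$ following Proposition~\ref{MC is necessary}, where $\phi(x)=(1-x)\vee(1/2)$ is flat but, by Corollary~\ref{irreducible ideals in [0,1]}, not irreducible. The strategy is to build on $([0,1],d_L)$ a flat ideal $\phi$ whose construction exploits this min-behaviour at level $e$, and then to certify non-irreducibility either by comparing $\phi$ with the one-parameter family classified in Corollary~\ref{irreducible ideals in [0,1]} or, more robustly, by exhibiting explicit fuzzy lower sets $\phi_1,\phi_2$ for which $\sub_A(\phi,\phi_1\vee\phi_2)>\sub_A(\phi,\phi_1)\vee\sub_A(\phi,\phi_2)$, the gap being exactly the failure of the distributive identity (recall that $\sub_A(-,\lambda)$ turns joins into meets).

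The hard part is the other half of this last step, namely verifying that such a $\phi$ really is flat for a general, non-frame continuous t-norm. Here one cannot appeal to the blanket fact that every inhabited lower set of a chain is flat, and the naive candidates are treacherous: truncating a principal ideal at the level $e$ tends to remain of the form $A(-,a)=(\,x\ra a\,)$ and hence irreducible, while the join of two principal ideals is typically not flat — as one already sees on the two-element $\CQ$-ordered set with $A(u,v)=A(v,u)=e$, where every flat ideal turns out to be principal. The genuine construction must therefore use $x\&e=\min\{x,e\}$ more globally, and I expect to verify flatness by computing $\phi\otimes\psi$ through Lemma~\ref{tensor via sub} and using this identity to collapse both $\phi\otimes(\psi_1\wedge\psi_2)$ and $(\phi\otimes\psi_1)\wedge(\phi\otimes\psi_2)$ to the same computation carried out in the minimum t-norm near $e$, where $\wedge$ distributes over itself. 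Pinning down the precise $\phi$ for which this reduction is clean, and checking it against all pairs of fuzzy upper sets, is the delicate point at which the hypothesis that $e$ is a true idempotent — rather than an interior point of a constituent block — becomes essential.
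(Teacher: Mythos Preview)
Your reduction of the three-way equivalence to a single biconditional via Theorem~\ref{G is FC} is sound, and your treatment of the forward direction $(1)\Rightarrow(3)$ is correct: reducing by the ordinal sum theorem to the \L ukasiewicz and product cases, handling the former through the law of double negation and the latter through Vickers' result, works and is close in spirit to what the paper does (the paper simply cites an external result for $(1)\Rightarrow(2)$ and obtains $(2)\Rightarrow(3)$ from Theorem~\ref{FC is irreducible}, so your case split is a mild variation).

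The genuine gap is in the contrapositive $\lnot(1)\Rightarrow\lnot(3)$: you correctly identify the strategy and the difficulty, but you do not actually produce the flat, non-irreducible ideal, and your discussion of ``naive candidates'' shows you have not found it. The construction you are missing is exactly the content of Lemma~\ref{3.11}: given a non-trivial idempotent $b$ and any $a\in(0,b)$, the map
\[
\phi(x)=b\vee(x\ra a)
\]
is a flat ideal of $([0,1],d_L)$. Flatness follows from a short direct computation using divisibility of $([0,1],\&)$ and the fact that $b\& y=b\wedge y$ for idempotent $b$: for any fuzzy upper set $\psi$ one gets $\phi\otimes\psi=(b\wedge\psi(1))\vee\psi(a)=(b\vee\psi(a))\wedge\psi(1)$, and from this explicit form the identity $\phi\otimes(\psi_1\wedge\psi_2)=(\phi\otimes\psi_1)\wedge(\phi\otimes\psi_2)$ is immediate by distributivity of the underlying lattice. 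Non-irreducibility is then trivial: $\phi$ is literally the join $\phi_1\vee\phi_2$ of the two fuzzy lower sets $\phi_1=\underline{b}$ and $\phi_2(x)=x\ra a$, yet $\sub(\phi,\phi_1)=\phi(0)\ra b=1\ra b=b<1$ and $\sub(\phi,\phi_2)\leq\phi(1)\ra(1\ra a)=b\ra a<1$. This is precisely the sort of ``global'' use of $x\& b=\min\{x,b\}$ you were looking for, and it avoids the pitfalls you noted with principal ideals and two-point examples.
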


We prove a lemma first. A quantale $\CQ=(Q,\&)$ is called \emph{divisible} \cite{Ha98,Ho95} if \[x\&(x\ra y)=x\wedge y\] for all $x,y\in Q$. It is known that the underlying lattice of a divisible quantale is a frame, hence a distributive lattice, see e.g. \cite{Ho95}. Let $\CQ$ be a divisible quantale and $b$ an idempotent element. Then for all $x\in Q$, \[b\wedge x=b\&(b\ra x)=b\&b\&(b\ra x)\leq b\&x\leq b\wedge x,\] hence $b\wedge x=b\&x$.

\begin{lem}\label{3.11} Suppose $\CQ=(Q,\&)$ is a divisible quantale, $b$ is an idempotent element of $\&$.   Then for each $a\in Q$, the map  $\phi(x)=b\vee(x\ra a)$  is a flat ideal of the $\CQ$-ordered set $(Q,d_L)$.\end{lem}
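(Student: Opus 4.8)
The plan is to verify the three defining properties of a flat ideal for the map $\phi(x)=b\vee(x\ra a)$ on $(Q,d_L)$: that it is a fuzzy lower set, that it is inhabited, and that it satisfies the flatness equation. The first two are quick. Observe that $\phi$ is the pointwise join $\underline b\vee A(-,a)$ of the constant fuzzy lower set $\underline b$ (value $b$) and the representable fuzzy lower set $A(-,a)=d_L(-,a)$; a pointwise join of fuzzy lower sets is again a fuzzy lower set, so $\phi\in\CP(Q,d_L)$. Inhabitedness is immediate by evaluating at $0$: since $0\le a$, Proposition \ref{2.1}(2) gives $0\ra a=1$, hence $\phi(0)=b\vee 1=1$ and $\bigvee_x\phi(x)=1$.

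The crux is the flatness equation, and my first move is to record a closed form for the tensor. Because $\&$ distributes over joins, $\phi\otimes(-)$ splits along the pointwise join defining $\phi$: for any fuzzy upper set $\psi$ of $(Q,d_L)$,
\[ \phi\otimes\psi=(\underline b\otimes\psi)\vee(A(-,a)\otimes\psi). \]
Here $\underline b\otimes\psi=\bigvee_x b\&\psi(x)=b\&m_\psi$, where $m_\psi:=\bigvee_x\psi(x)$, and $A(-,a)\otimes\psi=\bigvee_x(x\ra a)\&\psi(x)=\psi(a)$, the fuzzy-upper-set inequality giving $(x\ra a)\&\psi(x)\le\psi(a)$ with equality attained at $x=a$. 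Since $b$ is idempotent and $\CQ$ is divisible, the observation recorded just before the lemma gives $b\&t=b\wedge t$, so
\[ \phi\otimes\psi=(b\wedge m_\psi)\vee\psi(a). \]

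With this formula in hand, writing $m_i=m_{\psi_i}$, $m_{12}=m_{\psi_1\wedge\psi_2}$ and $\alpha_i=\psi_i(a)$, the flatness equation reduces to the purely lattice-theoretic identity
\[ (b\wedge m_{12})\vee(\alpha_1\wedge\alpha_2)=\big((b\wedge m_1)\vee\alpha_1\big)\wedge\big((b\wedge m_2)\vee\alpha_2\big). \]
The inequality ``$\le$'' always holds, since $\phi\otimes(-)$ is isotone in the pointwise order and $\psi_1\wedge\psi_2\le\psi_i$; so only ``$\ge$'' needs work. Two facts are used crucially here: that the underlying lattice $Q$ is a frame (divisibility again), so $\wedge$ distributes over arbitrary joins, and that every fuzzy upper set $\psi$ is monotone for the underlying order, i.e. $u\le v$ implies $\psi(u)\le\psi(v)$ (because then $u\ra v=1$ and $(u\ra v)\&\psi(u)\le\psi(v)$). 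Expanding the right-hand side by frame distributivity gives four meet-terms; the combinatorial heart is the substitution $z=x\vee y$, which satisfies $\psi_1(x)\le\psi_1(z)$ and $\psi_2(y)\le\psi_2(z)$. Applying it with $z=x\vee y$ yields $b\wedge m_1\wedge m_2\le b\wedge m_{12}$, and applying it with $z=x\vee a$ yields $b\wedge m_1\wedge\alpha_2\le b\wedge m_{12}$ and, symmetrically, $b\wedge m_2\wedge\alpha_1\le b\wedge m_{12}$. Thus all four terms are bounded by $(b\wedge m_{12})\vee(\alpha_1\wedge\alpha_2)$, giving ``$\ge$'' and hence equality.

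I expect the only genuine subtlety to be the tensor computation together with seeing exactly where divisibility enters twice: once through $b\&t=b\wedge t$, to linearise the contribution of the idempotent $b$, and once through the frame law, to split meets of suprema into joins of meets. Once the closed form $\phi\otimes\psi=(b\wedge m_\psi)\vee\psi(a)$ is established and the monotonicity substitution $z=x\vee y$ is in place, the remaining verification is short and mechanical.
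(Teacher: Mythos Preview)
Your proof is correct and follows the same overall strategy as the paper: derive the closed form $\phi\otimes\psi=(b\wedge m_\psi)\vee\psi(a)$ via the idempotent law $b\&t=b\wedge t$, then finish with lattice distributivity. The only difference is in the final algebraic step. The paper observes at once that $m_\psi=\psi(1)$ (since $\psi$ is monotone and $1$ is the top of the integral quantale), which makes $m_1\wedge m_2=(\psi_1\wedge\psi_2)(1)=m_{12}$ immediate; it then rewrites $(b\wedge\psi_i(1))\vee\psi_i(a)=(b\vee\psi_i(a))\wedge\psi_i(1)$ using $\psi_i(a)\le\psi_i(1)$, after which the flatness identity follows from \emph{finite} distributivity alone. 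Your route keeps $m_\psi$ abstract, invokes the full frame law to commute $\wedge$ past the suprema defining $m_i$, and then uses the substitution $z=x\vee y$ to recombine terms --- correct, but all of that machinery is bypassed by the single observation $m_\psi=\psi(1)$.
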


\begin{proof}   It is clear that $\phi$ is a fuzzy lower set of $(Q,d_L)$ and $\bv_{x\in Q}\phi(x)=1$. It remains to check that $\phi\otimes(\psi_1\wedge\psi_2)= (\phi\otimes\psi_1)\wedge(\phi\otimes\psi_2)$ for all upper sets   $\psi_1,\psi_2 $  of $(Q,d_L)$.

Because for $i=1,2$, \begin{align*} \phi\otimes \psi_i  &= \bv_{x\in Q}((b\&\psi_i(x))\vee((x\ra a)\&\psi_i(x)))\\ &= (b\wedge\psi_i(1))\vee \psi_i(a) &(\text{$b$ is idempotent})\\ & = (b\vee\psi_i(a))\wedge\psi_i(1), &(\text{$Q$ is distributive}) \end{align*}  therefore \begin{align*}(\phi\otimes\psi_1)\wedge(\phi\otimes\psi_2)&= (b\vee\psi_1(a))\wedge\psi_1(1)\wedge(b\vee\psi_2(a))\wedge\psi_2(1)\\ &= (b\vee(\psi_1(a)\wedge\psi_2(a))\wedge (\psi_1(1)\wedge\psi_2(1))\\ &= \phi\otimes(\psi_1\wedge\psi_2).    \end{align*} This completes the proof. \end{proof}

\begin{proof}[Proof of Theorem \ref{main}] $(1)\Rightarrow(2)$ This is contained in \cite[Proposition 7.9]{LLZ17}. If $\&$ is isomorphic to the product t-norm, an equivalent version of this implication can also be found in Vickers \cite{SV2005}.

$(2)\Rightarrow(3)$ This follows immediately from Theorem \ref{FC is irreducible}.

$(3)\Rightarrow(1)$  Suppose $b$ is a  non-trivial idempotent element of $\&$. Take some $a\in (0,b)$. Since  $[0,1]$ together with a continuous t-norm is a divisible quantale \cite{Belo02,Ha98}, it follows from Lemma \ref{3.11} that $\phi(x)=b\vee(x\ra a)$ is a flat ideal of $([0,1],d_L)$. But, $\phi$ is not irreducible, because neither $\phi(x)\leq b$ for all $x$ nor $\phi(x)\leq x\ra a$ for all $x$, a contradiction. \end{proof}

Finally, we discuss the relationship between irreducible ideals and flat ideals.

A quantale $\CQ$ is prelinear if  $(p\ra q)\vee(q\ra p)=1$ for all $p,q\in Q$. It is known that $\CQ$ is prelinear if and only if $(p\wedge q)\ra r= (p\ra r)\vee(q\ra r)$ for all $p,q,r\in Q$, see e.g. \cite{Belo02}.

\begin{prop}\label{irr is flat}  If $\CQ$ is prelinear, then every irreducible ideal is a flat ideal. \end{prop}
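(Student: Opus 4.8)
The plan is to reduce the flatness identity directly to the irreducibility identity by passing through the formula of Lemma~\ref{tensor via sub}(1), which rewrites the intersection degree $\phi\otimes\psi$ purely in terms of the inclusion degree $\sub_A$. The bridge between the two notions is the residuated form of prelinearity, $(p\wedge q)\ra r=(p\ra r)\vee(q\ra r)$, which converts a pointwise meet of fuzzy upper sets into a pointwise join of fuzzy lower sets. Since an irreducible ideal $\phi$ is inhabited by definition, the only content to establish is $\phi\otimes(\psi_1\wedge\psi_2)=(\phi\otimes\psi_1)\wedge(\phi\otimes\psi_2)$ for all fuzzy upper sets $\psi_1,\psi_2$ of $A$.

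Concretely, I would first observe that for every $p\in Q$ the fuzzy lower set $(\psi_1\wedge\psi_2)\ra p$ equals, pointwise, the join $(\psi_1\ra p)\vee(\psi_2\ra p)$ of the two fuzzy lower sets $\psi_i\ra p$; this is exactly prelinearity applied argument by argument, and one checks in passing that each $\psi_i\ra p$ is a genuine fuzzy lower set by the earlier lemma on $\phi\ra p$. Writing $u_i(p)=\sub_A(\phi,\psi_i\ra p)$, Lemma~\ref{tensor via sub}(1) gives
\[
\phi\otimes(\psi_1\wedge\psi_2)=\bw_{p\in Q}\big(\sub_A(\phi,(\psi_1\ra p)\vee(\psi_2\ra p))\ra p\big),
\]
and irreducibility of $\phi$ now turns the inner $\sub_A$ into $u_1(p)\vee u_2(p)$. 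Applying Proposition~\ref{2.1}(5) to split $(u_1(p)\vee u_2(p))\ra p=(u_1(p)\ra p)\wedge(u_2(p)\ra p)$, and then distributing the outer infimum over the binary meet, yields
\[
\phi\otimes(\psi_1\wedge\psi_2)=\Big(\bw_{p\in Q}(u_1(p)\ra p)\Big)\wedge\Big(\bw_{p\in Q}(u_2(p)\ra p)\Big),
\]
which is precisely $(\phi\otimes\psi_1)\wedge(\phi\otimes\psi_2)$ by a second appeal to Lemma~\ref{tensor via sub}(1).

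I do not expect a serious obstacle: once the prelinear identity is invoked at the level of fuzzy lower sets, the argument is a formal chain of rewrites. The only points demanding care are bookkeeping ones, namely confirming that meets of fuzzy upper sets and joins of fuzzy lower sets are computed pointwise, so that the prelinear identity may be applied coordinatewise and the resulting join is a legitimate fuzzy lower set to feed into the irreducibility hypothesis, and justifying the final interchange $\bw_p(a_p\wedge b_p)=(\bw_p a_p)\wedge(\bw_p b_p)$, which holds for arbitrary families. Note also that $\phi\otimes(\psi_1\wedge\psi_2)\leq(\phi\otimes\psi_1)\wedge(\phi\otimes\psi_2)$ is automatic from monotonicity of $\otimes$ in its upper-set slot, so only the reverse inequality carries genuine content; the computation above establishes the equality outright and subsumes it.
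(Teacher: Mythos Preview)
Your proposal is correct and is essentially identical to the paper's own proof: both pass through Lemma~\ref{tensor via sub}(1), invoke prelinearity to rewrite $(\psi_1\wedge\psi_2)\ra p$ as $(\psi_1\ra p)\vee(\psi_2\ra p)$, apply irreducibility together with Proposition~\ref{2.1}(5), and then distribute the outer infimum over the binary meet before appealing to Lemma~\ref{tensor via sub}(1) once more. The paper's argument is slightly terser, omitting the bookkeeping remarks you include, but the chain of equalities is the same.
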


\begin{proof}Assume that $\phi$ is an irreducible ideal. Then for all fuzzy upper sets $\psi_1,\psi_2$, \begin{align*}\phi\otimes(\psi_1\wedge\psi_2) &=  \bw_{p\in Q} (\sub_A (\phi,(\psi_1\wedge\psi_2)\ra p )\ra p )\\ &=  \bw_{p\in Q} (\sub_A (\phi,(\psi_1\ra p)\vee(\psi_2 \ra p) )\ra p )\\ &=  \bw_{p\in Q}\Big( (\sub_A (\phi, \psi_1\ra p )\ra p )\wedge (\sub_A (\phi, \psi_2 \ra p  )\ra p )\Big)\\ &= (\phi\otimes\psi_1)\wedge(\phi\otimes\psi_2), \end{align*} hence $\phi$ is flat. \end{proof}

\begin{prop} If $\CQ$ satisfies the law of double negation then flat ideals coincide with irreducible ideals.\end{prop}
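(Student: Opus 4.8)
The plan is to exploit the fact that, under the law of double negation, the negation operation $\neg(-)=(-)\ra 0$ is an involutive, order-reversing bijection between fuzzy lower sets and fuzzy upper sets of $A$, together with the two identities of Lemma \ref{tensor via sub}, which let me convert freely between the intersection degree $\otimes$ and the inclusion degree $\sub_A$. Concretely, under double negation those identities read $\phi\otimes\psi=\neg\sub_A(\phi,\neg\psi)$ and $\sub_A(\phi_1,\phi_2)=\neg(\phi_1\otimes\neg\phi_2)$, while Proposition \ref{properties of negation}(3), together with its mirror consequence $\neg\bv_i p_i=\bw_i\neg p_i$ (obtained by applying $\neg$ and using $\neg\neg=\id$), shows that $\neg$ turns joins into meets and meets into joins. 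Since inhabitedness occurs verbatim in both definitions and so requires no work, the whole statement reduces to matching the flatness equation with the irreducibility equation.

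First I would record the bijection explicitly: if $\phi$ is a fuzzy lower set then $\neg\phi=\phi\ra 0$ is a fuzzy upper set (by the lemma asserting that $\phi\ra p$ is a fuzzy upper set), and $\neg\neg\phi=\phi$, so $\psi\mapsto\neg\psi$ sends fuzzy upper sets back to fuzzy lower sets bijectively, i.e. carries $\CP A$ onto $\CP^\dag A$. In particular, quantifying over all pairs of fuzzy upper sets $\psi_1,\psi_2$ is the same as quantifying over $\neg\phi_1,\neg\phi_2$ as $\phi_1,\phi_2$ range over fuzzy lower sets, and vice versa; this is what makes the two defining quantifications interchangeable.

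For the implication \emph{flat $\Rightarrow$ irreducible}, given fuzzy lower sets $\phi_1,\phi_2$ I put $\psi_i=\neg\phi_i$ and compute $\sub_A(\phi,\phi_1\vee\phi_2)=\neg(\phi\otimes\neg(\phi_1\vee\phi_2))=\neg(\phi\otimes(\psi_1\wedge\psi_2))$, then invoke flatness of $\phi$ to split the meet and push $\neg$ through the resulting meet to turn it into a join; each resulting term $\neg(\phi\otimes\psi_i)=\neg(\phi\otimes\neg\phi_i)$ is exactly $\sub_A(\phi,\phi_i)$, yielding the irreducibility identity. The converse \emph{irreducible $\Rightarrow$ flat} is the mirror image: starting from fuzzy upper sets $\psi_1,\psi_2$, set $\phi_i=\neg\psi_i$, use $\phi\otimes(\psi_1\wedge\psi_2)=\neg\sub_A(\phi,\phi_1\vee\phi_2)$, apply irreducibility of $\phi$, and push $\neg$ through the join.

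I do not expect a genuine obstacle here: the argument is a purely formal De Morgan computation whose entire content is packaged in the two double-negation identities of Lemma \ref{tensor via sub}. The only point that needs a moment's care is the bookkeeping of $\neg$ as an involution on truth values and on fuzzy sets pointwise — specifically deriving $\neg\bv_i p_i=\bw_i\neg p_i$ from Proposition \ref{properties of negation}(3), and confirming that $\neg$ genuinely carries $\CP A$ bijectively onto $\CP^\dag A$ — after which the two defining equations match term by term.
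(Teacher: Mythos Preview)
Your proposal is correct and follows essentially the same approach as the paper: the paper's proof simply records the two identities $\sub_A(\phi,\varphi)=\neg(\phi\otimes\neg\varphi)$ and $\phi\otimes\psi=\neg\sub_A(\phi,\neg\psi)$ from Lemma~\ref{tensor via sub} and asserts that the conclusion follows, and your argument spells out exactly that De~Morgan-style computation in both directions.
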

 \begin{proof}If $\CQ$  satisfies the law of double negation, then, by Lemma \ref{tensor via sub}, for all fuzzy lower sets $\phi,\varphi$ and fuzzy upper set $\psi$, \begin{equation*}\sub_A(\phi, \varphi)
= \phi\otimes(\varphi\ra0)\ra0 \end{equation*} and \begin{equation*}\phi\otimes\psi
= \sub_A(\phi, \psi\ra0)\ra0. \end{equation*} The conclusion follows easily from these equations. \end{proof}

\begin{cor}\label{irreducible weights in [0,1] NM} Let $\CQ$ be the unit interval equipped with a left continuous t-norm $\&$. If $\CQ$ satisfies the law of double negation, then for each fuzzy lower set $\phi$ of a $\CQ$-ordered set, the following are equivalent: \begin{enumerate}[\rm(1)] \item $\phi$ is a forward Cauchy ideal. \item $\phi$ is an irreducible ideal. \item $\phi$ is   a flat ideal.  \end{enumerate} \end{cor}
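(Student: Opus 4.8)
The plan is to read the corollary as nothing more than the conjunction of the two results that immediately precede it, checking only that the corollary's hypotheses are precisely what each of those results requires. First I would note that $\CQ$ being the unit interval carrying a left continuous t-norm is exactly the hypothesis of Theorem \ref{G is FC}; that theorem therefore applies directly and delivers the equivalence $(1)\Leftrightarrow(2)$, namely that irreducible ideals and forward Cauchy ideals coincide. No use of the law of double negation is needed at this stage.

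Next I would establish $(2)\Leftrightarrow(3)$ by appealing to the preceding proposition, which states that flat ideals and irreducible ideals coincide as soon as $\CQ$ satisfies the law of double negation. Since the corollary assumes exactly this, the proposition applies and yields $(2)\Leftrightarrow(3)$. Chaining the two equivalences gives $(1)\Leftrightarrow(2)\Leftrightarrow(3)$, which is the claim.

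I do not expect any genuine obstacle: all the substantive work has already been carried out in the proofs of Theorem \ref{G is FC} and of the double negation proposition. The only point worth emphasising is that the two hypotheses play complementary and independent roles. Left continuity of $\&$ is what forces an irreducible ideal to be forward Cauchy, through the directed set $({\rm C}\phi,\sqsubseteq)$ constructed in the proof of Theorem \ref{G is FC}; the law of double negation is what forces flat and irreducible ideals to agree, through the identities $\sub_A(\phi,\varphi)=\neg(\phi\otimes\neg\varphi)$ and $\phi\otimes\psi=\neg(\sub_A(\phi,\neg\psi))$ supplied by Lemma \ref{tensor via sub}. Neither hypothesis alone gives the full three-way equivalence, so both are genuinely invoked, which is why this statement is recorded as a corollary rather than being absorbed into either earlier result.
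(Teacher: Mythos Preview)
Your proposal is correct and matches the paper's intended argument: the corollary is stated without proof precisely because it is the immediate conjunction of Theorem \ref{G is FC} (giving $(1)\Leftrightarrow(2)$) and the preceding proposition on double negation (giving $(2)\Leftrightarrow(3)$). Your remarks on the complementary roles of the two hypotheses are accurate and add useful commentary, but the core argument is exactly what the paper has in mind.
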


\section{Saturatedness}
Let $\Phi$ be a class of weights. A $\CQ$-ordered set $A$ is \emph{$\Phi$-continuous} if it is $\Phi$-complete and the left adjoint $\sup:\Phi(A)\lra A$ of $\y:A\lra\Phi(A)$ has a left adjoint. This kind of postulation is standard in order theory \cite{Wood}. In the case that $\Phi=\CP$ (the largest class of weights), $\Phi$-continuous $\CQ$-ordered sets are the completely distributive (or, totally continuous) $\CQ$-categories in \cite{PZ15,St07}.   We write $\Phi$-{\sf Cont} for the category of $\Phi$-continuous $\CQ$-ordered sets and $\Phi$-cocontinuous maps. The category $\Phi$-{\sf Cont} is the subject of fuzzy domain theory. So, a natural question is whether   such $\CQ$-ordered sets exist. As we will see, saturatedness of $\Phi$ guarantees that there exist enough such things.

A class of weights $\Phi$ is  {\it saturated}
\cite{KS05,LZ07} if for all $\CQ$-ordered set $A$  and $\Lambda\in\Phi(\Phi(A))$, \[\bv_{\phi\in\Phi(A)}\Lambda(\phi)\&\phi\in\Phi(A).\] A category-minded reader will recognize soon that a saturated class of weights is an example of KZ-monads \cite{Kock,Zo76}.

\begin{thm}Let $\Phi$ be a saturated class of weights. \begin{enumerate}[(1)] \item For each $\CQ$-ordered set $A$,   $\Phi(A)$ is $\Phi$-continuous. \item For each $\CQ$-order preserving map $f:A\lra B$, $\Phi(f)$ is $\Phi$-cocontinuous. \item  The functor $\Phi:\CQ$-${\sf Ord}\lra\Phi$-{\sf Cont}, which sends each $\CQ$-order preserving map $f$ to $\Phi(f)$, is a left adjoint of the forgetful functor $\Phi$-${\sf Cont}\lra\CQ$-{\sf Ord}.
\end{enumerate} \end{thm}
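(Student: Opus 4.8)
The plan is to exhibit $\Phi$ as the (lax-idempotent, i.e.\ KZ) monad on $\CQ$-{\sf Ord} with unit $\y$ and multiplication
\[\mu_A:\Phi(\Phi A)\lra\Phi A,\qquad \mu_A(\Lambda)=\bv_{\phi\in\Phi A}\Lambda(\phi)\&\phi,\]
which lands in $\Phi A$ exactly by saturatedness and, by Example \ref{PA is complete}, is nothing but the supremum map. The linchpin observation is that $\mu_A(\Lambda)(a)=\Lambda(\y_A a)$ for all $a\in A$ --- the inequality $\le$ holds since $\Lambda$ is a fuzzy lower set and $\psi(a)=\Phi A(\y_A a,\psi)$, while $\ge$ comes from the summand $\psi=\y_A a$ --- so that $\mu_A$ is the restriction to $\Phi(\Phi A)$ of $(\y_A)^\la:\CP(\Phi A)\lra\CP A$, $(\y_A)^\la(\Lambda)=\Lambda\circ\y_A$. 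For part (1) I would first check $\mu_A\dashv\y_{\Phi A}$, i.e.\ that $\mu_A\Lambda=\sup\Lambda$ in $\Phi A$: a residuation computation from Proposition \ref{2.1} gives
\[\sub_A(\mu_A\Lambda,\psi)=\bw_{\phi\in\Phi A}\big(\Lambda(\phi)\ra\sub_A(\phi,\psi)\big)=\sub_{\Phi A}(\Lambda,\y_{\Phi A}\psi),\]
which is the defining property of the supremum, so $\Phi A$ is $\Phi$-complete with $\sup_{\Phi A}=\mu_A$. For $\Phi$-continuity I must produce a left adjoint of $\sup_{\Phi A}=\mu_A=(\y_A)^\la$; but the Kan adjunction (\ref{kan adjunction}) already gives $(\y_A)^\ra\dashv(\y_A)^\la$, and since $(\y_A)^\ra$ restricts to $\Phi(\y_A)$ (condition (3) of a class of weights) and $(\y_A)^\la$ restricts to $\mu_A$ (saturatedness), this restricts to $\Phi(\y_A)\dashv\sup_{\Phi A}$. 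Hence $\sup_{\Phi A}$ has the left adjoint $\Phi(\y_A)$ and $\Phi A$ is $\Phi$-continuous.

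For part (2) I would verify the naturality square $\Phi(f)\circ\mu_A=\mu_B\circ\Phi(\Phi(f))$ directly. Since $\Phi(f)=f^\ra$ is the restriction of the join- and tensor-preserving map $f^\ra:\CP A\lra\CP B$, applying it to the weighted join defining $\mu_A$ yields $f^\ra(\mu_A\Lambda)=\bv_{\phi}\Lambda(\phi)\&f^\ra(\phi)$. On the other hand $\mu_B(\Phi(f)^\ra\Lambda)=\bv_{\phi}\Lambda(\phi)\&\big(\bv_{\chi\in\Phi B}\Phi B(\chi,f^\ra\phi)\&\chi\big)$, and the co-Yoneda identity $\bv_{\chi\in\Phi B}\Phi B(\chi,f^\ra\phi)\&\chi=f^\ra\phi$ (the summand $\chi=f^\ra\phi$ gives $\ge$, and $p\&(p\ra q)\le q$ from Proposition \ref{2.1}(4) gives $\le$) collapses this to the same expression. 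Thus $\Phi(f)(\mu_A\Lambda)=\mu_B(\Phi(f)^\ra\Lambda)$, which is precisely $\Phi$-cocontinuity of $\Phi(f)$.

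For part (3) I would prove the universal property of $\y_A:A\lra\Phi A$. Given a $\Phi$-continuous (hence $\Phi$-complete) $B$ and a $\CQ$-order preserving $g:A\lra B$, put $\bar g=\sup_B\circ\Phi(g):\Phi A\lra B$, well defined because $\Phi(g)(\phi)=g^\ra(\phi)\in\Phi B$. Then $\bar g$ is $\CQ$-order preserving (a composite of such maps); $\bar g\circ\y_A=g$ because $\Phi(g)\circ\y_A=\y_B\circ g$ (naturality of $\y$) and $\sup_B\circ\y_B=\id_B$; and $\bar g$ is $\Phi$-cocontinuous, being the composite of $\Phi(g)$ (part (2)) with $\sup_B$, the latter $\Phi$-cocontinuous because it is a left adjoint and left adjoints preserve suprema. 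Uniqueness rests on the density identity $\mu_A\circ\Phi(\y_A)=\id_{\Phi A}$, equivalently $\phi=\sup_{\Phi A}\big(\Phi(\y_A)\phi\big)$ for all $\phi\in\Phi A$ (a one-line check using that $\phi$ is a lower set): any $\Phi$-cocontinuous $h$ with $h\circ\y_A=g$ then satisfies $h(\phi)=h\big(\sup_{\Phi A}\Phi(\y_A)\phi\big)=\sup_B\big((h\circ\y_A)^\ra\phi\big)=\sup_B(g^\ra\phi)=\bar g(\phi)$. Thus $g\mapsto\bar g$ and $h\mapsto h\circ\y_A$ are mutually inverse, natural in $A$ and $B$, giving the adjunction $\Phi\dashv U$ with unit $\y$.

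The main obstacle is less any single computation than the bookkeeping of the passage between $\CP$ and $\Phi$: essentially every ingredient is a standard fact about the free cocompletion $\CP$, and the actual content is verifying that the maps and adjunctions involved restrict to the subobjects $\Phi(-)\subseteq\CP(-)$, which is exactly where saturatedness (keeping $\mu_A$ inside $\Phi A$) and condition (3) (identifying $\Phi(f)$ with $f^\ra$) are used. Conceptually everything hinges on the single identity $\sup_{\Phi A}=\mu_A=(\y_A)^\la$, from which $\Phi$-continuity in (1) falls out of the Kan adjunction, together with the density identity $\mu_A\circ\Phi(\y_A)=\id$ underlying uniqueness in (3).
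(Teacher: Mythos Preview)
Your argument is correct. For part (1) you follow exactly the paper's route: the identity $\mu_A(\Lambda)(a)=\Lambda(\y_A a)$, hence $\sup_{\Phi A}=(\y_A)^\la$ restricted, and then the Kan adjunction $(\y_A)^\ra\dashv(\y_A)^\la$ restricts to give the required left adjoint. The difference is in parts (2) and (3): the paper simply cites \cite[Theorem 4.7]{LZ07} (itself a specialization of the enriched-category result of Kelly--Schmitt \cite{KS05}), whereas you unpack the argument explicitly --- the naturality square $\Phi(f)\circ\mu_A=\mu_B\circ\Phi(\Phi f)$ via co-Yoneda for (2), and the universal property of $\y_A$ with the density identity $\mu_A\circ\Phi(\y_A)=\id$ for (3). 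Your version is thus more self-contained and makes the KZ-monad structure visible, at the cost of some length; the paper's version trades that detail for an appeal to the existing literature.
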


\begin{proof}(1) For each   $\Lambda\in\Phi(\Phi(A))$, since $\Phi$ is saturated,  $\bv_{\phi\in\Phi(A)}\Lambda(\phi)\&\phi$ belongs to $\Phi(A)$. It is easy to verify that for all $\psi\in\Phi(A)$, \begin{align*}\CP \Phi(A) (\Lambda, \Phi(A)(-,\psi))
&= \Phi(A)\Big(\bv_{\phi\in\Phi(A)}\Lambda(\phi)\&\phi, \psi\Big) \end{align*}  hence $\bv_{\phi\in\Phi(A)}\Lambda(\phi)\&\phi$ is a supremum of $\Lambda$ in $\Phi(A)$, i.e., \[{\sup}_{\Phi(A)} \Lambda =\bv_{\phi\in\Phi(A)}\Lambda(\phi)\&\phi.\] This shows that $\Phi(A)$ is $\Phi$-complete.

Next, we show that $\Phi(A)$ is $\Phi$-continuous, that is, $\sup_{\Phi(A)}:\Phi(\Phi(A))\lra\Phi(A)$ has a left adjoint. To this end, write $\y_A$ for the Yoneda embedding $A\lra\Phi(A)$. For each $\Lambda\in\Phi(\Phi(A))$, since $\Lambda:\Phi(A) \lra(Q,d_R)$ preserves $\CQ$-order, it follows that  for all $x\in A$ and $\phi\in\Phi(A)$, \[\phi(x)= \sub_A(\y_A(x),\phi)\leq \Lambda(\phi)\ra\Lambda(\y_A(x)),  \] hence \[{\sup}_{\Phi(A)}(\Lambda)(x)= \bv_{\phi\in\Phi(A)}\Lambda(\phi)\&\phi(x) =\Lambda\circ\y_A(x) .\] This means that $\sup_{\Phi(A)}$ is the map obtained by restricting the domain and codomain of \[\y_A^\la:\CP\Phi(A)\lra\CP A\] to $\Phi(\Phi(A))$ and $\Phi(A)$, respectively. Therefore, $\sup_{\Phi(A)}$ has a left adjoint, given by restricting the domain and codomain of   $\y_A^\ra:\CP A\lra\CP\Phi(A)$  to  $\Phi(A)$ and $\Phi(\Phi(A))$, respectively.

(2) and (3) are a special case of  \cite[Theorem 4.7]{LZ07}, which is again  a special case of a general result in category theory \cite{AK,Kelly,KS05}. \end{proof}

The above theorem shows that if $\Phi$ is a saturated class of weights, then for each $\CQ$-ordered set $A$, $\Phi(A)$ is the free $\Phi$-continuous $\CQ$-ordered set generated by $A$.

This section concerns the saturatedness of the classes of forward Cauchy ideals, irreducible ideals, and flat ideals.

A quantale $\CQ=(Q,\&)$ is  completely distributive (continuous, resp.) if the complete lattice $Q$ is a  completely distributive lattice (a continuous lattice, resp.). So, each completely distributive quantale is a continuous quantale and each continuous quantale is a meet continuous quantale.   For continuity   and completely distributivity of complete lattices, the reader is referred  to the monograph \cite{Gierz2003}.

The following proposition was first proved in \cite{FSW} when $\CQ$ is a \emph{completely distributive value quantale}, the version presented below was proved in \cite{LZ06} making use of Lemma \ref{characterization of FC fuzzy lower set}.
\begin{prop} If $\CQ$ is a continuous quantale, then the class of forward Cauchy ideals is saturated. \end{prop}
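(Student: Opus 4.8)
The goal is to show that when $\CQ$ is a continuous quantale, the class $\CW$ of forward Cauchy ideals satisfies the saturatedness condition: for every $\CQ$-ordered set $A$ and every $\Lambda\in\CW(\CW(A))$, the fuzzy lower set $\bv_{\phi\in\CW(A)}\Lambda(\phi)\&\phi$ is again a forward Cauchy ideal of $A$. The plan is to use the reference Lemma \ref{characterization of FC fuzzy lower set} (cited in the statement but not reproduced in this excerpt), which presumably gives an intrinsic characterization of forward Cauchy ideals among fuzzy lower sets in terms of the continuity structure of $\CQ$. First I would unwind the two layers of forward Cauchy data: since $\Lambda$ is a forward Cauchy ideal of $\CW(A)$, there is a forward Cauchy net $\{\phi_i\}$ in $\CW(A)$ with $\Lambda=\bv_i\bw_{j\geq i}\CW(A)(-,\phi_j)$, and each $\phi_i$ is itself generated by a forward Cauchy net $\{x^i_k\}_k$ in $A$.

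The key computational step is to identify the candidate supremum-type expression explicitly. Using the formula from Example \ref{PA is complete} for the supremum in $\CP A$, we have $\bv_{\phi\in\CW(A)}\Lambda(\phi)\&\phi=\sup_{\CP A}\Lambda'$ where $\Lambda'$ is $\Lambda$ viewed inside $\CP(\CW(A))\subseteq\CP(\CP A)$; and by Proposition \ref{yoneda limit as suprema} together with Proposition \ref{3.9}, this supremum is precisely the Yoneda limit of $\{\phi_i\}$ computed in $\CP A$, namely $\bv_i\bw_{j\geq i}\phi_j$. So the target reduces to showing that the Yoneda limit in $\CP A$ of a forward Cauchy net of forward Cauchy ideals is again a forward Cauchy ideal. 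Concretely, I would establish the identity
\[
\bv_{\phi\in\CW(A)}\Lambda(\phi)\&\phi=\bv_i\bw_{j\geq i}\phi_j,
\]
reducing the problem to a statement purely about forward Cauchy ideals in $A$ and their Yoneda limits.

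The heart of the matter is then a diagonalization argument: I must produce a single forward Cauchy net in $A$ generating $\bv_i\bw_{j\geq i}\phi_j$, starting from the doubly-indexed family $\{x^i_k\}$. This is exactly where the continuity of $\CQ$ enters, and I expect it to be the main obstacle. The intrinsic characterization of forward Cauchy ideals (Lemma \ref{characterization of FC fuzzy lower set}) should let me avoid constructing the diagonal net by hand: instead I would verify that $\bv_i\bw_{j\geq i}\phi_j$ satisfies the defining intrinsic condition, using that each $\phi_i$ does and that the operations $\bv_i\bw_{j\geq i}$ interact well with the way-below relation in a continuous lattice. The continuity hypothesis is what guarantees that the relevant suprema of filtered infima can be approximated, so that the characterizing property is preserved under the liminf operation. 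The delicate point will be the interchange of the outer $\bv_i\bw_{j\geq i}$ (coming from $\Lambda$) with the inner forward-Cauchy structure of the $\phi_j$; controlling this interchange is precisely what continuity of $\CQ$ is designed to handle, and I would expect the proof to hinge on approximating elements from below by way-below elements and then applying the characterization levelwise before passing to the limit.

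Finally, I would remark that the role of Lemma \ref{characterization of FC fuzzy lower set} is essential: without an order-theoretic characterization of membership in $\CW(A)$, verifying closure of $\CW$ under the saturatedness operation would require an explicit net construction, which is more cumbersome and where the continuity assumption would be harder to exploit cleanly. Thus the strategy is to reduce saturatedness to the closure of forward Cauchy ideals under Yoneda limits in $\CP A$, and then to verify this closure through the intrinsic characterization, with continuity of $\CQ$ supplying the key approximation.
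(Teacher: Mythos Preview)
Your strategy is correct and matches the paper's own approach: the paper does not give a detailed proof but explicitly points to Lemma \ref{characterization of FC fuzzy lower set} (the way-below characterization of forward Cauchy ideals) as the key tool, citing \cite{LZ07} for the argument. Your reduction $\bv_{\phi\in\CW(A)}\Lambda(\phi)\&\phi=\bv_i\bw_{j\geq i}\phi_j$ is valid (indeed $\bv_{\phi}\Lambda(\phi)\&\phi(x)=\Lambda(\y_A(x))=\bv_i\bw_{j\geq i}\phi_j(x)$ by the Yoneda lemma, more directly than via Example \ref{PA is complete}), and verifying condition (2) of the lemma for this liminf using the way-below approximation in the continuous lattice $Q$ is exactly what the cited proof does.
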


\begin{lem} \label{characterization of FC fuzzy lower set} {\rm(\cite{LZ06})} Let $\CQ$ be a continuous quantale and $\phi$ be an inhabited fuzzy lower set of a $\CQ$-ordered set $A$.  The following are equivalent:
\begin{enumerate}[\rm(1)] \item   $\phi$ is a forward Cauchy ideal.
\item  If $r\ll\phi(x)$ and $s\ll\phi(y)$, then for every
$t\ll 1$, there is some $z\in A$ such that $t\ll\phi(z)$,
 $r\ll A(x,z)$  and $s\ll A(y,z)$.
 \end{enumerate}\end{lem}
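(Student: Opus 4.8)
The plan is to prove the two implications separately. The forward implication $(1)\Rightarrow(2)$ is a direct computation, while $(2)\Rightarrow(1)$ requires manufacturing a forward Cauchy net out of $\phi$ and is where the real work lies.

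For $(1)\Rightarrow(2)$, suppose $\phi=\bv_i\bw_{j\geq i}A(-,x_j)$ for a forward Cauchy net $\{x_i\}$ indexed by a directed set $I$. The key observation is that for fixed $a$ the family $\{\bw_{j\geq i}A(a,x_j)\}_i$ is directed, so $\phi(a)$ is a directed join. Given $r\ll\phi(x)$, I would first interpolate $r\ll r'\ll\phi(x)$ and use directedness to find $i_1$ with $r'\leq A(x,x_j)$, hence $r\ll A(x,x_j)$, for all $j\geq i_1$; similarly I get $i_2$ with $s\ll A(y,x_j)$ for $j\geq i_2$. For the membership clause, given $t\ll 1$ I interpolate $t\ll t'\ll 1$ and use the forward Cauchy condition $\bv_i\bw_{i\leq j\leq k}A(x_j,x_k)=1$ to find $i_3$ with $t'\leq A(x_j,x_k)$ whenever $i_3\leq j\leq k$; then $\phi(x_k)\geq\bw_{j\geq k}A(x_k,x_j)\geq t'$ for $k\geq i_3$, so $t\ll\phi(x_k)$. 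Choosing $k\geq i_1,i_2,i_3$ in the directed set $I$ and setting $z=x_k$ finishes this direction.

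For $(2)\Rightarrow(1)$, the plan is to build a forward Cauchy net whose generated fuzzy lower set is $\phi$, in the spirit of the construction in the proof of Theorem \ref{G is FC}. I would index the net by the set $D$ of \emph{approximating pairs} $(x,r)$ with $r\ll\phi(x)$, set $\mathfrak{x}(x,r)=x$, and equip $D$ with a preorder for which condition (2) is precisely the assertion that $D$ is directed: given $(x,r),(y,s)\in D$, condition (2) (applied after interpolating the parameters upward, and using that $r\vee s\ll 1$ since $\ll$ is closed under finite joins in its upper argument) produces a common upper bound. Transitivity of the preorder should be arranged through the residuation law $p\ra(q\ra r)=(p\& q)\ra r$ of Proposition \ref{2.1}(3) together with the transitivity inequality $A(x,y)\& A(y,z)\leq A(x,z)$, exactly as in Theorem \ref{G is FC}. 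One then checks that $\mathfrak{x}$ is forward Cauchy and that $\bv_{(x,r)}\bw_{(y,s)\succeq(x,r)}A(-,y)$ recovers $\phi$; both verifications rely on the continuity of $Q$ through the approximation $a=\bv_{b\ll a}b$ and the inhabitedness $\bv_x\phi(x)=1$, which force the relevant joins up to $1$ and down to $\phi(a)$.

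The main obstacle, and the reason continuity of $\CQ$ is indispensable, is the delicate interaction between the way-below relation $\ll$ appearing in condition (2) and the quantale operations $\&$ and $\ra$: the naive order ``$r\leq s$ and $r\ll A(x,y)$'' is \emph{not} transitive, since products under $\&$ decrease and $\ll$ need not be multiplicative, whereas the residuation order that \emph{is} transitive does not mesh with the $\ll$-clauses of (2) in a purely formal way. Reconciling these — choosing the order so that directedness genuinely follows from (2) while transitivity follows from residuation, and then pushing the generation identity through — is the technical heart of the argument, and is exactly the point at which interpolation and the approximation $a=\bv_{b\ll a}b$ of a continuous lattice must be deployed with care.
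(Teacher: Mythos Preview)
The paper does not prove this lemma; it is quoted from \cite{LZ06}. So there is no in-paper argument to compare against, and your proposal must be assessed on its own terms.

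Your argument for $(1)\Rightarrow(2)$ is correct and complete.

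For $(2)\Rightarrow(1)$ your overall strategy---index a net by approximating pairs $(x,r)$ with $r\ll\phi(x)$ and project to the first coordinate---is the standard one and is right. However, the specific preorder you propose to import from Theorem~\ref{G is FC}, namely $(x,r)\sqsubseteq(y,s)\iff A(x,y)\ra r\leq s$, does \emph{not} give directedness from condition (2) in a general continuous quantale. The problem is that (2) hands you $r\ll A(x,z)$, but in a non-linear $Q$ this says nothing useful about $A(x,z)\ra r$: for instance if $A(x,z)=1$ then $A(x,z)\ra r=r$, and there is no reason for $r$ to be way below $\phi(z)$. The proof of Theorem~\ref{G is FC} escapes this only because in $[0,1]$ one can argue with strict inequalities and totality.

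The order that does work is $(x,r)\sqsubseteq(y,s)\iff r\leq A(x,y)\,\&\, s$. Transitivity is immediate from $A(x,y)\&A(y,z)\leq A(x,z)$ and associativity of $\&$. For directedness, given $(x,r)$ and $(y,s)$, interpolate $r\ll r'\ll\phi(x)$ and $s\ll s'\ll\phi(y)$; since $r\ll r'=r'\&1=\bv_{t\ll1}r'\& t$ is a directed join, choose $t\ll1$ with $r\leq r'\& t$ and $s\leq s'\& t$; then apply (2) to $r',s',t$ to obtain $z$ with $t\ll\phi(z)$, $r'\leq A(x,z)$, $s'\leq A(y,z)$, so that $(z,t)$ is the required common upper bound. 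With this order the forward-Cauchy and generation checks go through along the lines you sketch, using inhabitedness and the approximation $a=\bv_{b\ll a}b$. So your plan is correct in outline; only the choice of preorder needs this adjustment.
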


The saturatedness of the classes of flat ideals and irreducible ideals   is a special case of a general result in enriched category theory, namely,  \cite[Proposition 5.4]{KS05}. However, in order to make this paper self-contained, a direct verification in this special case is included here.

\begin{prop}The class   of irreducible ideals is saturated. \end{prop}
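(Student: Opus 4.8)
The plan is to show that for a fixed $\CQ$-ordered set $A$ and $\Lambda\in\CI(\CI(A))$, the fuzzy lower set $\Psi:=\bv_{\phi\in\CI(A)}\Lambda(\phi)\&\phi$ is again an irreducible ideal of $A$. That $\Psi$ is a fuzzy lower set of $A$ is immediate, since each $\Lambda(\phi)\&\phi$ is one and $\CP A$ is closed under joins. Inhabitedness is equally quick: using that $\&$ has unit $1$ together with the inhabitedness of each $\phi\in\CI(A)$ and of $\Lambda$, one computes $\bv_{x\in A}\Psi(x)=\bv_{\phi\in\CI(A)}\Lambda(\phi)\&\bv_{x\in A}\phi(x)=\bv_{\phi\in\CI(A)}\Lambda(\phi)=1$. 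So the whole matter reduces to verifying irreducibility of $\Psi$.

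The key step is to transport the inclusion degrees out of $A$ and into $\CI(A)$. For any fuzzy lower set $\lambda$ of $A$, define $\hat\lambda:\CI(A)\lra Q$ by $\hat\lambda(\phi)=\sub_A(\phi,\lambda)$; transitivity of the fuzzy inclusion order on $\CP A$ shows $\hat\lambda$ is a fuzzy lower set of $\CI(A)$. I claim that
\[\sub_A(\Psi,\lambda)=\sub_{\CI(A)}(\Lambda,\hat\lambda).\]
Indeed, expanding $\sub_A(\Psi,\lambda)=\bw_{x\in A}\big((\bv_{\phi\in\CI(A)}\Lambda(\phi)\&\phi(x))\ra\lambda(x)\big)$ and applying Proposition \ref{2.1}(5) to split the join, then Proposition \ref{2.1}(3) to rewrite $(\Lambda(\phi)\&\phi(x))\ra\lambda(x)$ as $\Lambda(\phi)\ra(\phi(x)\ra\lambda(x))$, and finally Proposition \ref{2.1}(6) to pull $\bw_{x\in A}$ through $\Lambda(\phi)\ra(-)$, yields $\bw_{\phi\in\CI(A)}\big(\Lambda(\phi)\ra\sub_A(\phi,\lambda)\big)$, which is exactly $\sub_{\CI(A)}(\Lambda,\hat\lambda)$.

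With this identity in hand, irreducibility of $\Psi$ follows by a two-step reduction. First, for fuzzy lower sets $\phi_1,\phi_2$ of $A$ I would check the pointwise identity $\widehat{\phi_1\vee\phi_2}=\hat\phi_1\vee\hat\phi_2$ in $\CI(A)$: evaluated at $\phi\in\CI(A)$ this reads $\sub_A(\phi,\phi_1\vee\phi_2)=\sub_A(\phi,\phi_1)\vee\sub_A(\phi,\phi_2)$, which holds precisely because every member $\phi$ of $\CI(A)$ is an irreducible ideal. Second, applying the displayed identity and then the irreducibility of the outer weight $\Lambda$ (which is a genuine irreducible ideal of $\CI(A)$) gives
\[\sub_A(\Psi,\phi_1\vee\phi_2)=\sub_{\CI(A)}(\Lambda,\hat\phi_1\vee\hat\phi_2)=\sub_{\CI(A)}(\Lambda,\hat\phi_1)\vee\sub_{\CI(A)}(\Lambda,\hat\phi_2)=\sub_A(\Psi,\phi_1)\vee\sub_A(\Psi,\phi_2),\]
so $\Psi$ is irreducible. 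The one place demanding care --- and the crux of the argument --- is the residuation computation establishing $\sub_A(\Psi,\lambda)=\sub_{\CI(A)}(\Lambda,\hat\lambda)$, since it is what converts the join over $\CI(A)$ defining $\Psi$ into a single inclusion degree computed in $\CI(A)$; everything else is the observation that irreducibility of the outer weight $\Lambda$ and of the inner weights $\phi$ compose to deliver irreducibility of $\Psi$.
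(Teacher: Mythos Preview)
Your proof is correct and follows essentially the same approach as the paper: the paper also reduces to the identity $\sub_A(\sup\Lambda,\phi)=\sub_{\CI A}(\Lambda,\sub_A(-,\phi))$ (your $\hat\lambda$ is exactly $\sub_A(-,\lambda)$ restricted to $\CI A$), then uses irreducibility of the members of $\CI A$ to get $\widehat{\phi_1\vee\phi_2}=\hat\phi_1\vee\hat\phi_2$ and irreducibility of $\Lambda$ to split the join. Your write-up is a bit more explicit about which residuation identities from Proposition~\ref{2.1} are invoked, but the structure is identical.
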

\begin{proof}It suffices to show that for each $\CQ$-ordered set $A$ and each irreducible ideal $\Lambda:\mathcal{I} A\lra\CQ$ of $(\mathcal{I} A,\sub_A)$, the map $\sup\Lambda:A\lra\CQ $, given by $$\sup\Lambda(x) = \bv_{\phi\in\mathcal{I} A} \Lambda(\phi)\&\phi(x) ,$$ is an  irreducible ideal of $A$.

\textbf{Step 1}. $\bv_{x\in A}\sup\Lambda(x)=1$. This is easy since \[\bv_{x\in A}\sup\Lambda(x) = \bv_{x\in A}\bv_{\phi\in\mathcal{I} A} \Lambda(\phi)\&\phi(x)  = \bv_{\phi\in\mathcal{I} A}\bv_{x\in A} \Lambda(\phi)\&\phi(x) =\bv_{\phi\in\mathcal{I} A} \Lambda(\phi) =1. \]

\textbf{Step 2}. For any  fuzzy lower sets $\phi_1, \phi_2$ of $A$, \[\sub_A(\sup\Lambda,  \phi_1\vee \phi_2)= \sub_A(\sup\Lambda,  \phi_1)\vee \sub_A(\sup\Lambda,\phi_2).\]

To see this, for a  fuzzy lower set $\phi$ of $A$, consider the fuzzy lower set of $(\mathcal{I} A,\sub_A)$: \[\sub_A(-,\phi): \mathcal{I} A\lra\CQ .\]   Then
  \begin{align*}\sub_{\mathcal{I} A}(\Lambda,\sub_A(-,\phi))&= \bw_{\psi\in\mathcal{I} A} (\Lambda(\psi)\ra\sub_A(\psi,\phi))\\ &= \bw_{\psi\in\mathcal{I} A}\Big(\Lambda(\psi)\ra\bw_{x\in A}(\psi(x)\ra \phi(x))\Big)\\ &=\bw_{x\in A}\Big(  \Big(\bv_{\psi\in\mathcal{I} A}\Lambda(\psi)\& \psi(x)\Big)\ra \phi(x)\Big) \\   & =\sub_A(\sup\Lambda,\phi). \end{align*} Therefore, \begin{align*}
 \sub_A(\sup\Lambda,  \phi_1\vee \phi_2)
 &=\sub_{\mathcal{I} A}(\Lambda, \sub_A(-, \phi_1\vee \phi_2))\\
 &=\sub_{\mathcal{I} A}(\Lambda, \sub_A(-, \phi_1)\vee \sub_A(-,\phi_2)) \\
 &= \sub_{\mathcal{I} A}(\Lambda, \sub_A(-, \phi_1))\vee\sub_{\mathcal{I} A}(\Lambda, \sub_A(-, \phi_2))\\
 &=\sub_A(\sup\Lambda,  \phi_1)\vee \sub_A(\sup\Lambda,\phi_2),
\end{align*} where the second equality holds since each element in $\mathcal{I} A$ is irreducible; the reason for the third equality is that $\Lambda$ is irreducible.
 \end{proof}

\begin{prop}The class   of flat ideals is saturated. \end{prop}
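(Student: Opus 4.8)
The plan is to mirror, in the dual key, the proof just given for irreducible ideals: where that argument used the fuzzy lower set $\sub_A(-,\phi):\CF A\lra\CQ$ together with the subset degree $\sub_A$, here I will use the fuzzy upper set $-\otimes\psi:\CF A\lra\CQ$ (a legitimate fuzzy upper set of $\CF A$, being the restriction to $\CF A\subseteq\CP A$ of the one supplied by Equation~\eqref{compositon as distributor}) together with the intersection degree $\otimes$. Concretely, fix a $\CQ$-ordered set $A$ and a flat ideal $\Lambda$ of $(\CF A,\sub_A)$, and put $\sup\Lambda(x)=\bv_{\phi\in\CF A}\Lambda(\phi)\&\phi(x)$; I must show $\sup\Lambda$ is a flat ideal of $A$. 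That $\sup\Lambda$ is inhabited follows exactly as in Step~1 of the preceding proof, since
\[
\bv_{x\in A}\sup\Lambda(x)=\bv_{\phi\in\CF A}\Big(\Lambda(\phi)\&\bv_{x\in A}\phi(x)\Big)=\bv_{\phi\in\CF A}\Lambda(\phi)=1,
\]
using that each $\phi$ and $\Lambda$ are inhabited.

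The technical heart is the transfer identity
\[
\sup\Lambda\otimes\psi=\Lambda\otimes(-\otimes\psi)\qquad\text{for every fuzzy upper set }\psi\text{ of }A,
\]
where on the right $-\otimes\psi$ is regarded as a fuzzy upper set of $\CF A$ and $\Lambda\otimes(-\otimes\psi)$ is the tensor computed in $\CF A$. Unfolding $\otimes=\bv(-)\&(-)$ on both sides, each reduces to the double join $\bv_{\phi\in\CF A}\bv_{x\in A}\Lambda(\phi)\&\phi(x)\&\psi(x)$, so the identity is nothing but a reordering of joins together with associativity of $\&$ and its distribution over joins; no hypothesis on $\CQ$ is needed. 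This plays the role that $\sub_{\CF A}(\Lambda,\sub_A(-,\phi))=\sub_A(\sup\Lambda,\phi)$ played in the irreducible case.

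With the transfer identity in hand, flatness of $\sup\Lambda$ follows in three moves. First, since every $\phi\in\CF A$ is itself flat, the equality $\phi\otimes(\psi_1\wedge\psi_2)=(\phi\otimes\psi_1)\wedge(\phi\otimes\psi_2)$ holds for each $\phi$, i.e. $-\otimes(\psi_1\wedge\psi_2)=(-\otimes\psi_1)\wedge(-\otimes\psi_2)$ as fuzzy upper sets of $\CF A$. Second, because $\Lambda$ is a flat ideal of $\CF A$, it preserves the binary meet of these two fuzzy upper sets:
\[
\Lambda\otimes\big((-\otimes\psi_1)\wedge(-\otimes\psi_2)\big)=\big(\Lambda\otimes(-\otimes\psi_1)\big)\wedge\big(\Lambda\otimes(-\otimes\psi_2)\big).
\]
Third, applying the transfer identity at each of the three occurrences of a tensor with $\Lambda$ converts this into $\sup\Lambda\otimes(\psi_1\wedge\psi_2)=(\sup\Lambda\otimes\psi_1)\wedge(\sup\Lambda\otimes\psi_2)$, which is exactly flatness of $\sup\Lambda$.

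I expect no genuine obstacle here; the argument is forced once one identifies the correct dual ingredients. The only point requiring care is the bookkeeping of levels---keeping straight which $\otimes$ lives in $A$ and which in $\CF A$, and verifying at the outset that each $-\otimes\psi_i$ is indeed a fuzzy upper set of $\CF A$, so that the flatness of $\Lambda$ may legitimately be invoked with these as its two inputs.
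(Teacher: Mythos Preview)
Your proposal is correct and follows essentially the same approach as the paper: both establish the transfer identity $\sup\Lambda\otimes\psi=\Lambda\otimes(-\otimes\psi)$ by unwinding the joins, then use flatness of each $\phi\in\CF A$ to rewrite $-\otimes(\psi_1\wedge\psi_2)$ as $(-\otimes\psi_1)\wedge(-\otimes\psi_2)$, and finally invoke flatness of $\Lambda$ to split the meet. The only difference is presentational---you articulate the three moves more explicitly and flag the level bookkeeping, while the paper compresses them into a single chain of equalities.
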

\begin{proof} We only need to  show that for each $\CQ$-ordered set $A$ and each flat ideal $\Lambda:\CF A\lra\CQ$  of $(\CF A,\sub_A)$, the map  $\sup\Lambda:A\lra\CQ $, given by $$\sup\Lambda(x) = \bv_{\phi\in\CF A} \Lambda(\phi)\&\phi(x) ,$$ is a flat ideal of $A$.

\textbf{Step 1}. $\bv_{x\in A}\sup\Lambda(x)=1$. This is easy since \[\bv_{x\in A}\sup\Lambda(x) = \bv_{x\in A}\bv_{\phi\in\CF A} \Lambda(\phi)\&\phi(x)  = \bv_{\phi\in\CF A}\bv_{x\in A} \Lambda(\phi)\&\phi(x) =\bv_{\phi\in\CF A} \Lambda(\phi) =1. \]

\textbf{Step 2}. For all fuzzy upper sets $\psi_1, \psi_2$ of $A$, \[\sup\Lambda \otimes(\psi_1\wedge \psi_2)= (\sup\Lambda\otimes\psi_1)\wedge (\sup\Lambda\otimes\psi_2).\]

To see this, for each fuzzy upper set $\psi$ on $A$, consider the fuzzy upper set  of $(\CF A,\sub_A)$ (see Equation (\ref{compositon as distributor})):  \[-\otimes\psi: \CF A\lra\CQ. \]
Then \begin{align*}\Lambda\otimes(-\otimes \psi)
&= \bv_{\phi\in\CF A}\Big(\Lambda(\phi)\&\bv_{x\in A}(\phi(x)\& \psi(x))\Big)\\ &=\bv_{x\in A}  \bv_{\phi\in\CF A}(\Lambda(\phi)\& \phi(x))\& \psi(x) \\ &=\bv_{x\in A} \sup\Lambda(x)\& \psi(x)  \\ & =\sup\Lambda\otimes\psi. \end{align*} Therefore, \begin{align*}
 \sup\Lambda \otimes(\psi_1\wedge \psi_2)&=\Lambda\otimes( -\otimes(\psi_1 \wedge  \psi_2))\\
 &=\Lambda\otimes((-\otimes \psi_1)\wedge (-\otimes\psi_2))\\
 &= (\Lambda\otimes(-\otimes \psi_1))\wedge (\Lambda\otimes(-\otimes\psi_2))\\
 &=(\sup\Lambda\otimes\psi_1)\wedge( \sup\Lambda\otimes\psi_2).
\end{align*}

The proof is completed.
\end{proof}

\section{Scott $\CQ$-topology and Scott $\CQ$-cotopology}

The connection between partially ordered sets and topological spaces is the essence of domain theory. The fuzzy version of Alexandroff topology has been investigated in \cite{LZ06}. This section concerns the extension of Scott topology to the fuzzy setting.

We recall some basic definitions first.
A $\CQ$-topology  on a set $X$ is a subset $\tau$ of   $Q^X$  subject to the following conditions: \begin{enumerate} \item[(O1)] $p_X\in\tau$ for all $p\in Q$; \item[(O2)] $\lam\wedge \mu\in\tau$ for all $\lam,\mu\in\tau$; \item[(O3)] $\bv_{j\in J}\lam_j\in\tau$ for each subset $\{\lam_j\}_{j\in J}$ of $\tau$.\end{enumerate}

For a $\CQ$-topological space $(X,\tau)$, elements in $\tau$ are said to be open. A  $\CQ$-topology $\tau$ is \emph{stratified}   \cite{HS95,HS99} if \begin{enumerate} \item[(O4)] $p\&\lam \in\tau$ for all $p\in Q$ and $\lam\in \tau$.\end{enumerate}
A  $\CQ$-topology $\tau$ is \emph{co-stratified}   \cite{CLZ11} if \begin{enumerate} \item[(O5)] $p\ra\lam \in\tau$ for all $p\in Q$ and $\lam\in \tau$.\end{enumerate}
A  $\CQ$-topology   is \emph{strong} \cite{CLZ11,Zhang07} if it is both stratified and co-stratified.

A  $\CQ$-cotopology  on a set $X$ is a subset
$\tau$ of $Q^X$  subject to the following conditions:
\begin{enumerate}
\item[(C1)] $p_X\in\tau$ for all $p\in Q$;
\item[(C2)] $\lam\vee \mu\in\tau$ for all $\lam, \mu\in\tau$;
\item[(C3)]
$\bw_{j\in J} \lam_j\in\tau$ for each subset $\{\lam_j\}_{j\in J}$ of
$\tau$.\end{enumerate}

For a $\CQ$-cotopological space $(X,\tau)$, elements in $\tau$ are said to be closed. A  $\CQ$-cotopology $\tau$ is \emph{stratified} if
\begin{enumerate}
\item[(C4)] $p\ra \lam\in\tau$ for all $p\in Q$ and $\lam\in\tau$.
\end{enumerate}
A $\CQ$-cotopology $\tau$ is \emph{co-stratified} if \begin{enumerate} \item[(C5)] $p\&\lam\in\tau$ for all $p\in Q$ and $\lam\in\tau$. \end{enumerate}
A $\CQ$-cotopology $\tau$ is \emph{strong} \cite{CLZ11,Zhang07} if it is both stratified and co-stratified.

Let $\CQ$ be a quantale that satisfies the law of double negation. If $\tau$ is a stratified  (co-stratified, resp.) $\CQ$-cotopology on a set $X$, then  \[\neg(\tau)=\{\neg \lam \mid \lam\in\tau\}  \] is a stratified (co-stratified, resp.) $\CQ$-topology on $X$, where $\neg \lam (x)=\neg(\lam(x))$ for all $x\in X$. Conversely, if $\tau$ is a stratified  (co-stratified, resp.) $\CQ$-topology on  $X$, then   \[\neg(\tau)=\{\neg \lam \mid \lam\in\tau\}  \] is a stratified (co-stratified, resp.)  $\CQ$-cotopology on $X$.

In   general, there does not exist a natural way to switch between closed sets and open sets, so, we need to consider the open-set  version and the closed-set version when generalizing Scott topology to $\CQ$-ordered sets. As demonstrated below, flat ideals  and irreducible ideals are related to the open-set  and the closed-set version, respectively.

\begin{defn}
Let $\Phi$ be a class of weights and $A$ be a $\CQ$-ordered set. \begin{enumerate}[\rm(1)]
\item  A  fuzzy set  $\psi: A\lra Q$  is  $\Phi$-open  if it is a fuzzy upper set and  for all $\phi\in\Phi(A)$, \[\psi(\sup\phi)\leq \phi\otimes\psi
    \] whenever $\sup\phi$ exists. \item A fuzzy  set  $\lam:A\lra Q$   is  $\Phi$-closed if it is a fuzzy lower set and for  all $\phi\in\Phi(A)$, \[{\rm sub}_A(\phi,\lam)\leq \lam(\sup\phi) \]  whenever $\sup\phi$ exists. \end{enumerate} \end{defn}

Let $\psi$ be a fuzzy upper set  of $A$. Since $\phi\otimes\psi=\sup\psi^\ra(\phi)$ by Example \ref{intersection as sup}, then $\phi\otimes\psi\leq \psi(\sup\phi)$, hence a fuzzy upper set $\psi$  is $\Phi$-open if and only if \begin{equation*}
\psi(\sup\phi)= \phi\otimes\psi\end{equation*}for all $\phi\in\Phi(A)$,  if and only if $\psi:A\lra(Q,d_L)$  is $\Phi$-cocontinuous in the sense that $\psi(\sup\phi)=\sup\psi^\ra(\phi)$ for all $\phi\in\Phi(A)$. Similarly,   a fuzzy lower set $\lam$   is $\Phi$-closed if and only if \begin{equation}\label{Scott closed =}{\rm sub}_A(\phi,\lam)= \lam(\sup\phi)\end{equation} for all $\phi\in\Phi(A)$,  if and only if $\lam: A\lra (Q,d_R)$  is $\Phi$-cocontinuous.

\begin{prop} \label{open sets} Let $\Phi$ be a class of weights. \begin{enumerate}[(1)] \item Each constant fuzzy set is $\Phi$-open. \item If $\psi$ is a $\Phi$-open, then so is $p\&\psi$ for all $p\in Q$. \item The join  of a set of $\Phi$-open fuzzy sets  is   $\Phi$-open. \item If   $\Phi$ is a subclass of flat ideals, then the meet of two  $\Phi$-open fuzzy sets   is   $\Phi$-open.
\end{enumerate} \end{prop}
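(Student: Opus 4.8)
The plan is to work throughout with the reformulation stated just before the proposition: a fuzzy upper set $\psi$ is $\Phi$-open precisely when $\psi(\sup\phi)=\phi\otimes\psi$ for every $\phi\in\Phi(A)$ for which $\sup\phi$ exists (the inequality $\phi\otimes\psi\leq\psi(\sup\phi)$ being automatic by Example \ref{intersection as sup}). Since $\phi\otimes\psi=\bv_{x\in A}\phi(x)\&\psi(x)$, the entire proposition reduces to understanding how the operation $\phi\otimes(-)$ interacts with constants, with tensoring by a scalar, with joins, and with binary meets of fuzzy upper sets. The first three interactions are automatic because $\&$ distributes over arbitrary joins and is commutative; the fourth is not, and this is exactly where flatness will enter. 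In each part I first check that the fuzzy set produced is again a fuzzy upper set, and then verify the defining equality.

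For (1), a constant fuzzy set $p_A$ is a fuzzy upper set since $A(x,y)\&p\leq p$. Using that the weights in question are inhabited (as holds for forward Cauchy, flat and irreducible ideals), $\bv_{x\in A}\phi(x)=1$, so by distributivity $\phi\otimes p_A=\bv_{x\in A}\phi(x)\&p=\big(\bv_{x\in A}\phi(x)\big)\&p=p=p_A(\sup\phi)$. For (2), $p\&\psi$ is a fuzzy upper set by the lemma asserting that $p\&\psi$ is again of the same type; then commutativity and distributivity give $\phi\otimes(p\&\psi)=\bv_{x\in A}p\&\phi(x)\&\psi(x)=p\&(\phi\otimes\psi)=p\&\psi(\sup\phi)=(p\&\psi)(\sup\phi)$, the middle equality being $\Phi$-openness of $\psi$. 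For (3), a join $\bv_j\psi_j$ of fuzzy upper sets is a fuzzy upper set, and interchanging the two joins yields $\phi\otimes\big(\bv_j\psi_j\big)=\bv_j(\phi\otimes\psi_j)=\bv_j\psi_j(\sup\phi)=\big(\bv_j\psi_j\big)(\sup\phi)$.

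The crux is (4). Here $\psi_1\wedge\psi_2$ is again a fuzzy upper set (by monotonicity of $\&$), and since meets in $Q^A$ are pointwise, $(\psi_1\wedge\psi_2)(\sup\phi)=\psi_1(\sup\phi)\wedge\psi_2(\sup\phi)=(\phi\otimes\psi_1)\wedge(\phi\otimes\psi_2)$, using that $\psi_1$ and $\psi_2$ are $\Phi$-open. The remaining step is to identify $(\phi\otimes\psi_1)\wedge(\phi\otimes\psi_2)$ with $\phi\otimes(\psi_1\wedge\psi_2)$, and this is exactly the defining property of a flat ideal. Because $\Phi$ is assumed to be a subclass of flat ideals, every $\phi\in\Phi(A)$ is flat, so the identity holds and $\psi_1\wedge\psi_2$ is $\Phi$-open. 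I expect this to be the only genuinely nontrivial point: the operation $\phi\otimes(-)$ preserves joins for free but preserves binary meets only when $\phi$ is flat, which is precisely why the hypothesis $\Phi$ is a subclass of $\CF$ is imposed in (4) and absent from (1)--(3).
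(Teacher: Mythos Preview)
Your argument is correct, and the paper states this proposition without proof, so there is no alternative approach to compare against. Your identification of (4) as the only nontrivial step, and the reason flatness is needed precisely there, is exactly the point.

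One remark on your caveat in (1): you are right to flag inhabitedness. As written, the proposition allows an arbitrary class of weights (Definition~\ref{class}), which does not require every $\phi\in\Phi(A)$ to satisfy $\bigvee_{x}\phi(x)=1$; without that, the equality $\phi\otimes p_A=p$ can fail. Your parenthetical that this holds for forward Cauchy, flat and irreducible ideals is the correct fix for the classes the paper actually cares about, and the same implicit assumption is needed for item (1) of the companion Proposition~\ref{closed sets}. So your proof is complete under the (tacit) hypothesis that all weights in $\Phi(A)$ are inhabited, which the paper evidently intends.
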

Thus, if $\Phi$ is a subclass of flat ideals, then for each $\CQ$-ordered set $A$, the $\Phi$-open fuzzy sets of $A$ form a  stratified $\CQ$-topology  on $A$, called the $\Phi$-Scott $\CQ$-topology and  denoted by $\sigma_\Phi(A)$.

\begin{prop} \label{closed sets} Let $\Phi$ be a class of weights. \begin{enumerate}[(1)] \item Each constant fuzzy set is $\Phi$-closed. \item If $\psi$ is a $\Phi$-closed fuzzy set of $A$, then so is $p\ra\psi$ for all $p\in Q$. \item The meet  of a set of $\Phi$-closed fuzzy sets  is $\Phi$-closed. \item If   $\Phi$ is a subclass of irreducible ideals, then the join  of two $\Phi$-closed fuzzy sets is $\Phi$-closed.
\end{enumerate} \end{prop}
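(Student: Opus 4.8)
The plan is to check, clause by clause, the two requirements in the definition of a $\Phi$-closed fuzzy set: that the fuzzy set is a fuzzy lower set, and that $\sub_A(\phi,\lam)\le\lam(\sup\phi)$ for every $\phi\in\Phi(A)$ whose supremum exists. The fuzzy-lower-set half is routine in every case: a constant fuzzy set is trivially a fuzzy lower set; $p\ra\psi$ is again a fuzzy lower set by the elementary lemma of Section~2 ensuring this; a meet of fuzzy lower sets is one; and a join of two fuzzy lower sets is one because $\&$ distributes over $\vee$. So the real content is the $\sub_A$-inequality, and the uniform idea is to commute $\sub_A(\phi,-)$ with the operation at hand by means of the residuation identities in Proposition~\ref{2.1}, thereby reducing to the assumed closedness of the constituent fuzzy sets.

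For (1), writing $p_A$ for the constant fuzzy set with value $p$ and using that $\phi$ is inhabited, Proposition~\ref{2.1}(5),(1) give $\sub_A(\phi,p_A)=\bw_{x}(\phi(x)\ra p)=\big(\bv_{x}\phi(x)\big)\ra p=1\ra p=p=p_A(\sup\phi)$. For (2), Proposition~\ref{2.1}(3) rewrites $\phi(x)\ra(p\ra\psi(x))=p\ra(\phi(x)\ra\psi(x))$, and then Proposition~\ref{2.1}(6) pulls $p\ra(-)$ outside the meet, giving $\sub_A(\phi,p\ra\psi)=p\ra\sub_A(\phi,\psi)$; since $\psi$ is $\Phi$-closed and $p\ra(-)$ is monotone, this is $\le p\ra\psi(\sup\phi)=(p\ra\psi)(\sup\phi)$. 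For (3), Proposition~\ref{2.1}(6) again commutes the meet past the residuation, $\sub_A(\phi,\bw_{j}\lam_j)=\bw_{j}\sub_A(\phi,\lam_j)$, and closedness of each $\lam_j$ bounds this by $\bw_{j}\lam_j(\sup\phi)=(\bw_{j}\lam_j)(\sup\phi)$.

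The crux is (4), and this is the only place the hypothesis $\Phi\subseteq\CI$ is used. For $\phi\in\Phi(A)$, being an irreducible ideal means exactly that $\sub_A(\phi,\phi_1\vee\phi_2)=\sub_A(\phi,\phi_1)\vee\sub_A(\phi,\phi_2)$ for all fuzzy lower sets $\phi_1,\phi_2$. Taking $\phi_i=\lam_i$ and then invoking the closedness of $\lam_1,\lam_2$ yields $\sub_A(\phi,\lam_1\vee\lam_2)=\sub_A(\phi,\lam_1)\vee\sub_A(\phi,\lam_2)\le\lam_1(\sup\phi)\vee\lam_2(\sup\phi)=(\lam_1\vee\lam_2)(\sup\phi)$. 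I expect this to be the main, if short, obstacle: in contrast to (1)--(3), $\sub_A(\phi,-)$ does not distribute over binary joins for a general weight, so no residuation identity suffices, and it is precisely the irreducibility of the weights in $\Phi$ that licenses the split. This is the exact dual of the role flatness plays in the open/meet clause of Proposition~\ref{open sets}, where $\phi\otimes(-)$ fails to commute with meets for arbitrary weights.
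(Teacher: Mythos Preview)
Your argument is correct and is precisely the routine verification the paper has in mind; the paper itself omits the proof of this proposition entirely, simply stating it and moving on to the consequence that the $\Phi$-closed sets form a stratified $\CQ$-cotopology. One small caveat: in part (1) you invoke that $\phi$ is inhabited, but the abstract definition of a class of weights (Definition~\ref{class}) does not require this; the statement as written is only correct for classes of inhabited weights, which is harmless here since all three classes $\mathcal{W}$, $\CF$, $\CI$ under discussion consist of inhabited fuzzy lower sets, but it is worth flagging that the proposition tacitly relies on this.
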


Thus, if  $\Phi$ is a subclass of irreducible ideals, then  for each $\CQ$-ordered set $A$, the $\Phi$-closed fuzzy sets form a stratified $\CQ$-cotopology on $A$, called the $\Phi$-Scott $\CQ$-cotopology on $A$ and  denoted by $\sigma^{\rm co}_\Phi(A)$.


\begin{con} For the class $\CF$ of all flat ideals, we say fuzzy Scott open sets (Scott $\CQ$-topology, resp.) instead  of $\CF$-open fuzzy sets ($\CF$-Scott $\CQ$-topology, resp.), and write $\sigma(A)$ instead of $\sigma_\CF(A)$. Dually, For the class $\CI$ of all irreducible ideals, we   say fuzzy Scott closed sets (Scott $\CQ$-cotopology, resp.) instead  of $\CI$-closed fuzzy sets ($\CI$-Scott $\CQ$-cotopology, resp.), and write $\sigma^{\rm co}(A)$ instead of $\sigma^{\rm co}_\CI(A)$. \end{con}

\begin{rem}  (1)  Let $\{x_i\}$ be a forward Cauchy net in  a $\CQ$-ordered set $A$  and \[\varphi=\bv_i\bw_{j\geq i}A(-,x_j).\] For each fuzzy upper set $\psi$ of $A$, by the argument of Theorem \ref{FC is flat}, we have \[\varphi\otimes\psi=\bv_i\bw_{j\geq i}\psi(x_j).\]  Thus, a fuzzy upper set $\psi$ of $A$ is $\mathcal{W}$-open if and only if  \[\bv_i\bw_{j\geq i}\psi(x_j)\geq\psi(x)\] for every forward Cauchy net $\{x_i\}$ with a Yoneda limit $x$. This shows that $\mathcal{W}$-open fuzzy sets are  the Scott open fuzzy sets  in the sense of Wagner  \cite[Definition 4.1]{Wagner97}. In particular, if $\CQ$ is meet continuous,  then   $\mathcal{W}$-open fuzzy sets  of $A$ form a stratified $\CQ$-topology on $A$.

(2) Lemma 4.6 in \cite{Wagner97} claims that if $\phi,\psi$ are $\mathcal{W}$-open fuzzy sets of a $\CQ$-ordered set $A$, then so is the fuzzy set $\phi\&\psi:A\lra Q$ given by $(\phi\&\psi)(x)= \phi(x)\&\psi(x)$. This is not true in general. Let $\CQ$ be the unit interval $[0,1]$ equipped with the product t-norm $\&$. For each class of weights $\Phi$, the identity map $\id$ is clearly $\Phi$-open in the $\CQ$-ordered set $([0,1],d_L)$, but, $\id\&\id$ is not a fuzzy upper set of $([0,1],d_L)$.

(3) If $\CQ=(Q,\&)$ is a frame, i.e., $\&=\wedge$, then, as noted in Remark \ref{history}, the fuzzy ideals considered in \cite{Yao16} are exactly the flat ideals, hence a fuzzy set $\psi$ of a $\CQ$-ordered set $A$ is fuzzy Scott open if and only if it is so in the sense of Yao  \cite[Definition 2.10]{Yao16}. \end{rem}

\begin{rem}Let $\{x_i\}$ be a forward Cauchy net in  a $\CQ$-ordered set  $A$ and \[\phi=\bv_i\bw_{j\geq i}A(-,x_j).\] By Proposition \ref{3.9} (or, \textbf{Step 2} in the argument of Theorem \ref{FC is irreducible}), \[\sub_A(\phi,\psi)= \bv_i\bw_{j\geq i}\psi(x_j)\] for each fuzzy lower set $\psi$ of $A$. By Proposition \ref{yoneda limit as suprema}, Yoneda limits of $\{x_i\}$ are exactly the suprema of the fuzzy lower set \[\phi=\bv_i\bw_{j\geq i}A(-,x_j).\] So, a fuzzy lower set $\psi$ of $A$ is $\mathcal{W}$-closed if and only if \[\bv_i\bw_{j\geq i}\psi(x_j)\leq\psi(x) \] for every forward Cauchy net $\{x_i\}$  with a Yoneda limit $x$. This shows that $\mathcal{W}$-closed fuzzy sets are exactly the Scott closed fuzzy sets in the sense of Wagner (\cite{Wagner97}, Definition 4.4). \end{rem}

\begin{prop}Let $\Phi$ be a subclass of flat ideals. Then for each $\Phi$-cocontinuous map $f: A\lra B$ between $\CQ$-ordered sets, $f:(A,\sigma_\Phi(A))\lra (B,\sigma_\Phi(B))$ is continuous. \end{prop}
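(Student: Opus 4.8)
The plan is to unwind the definition of continuity for $\CQ$-topological spaces: $f$ is continuous precisely when the preimage $\psi\circ f$ of every open fuzzy set $\psi$ of $B$ is open in $A$. So I would fix $\psi\in\sigma_\Phi(B)$ and prove that $\psi\circ f\in\sigma_\Phi(A)$, which by definition amounts to showing that $\psi\circ f$ is a fuzzy upper set of $A$ and that $(\psi\circ f)(\sup\phi)\leq\phi\otimes(\psi\circ f)$ for each $\phi\in\Phi(A)$ whose supremum exists.

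First I would verify that $\psi\circ f$ is a fuzzy upper set of $A$. This is immediate from the hypotheses: since $f$ is $\CQ$-order preserving, $A(x,y)\leq B(f(x),f(y))$, and since $\psi$ is a fuzzy upper set of $B$, we have $B(f(x),f(y))\&\psi(f(x))\leq\psi(f(y))$; chaining the two inequalities yields $A(x,y)\&(\psi\circ f)(x)\leq(\psi\circ f)(y)$. In particular the tensor $\phi\otimes(\psi\circ f)$ is well defined.

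The crux of the argument is the reciprocity identity
\[f^\ra(\phi)\otimes\psi=\phi\otimes(\psi\circ f),\]
valid for every $\phi\in\Phi(A)$. To prove it I would expand $f^\ra(\phi)\otimes\psi=\bv_{y\in B}f^\ra(\phi)(y)\&\psi(y)$ using the formula $f^\ra(\phi)(y)=\bv_{x\in A}\phi(x)\&B(y,f(x))$, apply distributivity of $\&$ over joins to interchange the two joins, and then observe that for each fixed $x$ the inner join collapses: $\bv_{y\in B}B(y,f(x))\&\psi(y)=\psi(f(x))$, where ``$\leq$'' holds because $\psi$ is a fuzzy upper set and equality is attained at $y=f(x)$ since $B(f(x),f(x))=1$. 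What remains is $\bv_{x\in A}\phi(x)\&\psi(f(x))=\phi\otimes(\psi\circ f)$.

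With this identity established the conclusion is a matter of assembling the pieces. Assume $\sup\phi$ exists. Since $\Phi$ is a class of weights, $f^\ra(\phi)=\Phi(f)(\phi)\in\Phi(B)$; since $f$ is $\Phi$-cocontinuous, $f(\sup\phi)=\sup f^\ra(\phi)$ (so this supremum exists); and since $\psi$ is $\Phi$-open in $B$, we have $\psi(\sup f^\ra(\phi))\leq f^\ra(\phi)\otimes\psi$. Combining these with the reciprocity identity gives
\[(\psi\circ f)(\sup\phi)=\psi\big(f(\sup\phi)\big)=\psi\big(\sup f^\ra(\phi)\big)\leq f^\ra(\phi)\otimes\psi=\phi\otimes(\psi\circ f),\]
which is exactly the inequality required for $\psi\circ f$ to be $\Phi$-open in $A$. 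I expect the reciprocity identity to be the only step demanding genuine care; everything else is bookkeeping with the definitions. Note in particular that no meet continuity of $\CQ$, and no flatness beyond what already makes $\sigma_\Phi(A)$ a $\CQ$-topology via Proposition~\ref{open sets}, is invoked, so the argument works for any subclass $\Phi$ of flat ideals.
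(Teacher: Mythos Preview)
Your proof is correct. The paper states this proposition without proof, so there is nothing to compare against directly; your argument via the reciprocity identity $f^\ra(\phi)\otimes\psi=\phi\otimes(\psi\circ f)$ is exactly the natural one and mirrors the structure of the paper's proof of the dual Proposition~\ref{full functor} for the closed case (where the analogue is $\sub_B(f^\ra(\phi),\lam)=\sub_A(\phi,\lam\circ f)$ from Equation~(\ref{kan adjunction})).
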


Therefore, for a subclass $\Phi$ of flat ideals,  assigning each $\CQ$-ordered set $A$ to  the $\CQ$-topological space $(A,\sigma_\Phi(A))$ defines a functor $\Sigma_\Phi$ from the category of $\CQ$-ordered sets and $\Phi$-cocontinuous maps to that of stratified $\CQ$-topological spaces. It is known in domain theory that the functor sending each  ordered set to its Scott topology is a full functor, but, it is not clear whether $\Sigma_\Phi$ is a full functor.

The situation with Scott $\CQ$-cotopology looks more promising. The following conclusion implies that for every subclass $\Phi$ of irreducible ideals,  assigning each $\CQ$-ordered set $A$ to  the $\CQ$-cotopological space $(A,\sigma^{\rm co}_\Phi(A))$ gives a  full functor $\Sigma^{\rm co}_\Phi$ from the category of $\CQ$-ordered sets and $\Phi$-cocontinuous maps to that of stratified $\CQ$-cotopological spaces.

\begin{prop}\label{full functor} {\rm(\cite[Proposition 4.15]{Wagner97} for the class of forward Cauchy ideals)} Let $\Phi$ be a  class of   weights. For each map    $f: A\lra B $  between $\CQ$-ordered sets, the following are equivalent: \begin{enumerate}[(1)] \item $ f: A\lra B $ is $\Phi$-cocontinuous.
   \item For each $\Phi$-closed fuzzy set $\phi$ of $B$, $\phi\circ f$ is  a $\Phi$-closed fuzzy set of  $A$. \end{enumerate}\end{prop}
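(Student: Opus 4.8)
The plan is to prove both implications directly from three facts. First, by (\ref{Scott closed =}), a fuzzy lower set $\lam$ of a $\CQ$-ordered set $A$ is $\Phi$-closed if and only if $\sub_A(\phi,\lam)=\lam(\sup\phi)$ for every $\phi\in\Phi(A)$ admitting a supremum. Second, the Kan adjunction (\ref{kan adjunction}) supplies $\sub_B(f^\ra(\phi),\psi)=\sub_A(\phi,\psi\circ f)$ for all $\phi\in\CP A$ and $\psi\in\CP B$, valid once $f$ is $\CQ$-order preserving. Third, every representable fuzzy lower set $\y(b)=B(-,b)$ is $\Phi$-closed, because the identity $\sub_B(\phi,B(-,b))=B(\sup\phi,b)$ is precisely the defining property of $\sup\phi$; note this holds for any class of weights $\Phi$.

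First I would treat (1)$\Rightarrow$(2). Let $\lam$ be a $\Phi$-closed fuzzy set of $B$. Since $f$ is $\Phi$-cocontinuous it is in particular $\CQ$-order preserving, so $\lam\circ f=f^\la(\lam)$ is again a fuzzy lower set. For any $\phi\in\Phi(A)$ whose supremum exists, (\ref{kan adjunction}) gives $\sub_A(\phi,\lam\circ f)=\sub_B(f^\ra(\phi),\lam)$. Because $\Phi$ is a class of weights, $f^\ra(\phi)\in\Phi(B)$, and because $f$ is $\Phi$-cocontinuous, $\sup f^\ra(\phi)=f(\sup\phi)$ exists; hence $\Phi$-closedness of $\lam$ yields $\sub_B(f^\ra(\phi),\lam)=\lam(\sup f^\ra(\phi))=\lam(f(\sup\phi))=(\lam\circ f)(\sup\phi)$. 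Combining, $\sub_A(\phi,\lam\circ f)=(\lam\circ f)(\sup\phi)$, so $\lam\circ f$ is $\Phi$-closed.

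For (2)$\Rightarrow$(1), the \emph{first} task is to recover $\CQ$-order preservation of $f$, since (\ref{kan adjunction}) and the functor $f^\ra$ are only available for $\CQ$-order preserving maps. Applying (2) to the $\Phi$-closed set $B(-,b)$ shows that $x\mapsto B(f(x),b)$ is a fuzzy lower set of $A$ for every $b\in B$; specializing the fuzzy-lower-set inequality to $b=f(y)$ and using $B(f(y),f(y))=1$ gives $A(x,y)\leq B(f(x),f(y))$, so $f$ is $\CQ$-order preserving. Now fix $\phi\in\Phi(A)$ with $\sup\phi$ existing; I must verify that $f(\sup\phi)$ satisfies the defining property of the supremum of $f^\ra(\phi)$, namely $B(f(\sup\phi),b)=\sub_B(f^\ra(\phi),\y(b))$ for all $b\in B$. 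By (\ref{kan adjunction}), $\sub_B(f^\ra(\phi),B(-,b))=\sub_A(\phi,B(f(-),b))$; since $B(f(-),b)=B(-,b)\circ f$ is $\Phi$-closed by (2) and $\sup\phi$ exists, the right-hand side equals $B(f(-),b)(\sup\phi)=B(f(\sup\phi),b)$. This is exactly the required identity, so $f(\sup\phi)$ is a supremum of $f^\ra(\phi)$ and $f$ is $\Phi$-cocontinuous.

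The substantive idea lies entirely in direction (2)$\Rightarrow$(1): the whole argument rests on testing the hypothesis against the representable fuzzy lower sets $\y(b)$, which are automatically $\Phi$-closed, together with the observation that their $\Phi$-closedness after precomposition with $f$ encodes exactly the supremum equation one wants. Extracting $\CQ$-order preservation from the mere fact that $B(f(-),b)$ is a fuzzy lower set is a minor but necessary preliminary. Once these two points are in place, the remaining steps are routine applications of the adjunction and the definition of supremum.
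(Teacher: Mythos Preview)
Your proof is correct and follows essentially the same approach as the paper. The only cosmetic difference is in $(1)\Rightarrow(2)$: the paper dispatches it in one line by noting that a $\Phi$-closed set is the same as a $\Phi$-cocontinuous map into $(Q,d_R)$ and that composites of $\Phi$-cocontinuous maps are $\Phi$-cocontinuous, whereas you unfold this explicitly via the Kan adjunction; the $(2)\Rightarrow(1)$ direction is identical to the paper's, testing against the representable $\Phi$-closed sets $B(-,b)$ to first recover $\CQ$-order preservation and then the supremum identity.
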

\begin{proof}
$(1)\Rightarrow(2)$
This is easy since the composite of  $\Phi$-cocontinuous maps is $\Phi$-cocontinuous.

$(2)\Rightarrow(1)$ First, we show that $f$ preserves $\CQ$-order. For all $a_1,a_2\in A$, since  $\psi=B(-,f(a_2))$ is a $\Phi$-closed fuzzy set of $B$, then $\psi\circ f=B(f(-),f(a_2))$ is $\Phi$-closed, hence
\[A(a_1,a_2)   =\psi\circ f(a_2)\&A(a_1,a_2) \leq \psi\circ f(a_1)= B(f(a_1),f(a_2)), \] showing that $f$ preserves $\CQ$-order.

Second, we show that for each   $\phi\in\Phi(A)$, if $\sup\phi $ exists, then for all $b\in B$,
\[\sub_B(f^\ra(\phi),B(-,b))=B(f({\sup}\phi),b),\] hence $f(\sup\phi)$ is a supremum of $f^\ra(\phi)$.
Since $B(-,b)$ is a $\Phi$-closed fuzzy set of $B$,   $B(-,b)\circ f$ is a $\Phi$-closed fuzzy set of $A$, hence, by Eq. (\ref{Scott closed =}), \[\sub_B(f^\ra(\phi),B(-,b))=\sub_A(\phi,B(-,b)\circ f) = B(f({\sup}\phi),b).\]
This completes the proof.
\end{proof}

\begin{exmp}  \label{5.8}
Let $\CQ=([0,1],\&)$ with $\&$ being a left continuous t-norm. Then $\phi$   is a fuzzy Scott closed set in  $([0,1],d_R)$ if and only if  $\phi:([0,1],d_L)\lra([0,1],d_L)$ is right continuous and  $\CQ$-order preserving.

By  Corollary \ref{irreducible ideals in [0,1]}, a fuzzy lower set $\psi$  of $([0,1],d_R)$  is an irreducible ideal if and only if either  $\psi(x)=a \rightarrow x$ for some $a\in [0,1]$ or $\psi(x)=\bv_{b>a}(b\ra x)$ for some $ a<1$. Since the supremum of $ \bv_{b>a}(b\ra x)$ in $([0,1],d_R)$  is (see Example \ref{inclusion as sup}) \[\bw_{x\in [0,1]}\Big(\bv_{b>a}(b\ra x)\ra x\Big)=\bw_{b>a}\bw_{x\in [0,1]}((b\ra x)\ra x)= a,\] it follows that a fuzzy lower set $\phi$  of $([0,1],d_R)$ is fuzzy Scott closed  if and only if for all $a<1$, \[\sub_{[0,1]}\Big(\bv_{b>a}(b\ra x),\phi\Big)= \bw_{b>a}\phi(b)\leq\phi(a).\] The conclusion thus follows. \end{exmp}

If $\CQ$ is the unit interval $[0,1]$ equipped with a continuous t-norm  $\&$, we have a bit more: the fuzzy Scott $\CQ$-cotopology on each $\CQ$-ordered set is a strong $\CQ$-cotopology.

\begin{prop}\label{strong} Let $\CQ=([0,1],\&)$ with $\&$ being a left continuous t-norm. The following are equivalent: \begin{enumerate}[\rm (1)] \item $\&$ is a   continuous t-norm. \item The Scott $\CQ$-cotopology on each $\CQ$-ordered set is a strong $\CQ$-cotopology. \end{enumerate} \end{prop}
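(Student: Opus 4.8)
The plan is to reduce the equivalence to a single condition and then prove the two implications separately. First I would note that by Proposition \ref{closed sets}(2) the Scott $\CQ$-cotopology $\sigma^{\rm co}(A)$ is \emph{always} stratified (it is closed under $p\ra(-)$), so that ``strong'' is equivalent to ``co-stratified'', i.e. to the requirement that $p\&\lam$ be fuzzy Scott closed whenever $\lam$ is and $p\in[0,1]$. Since $\&$ is left continuous, Theorem \ref{G is FC} identifies irreducible ideals with forward Cauchy ideals, so combining this with Proposition \ref{3.9} and Proposition \ref{yoneda limit as suprema} a fuzzy lower set $\lam$ is fuzzy Scott closed exactly when $\bv_i\bw_{j\geq i}\lam(x_j)\leq\lam(x)$ for every forward Cauchy net $\{x_i\}$ with Yoneda limit $x$. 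As $p\&\lam$ is again a fuzzy lower set, the whole statement becomes a question about this net inequality for $p\&\lam$.

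For $(1)\Rightarrow(2)$ I would assume $\&$ continuous and $\lam$ Scott closed, and fix a forward Cauchy net $\{x_i\}$ with Yoneda limit $x$. Since $\lam$ is a fuzzy lower set, $\{\lam(x_j)\}$ is forward Cauchy in $(Q,d_R)$, so Lemma \ref{order convergence} gives $\bv_i\bw_{j\geq i}\lam(x_j)=\bw_i\bv_{j\geq i}\lam(x_j)$, and closedness of $\lam$ makes this common value equal to $\lam(x)$ (the reverse inequality $\lam(x)\leq\sub_A(\phi,\lam)$ always holds). Equality of the liminf and limsup of a net in the compact chain $[0,1]$ forces ordinary convergence $\lam(x_j)\to\lam(x)$, whence continuity of $p\&(-)$ gives $p\&\lam(x_j)\to p\&\lam(x)$ and therefore $\bv_i\bw_{j\geq i}(p\&\lam(x_j))=p\&\lam(x)$. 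Thus $p\&\lam$ satisfies the closedness inequality, so $\sigma^{\rm co}(A)$ is co-stratified, hence strong.

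For $(2)\Rightarrow(1)$ I would argue by contraposition. Each section $p\&(-)$ of a left continuous t-norm is monotone and left continuous; so if $\&$ is not continuous then, by the classical fact that a t-norm is continuous as soon as it is continuous in each variable, some section fails to be right continuous, yielding $p\in[0,1]$ and $a\in[0,1)$ with $\bw_{c>a}(p\&c)>p\&a$. I would then build a counterexample in $([0,1],d_R)$: the identity map $\id$ is fuzzy Scott closed there by Example \ref{5.8}. The decreasing sequence $x_n=a+1/n$ (for $n$ large) is forward Cauchy in $([0,1],d_R)$ with Yoneda limit $a$ by Lemma \ref{order convergence}, and since $\{p\&x_n\}$ decreases to $\bw_{c>a}(p\&c)$, a direct computation gives $\bv_i\bw_{j\geq i}(p\&x_j)=\bw_{c>a}(p\&c)>p\&a=(p\&\id)(a)$. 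Hence $p\&\id$ violates the closedness inequality, so it is not fuzzy Scott closed and $\sigma^{\rm co}([0,1],d_R)$ fails to be co-stratified, a fortiori not strong.

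Two points will need care. The first, in $(1)\Rightarrow(2)$, is the passage from the purely order-theoretic equality $\liminf=\limsup$ to genuine topological convergence of the net, which is what lets us invoke continuity of $p\&(-)$; this is elementary for nets in $[0,1]$ but should be stated explicitly. The more substantive obstacle is the step in $(2)\Rightarrow(1)$ that converts failure of joint continuity of $\&$ into failure of right continuity of a single section $p\&(-)$; this rests on the standard theorem that a t-norm is continuous if and only if it is continuous in each variable, which I would cite rather than reprove.
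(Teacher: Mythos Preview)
Your argument is correct. The contrapositive direction $(2)\Rightarrow(1)$ is essentially the paper's own proof, only made more explicit: the paper invokes \cite[Proposition 1.19]{KMP00} to obtain an $a$ with $a\&\id$ not right continuous and then quotes Example~\ref{5.8} directly, whereas you exhibit the witnessing forward Cauchy sequence $\{a+1/n\}$ in $([0,1],d_R)$ by hand.

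The genuine difference lies in $(1)\Rightarrow(2)$. The paper gives a compositional argument: a fuzzy Scott closed set $\phi$ of $A$ is the same thing as an $\CI$-cocontinuous map $A\to([0,1],d_R)$; when $\&$ is continuous, Example~\ref{5.8} shows that $a\&\id:([0,1],d_R)\to([0,1],d_R)$ is itself $\CI$-cocontinuous, and then $a\&\phi=(a\&\id)\circ\phi$ is Scott closed simply because $\CI$-cocontinuous maps compose. Your route is more analytic: you push a forward Cauchy net through $\lam$, use Lemma~\ref{order convergence} to conclude that $\{\lam(x_j)\}$ is a genuinely convergent net in $[0,1]$, and then invoke continuity of the section $p\&(-)$ on the level of real numbers. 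Both arguments are short; the paper's version has the advantage of isolating the single ingredient (right continuity of $a\&\id$, i.e.\ Example~\ref{5.8}) that drives both directions, while yours avoids the detour through Example~\ref{5.8} in the forward direction at the price of the small topological lemma that $\liminf=\limsup$ forces convergence in $[0,1]$.
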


\begin{proof}$(1)\Rightarrow(2)$ We only need to check that if $\phi$ is a fuzzy Scott closed fuzzy set of a $\CQ$-ordered set $A$, then so is $a\&\phi$ for all $a\in[0,1]$. Since $\phi$ is a fuzzy Scott closed set of $A$ and $a\&\id$ is a fuzzy Scott closed set of $([0,1],d_R)$, both $\phi:A\lra([0,1],d_R)$ and $a\&\id:([0,1],d_R)\lra([0,1],d_R)$ preserve suprema of irreducible ideals, then  $a\&\phi=(a\&\id)\circ\phi:A\lra([0,1],d_R)$ preserve suprema of irreducible ideals, hence $a\&\phi$ is fuzzy Scott closed.

$(2)\Rightarrow(1)$ If $\&$ is not continuous, by  \cite[Proposition 1.19]{KMP00} there is some $a\in[0,1]$ such that $a\&{\rm id}:[0,1]\lra[0,1]$ is not right continuous, hence not a fuzzy Scott closed set in $([0,1],d_R)$. Since the identity map  on $[0,1]$ is   fuzzy Scott closed in $([0,1],d_R)$ by Example \ref{5.8},   the Scott $\CQ$-cotopology on $([0,1],d_R)$ cannot be a strong one.
\end{proof}

\begin{exmp}This example shows that if $\CQ=([0,1],\&)$ with $\&$ being a continuous t-norm, then the Scott $\CQ$-cotopology on $([0,1],d_R)$ is the strong $\CQ$-cotopology on $[0,1]$ generated by the identity map.

Let $\tau$ denote the strong $\CQ$-cotopology  on $[0,1]$ generated by the identity map. By Example \ref{5.8}, a fuzzy Scott closed set in  $([0,1],d_R)$ is exactly a right continuous and  $\CQ$-order preserving map $\phi:([0,1],d_L)\lra([0,1],d_L)$. So, the conclusion has already been proved in \cite{Zhang18} in the case that $\&$ is  the t-norm $\min$, the product t-norm,  and the {\L}ukasiewicz t-norm. Here we prove it in the general case by help of the ordinal sum decomposition of continuous t-norms. Since the Scott $\CQ$-cotopology  on $([0,1],d_R)$ is  strong  and contains the identity map as a closed set, it suffices to show that if $\phi$ is a fuzzy Scott closed set in  $([0,1],d_R)$ then $\phi\in\tau$.  We do this in two steps.

\textbf{Step 1}. If $\phi$ is a fuzzy Scott closed set in  $([0,1],d_R)$ and  $\phi\geq \id$, then $\phi\in\tau$.

Since $\&$ is a continuous t-norm,  there is a set of disjoint open intervals $\{(a_i,b_i)\}$ such that \begin{itemize}\setlength{\itemsep}{-2pt} \item for each $i$, both $a_i$ and $b_i$ are idempotent and the restriction of $\&$ on $[a_i,b_i]$ is either isomorphic to the \L ukasiewicz t-norm or to the product t-norm; \item $x\&y=\min\{x,y\}$ if $(x,y)\notin\bigcup_i[a_i,b_i]^2$.  \end{itemize}

For each $x\in[0,1]$, define $g_x: [0,1]\lra[0,1]$ by \[ g_x(y)=\begin{cases}
\phi(x)\vee((\phi(x)\ra x)\ra y),&\text{$ (x,\phi(x))\in(a_i,b_i)^2 $  for some $i$ and $\phi(x)>x$,} \\
\phi(x)\vee(b_i\ra y),&\text{$ (x,\phi(x))\in(a_i,b_i)^2 $  for some $i$ and $ \phi(x)=x $,}\\
\phi(x)\vee(x\ra y),&\text{$ (x,\phi(x))\not\in(a_i,b_i)^2 $  for any $ i $.}
\end{cases} \]

Each $g_x$ is clearly a member of $\tau$, so,  in order to see that $\phi\in\tau$, it suffices to  show that  for all $y\in[0,1]$, \[\phi(y)=\bw_{x\in[0,1]}g_x(y).\] Before proving this equality, we list here some  facts about the maps $g_x$, the verifications are left to the reader. \begin{enumerate}[(M1)]
\item $\phi(y)\leq g_x(y)$ whenever $y\leq x$. \item If $ (x,\phi(x))\in(a_i,b_i)^2 $ for some $i$ and $\phi(x)>x$, then  $g_x(x)= (\phi(x)\ra x)\ra x=\phi(x)$  and $g_x(y)= (\phi(x)\ra x)\ra y\geq\phi(y)$ for all $y>x$.
\item  If $(x,\phi(x))\in(a_i,b_i)^2$ for some $i$ and $\phi(x)=x$,  then $\phi(y)=y=g_x(y)$ whenever $x\leq y<b_i$ and $g_x(y)=1\geq\phi(y)$ for all $y\geq b_i$.
\item  If $(x,\phi(x))\not\in(a_i,b_i)^2$  for any $i$, then for all $y\geq x$, $g_x(y)=\phi(x)\vee (x\ra y)=1 \geq \phi(y)$. \end{enumerate}

It follows immediately from these facts that  for all $y\in[0,1]$, $\phi(y)\leq\bw_{x\in[0,1]}g_x(y)$. For the converse inequality, we distinguish three cases.

\textbf{Case 1}.  $(y,\phi(y))\in(a_i,b_i)^2 $  for some $i$ and $ \phi(y)>y$. Then by fact (M2), $g_{y}(y)=\phi(y)$, hence $\phi(y)\geq\bw_{x\in[0,1]}g_x(y)$.

\textbf{Case 2}.   $(y,\phi(y))\in(a_i,b_i)^2$  for some $i$ and $\phi(y)=y$. Then by fact (M3), $g_{y}(y)=\phi(y)$, hence $\phi(y)\geq\bw_{x\in[0,1]}g_x(y)$.

\textbf{Case 3}.    $ (y,\phi(y))\not\in(a_i,b_i)^2 $  for any $i$. In this case, if we can show that $g_x(y)=\phi(x)$ for all $x>y$, then we will obtain that $\phi(y)=\bw_{x>y}\phi(x)\geq\bw_{x\in[0,1]}g_x(y)$ by right continuity of $\phi$. The proof is divided into four subcases.

Subcase 1.  $y \in (a_i,b_i) $ for some $i$ and $\phi(y)\geq b_i$. If $x\leq b_i $, then $x\ra y \leq  b_i$ and $\phi(x)\geq\phi(y)\geq b_i $, hence  $g_x(y)=\phi(x)\vee (x\ra y)=\phi(x)$. For $x>b_i$,  \begin{itemize}\setlength{\itemsep}{-2pt}
		\item if $ (x,\phi(x))\in(a_j,b_j)^2 $  for some $j$ and $ \phi(x)>x $, then \[g_x(y)=\phi(x)\vee((\phi(x)\ra x)\ra y)=\phi(x)\vee y =\phi(x);\]
		\item if $ (x,\phi(x))\in(a_j,b_j)^2 $  for some $j$ and $ \phi(x)=x $, then \[g_x(y)=\phi(x)\vee(b_j\ra y)=\phi(x)\vee y =\phi(x);\]
		\item if $(x,\phi(x))\not\in(a_j,b_j)^2$  for any $j$, then \[g_x(y)=\phi(x)\vee(x\ra y)=\phi(x)\vee y =\phi(x).\]
		\end{itemize}

Subcase 2.  $y \not\in [a_i,b_i]$ for any $i$. In this case, since $t\ra y= y$ for all $t>y$, it follows that $g_x(y)=\phi(x)\vee y=\phi(x)$.

Subcase 3.  $y=a_i$. For $x<b_i$, \begin{itemize}\setlength{\itemsep}{-2pt}
			\item if $ x<\phi(x)<b_i $, then $(x,\phi(x))\in(a_i,b_i)^2$, hence $g_x(y)=\phi(x)\vee((\phi(x)\ra x)\ra a_i)=\phi(x)$;
			\item if $ \phi(x)\geq b_i $, then $g_x(y)=\phi(x)\vee(x\ra y)=\phi(x)$ since $x\ra y=x\ra a_i \leq b_i$;
			\item if $ \phi(x)=x $, then $g_x(y)=\phi(x)\vee(b_i\ra y)=\phi(x)\vee(b_i\ra a_i)=\phi(x)$.
		\end{itemize}
For $x>b_i$,
		\begin{itemize}\setlength{\itemsep}{-2pt}
				\item if $(x,\phi(x))\in(a_j,b_j)^2$  for some $j$ and $\phi(x)>x$, then $a_i<a_j$, hence \[g_x(y)=\phi(x)\vee((\phi(x)\ra x)\ra a_i)=\phi(x)\vee a_i =\phi(x);\]
				\item if $(x,\phi(x))\in(a_j,b_j)^2$  for some $j$ and $\phi(x)=x$, then $g_x(y)=\phi(x)\vee(b_j\ra a_i)=\phi(x)$;
				\item if $(x,\phi(x))\not\in(a_j,b_j)^2$  for any $j$, then $g_x(y)=\phi(x)\vee(x\ra a_i)=\phi(x)$.
			\end{itemize}

Subcase 4.  $y=b_i$.  If $a_j=b_i$ for some $j$, then the conclusion holds by  Subcase 3.  Otherwise, the argument for Subcase 2 can be applied to show that $g_x(y)=\phi(x)$.

\textbf{Step 2}. If $\phi$ is a fuzzy Scott closed set in  $([0,1],d_R)$, then $\phi\in\tau$.

Since $\phi(1)\ra \phi$ is fuzzy Scott closed and $\id\leq \phi(1)\ra \phi$, it follows that $\phi(1)\ra \phi\in\tau$ by Step 1.  Since $\tau$ is strong and $\&$ is continuous, then $\phi=\phi(1)\&(\phi(1)\ra \phi) \in\tau$.  \end{exmp}

\end{document}